\numberwithin{equation}{section}
\theoremstyle{plain}
\newtheorem{theorem}[subsection]{Theorem}
\newtheorem{proposition}[subsection]{Proposition}
\newtheorem{lemma}[subsection]{Lemma}
\newtheorem{corollary}[subsection]{Corollary}
\theoremstyle{definition}
\newtheorem{definition}[subsection]{Definition}
\newcommand{\Lip}{\mathrm{Lip}}
\newcommand{\supp}{\textrm{supp }}
\newcommand{\E}{\mathbb{E}}
\newcommand{\Z}{\mathbb Z}
\newcommand{\back}{\backslash}
\newcommand{\gammaq}{\Gamma_0(q)}
\newcommand{\Hy}{\mathbb H}
\newcommand{\Q}{\mathbb Q}
\newcommand{\N}{\mathbb N}
\newcommand{\R}{\mathbb{R}}
\newcommand{\modu}{\textrm{ mod }}
\newcommand{\domainq}{\Gamma_0(q)\backslash \mathbb H}
\newcommand{\arccot}{\,\textrm{arccot}\,}
\begin{document}

\title{The horocycle flow at prime times}

\author{Peter Sarnak}
\address{Institute for Advanced Study\\
Einstein Road\\
     Princeton NJ 08540\\
     USA
}
\email{sarnak@math.princeton.edu}

\author{Adri\'an Ubis}
\address{Departamento de Matem\'aticas \\ Universidad Aut\'onoma de Madrid \\ Madrid 28049 \\ Spain
}
\email{adrian.ubis@uam.es}

\thanks{}

\subjclass{}

\begin{abstract}
We prove that the orbit of a non-periodic point at prime values of the horocycle flow in the modular surface is dense in a set of positive measure. For some special orbits we also prove that they are dense in the whole space---assuming the Ramanujan/Selberg conjectures for $GL_2/\Q$. In the process, we derive an effective version of Dani's Theorem for the (discrete) horocycle flow.
\end{abstract}

\maketitle

\section{Introduction}\label{introduction}

If $(X, T)$ is a dynamical system, for any $x\in X$ one can ask about the distribution of points $P_x =\{ T^p x: p \text{ prime} \}$ in the orbit $\theta_x=\{ T^n x : n\ge 1 \}$. For example if $X$ is finite then this is equivalent to Dirichlet's Theorem on primes in an arithmetic progression. If $(X, T)$ is ergodic, Bourgain \cite{bourgain} shows that for almost all $x$, $T^p x$ with $p$ prime, satisfies the Birkhoff ergodic Theorem and hence is equidistributed. If $(X, T)$ is  `chaotic', for example if it has a positive entropy then there may be many $x$'s for which $T^p x$ is poorly distributed in $\overline{\theta_x}$. For example if $T:[0,1]\to [0,1]$ is the doubling map $x\mapsto 2x$ then one can construct an explicit (in terms of its binary expansion) $\xi$ such that $\overline{\theta_{\xi}}=[0,1]$ but $T^p\xi\to 0$ as $p\to \infty$.

The setting in which one can hope for a regular behaviour on restricting to primes is that of unipotent orbits in a homogeneous space. Let $G$ be a connected Lie group, $\Gamma$ a lattice in $G$ and $u\in G$ an $Ad_G$ unipotent element, then Ratner's Theorem \cite{ratner} says that if $X=\Gamma\backslash G$ and $T: X\to X$ is given by
\begin{equation}\label{UnipotentFlowDef}
T(\Gamma g)= \Gamma g u,
\end{equation}
then $\overline{\theta_x}$, with $x=\Gamma g$, is homogeneous and the orbit $xu^n$, $n=1,2,\ldots$ is equidistributed in $\overline{\theta_x}$ w.r.t. an algebraic measure $d\mu_x$. In the case that $\mu_x$ is the normalized volume measure $d\mu_G$ on $X$ it is conjectured in \cite{gorodnik} that $\overline{P_x}=X$ and in fact that $xu^p$, $p=2,3,5,7,11,\ldots$ is equidistributed w.r.t. $d\mu_G$. Care should be taken in formulating this conjecture in the intermediate cases where $\overline{\theta_x}$ is not connected as there may be local congruence obstructions, but with the `obvious'  modifications this conjecture seems quite plausible. In intermediate cases where $\overline{\theta_x}$ is one of 
\begin{enumerate}
\item finite
\item a connected circle or more generally a torus
\item a connected nilmanifold $\Gamma \backslash N$,
\end{enumerate}
$\overline{P_x}$ and the behaviour of $xu^p$, $p=2,3,5,\ldots$ is understood. Case (i) requires no further comment while for (ii) it follows from Vinogradov's work that the points are equidistributed w.r.t  $dt$, the volume measure on the torus. The same is true for (iii)  as was shown recently by Green and Tao \cite{green-tao-equidistribution,green-tao-mobius}; in order to prove this, apart from using Vinogradov's methods they had to control sums of the type $\sum_n e(\alpha n [\beta n])$, which are similar to Weyl sums but behave in a more complex way.

Our purpose in this paper is to examine this problem in the basic case of $X=SL(2,\Z)\backslash SL(2,\R)$. According to Hedlund \cite{hedlund}, $\overline{\theta_x}$ is either finite, a closed horocycle of length $l$, $0<l<\infty$, or is all $X$. The first two cases correspond to (i) and (ii). In the last case we say that $x$ is \emph{generic}. By a theorem of Dani \cite{dani} the orbit $xu^n$, $n=1,2,3,\ldots$ is equidistributed in its closure w.r.t. one of the corresponding three types of algebraic measures. For $N\ge 1$ and $x\in X$ define the probability measure $\pi_{x,N}$ on $X$ by 
\begin{equation}\label{PrimeMeasure}
\pi_{x,N} := \frac{1}{\pi(N)} \sum_{p < N} \delta_{xu^p}
\end{equation}
where for $\xi\in X$, $\delta_{\xi}$ is the delta mass at $\xi$ and $\pi(N)$ is the number of primes less than $N$. We are interested in the weak limits $\nu_x$ of the $\pi_{x,N}$ as $N\to \infty$ (in the sense of integrating against continuous functions on the one-point compactification of $X$). If $x$ is generic then the conjecture is equivalent to saying that any such $\nu_x$ is $d\mu_G$. One can also allow $x$, the initial point of the orbit, to vary with $N$ in this analysis and in the measures in (\ref{PrimeMeasure}). Of special interest is the case $x=\Gamma g$
\[
g=H_N:=\begin{bmatrix}
 N^{-\frac 12 } & 0  \\
 0 & N^{\frac 12 }  \\
\end{bmatrix}
\hspace{10pt}
\mathrm{ and }
\hspace{10pt}
u=\begin{bmatrix}
 1 & 1  \\
 0 & 1  \\
\end{bmatrix},
\]
when $H_N u^j$, $0\le j\le N-1$ is a periodic orbit for $T$ of period $N$. These points are spread evenly on the unique closed horocycle in $X$ whose length is $N$. They also comprise a large piece of the Hecke points in $X$ corresponding to the Hecke correspondence of degree $N$
\[
C_N=\{ \frac{1}{\sqrt N } \begin{bmatrix}
a & b \\
c & d \\ 
\end{bmatrix}: ad=N, a, d>0,\, b \,\, \text{mod}\,  d \}.
\]

We can now state our main results. The first asserts that $\nu_x$ does not charge small sets with too much mass, that is $\nu_x$ is uniformly absolutely continuous with respect to $d\mu_G$. 

\begin{theorem}[Non-concentration at primes]\label{measure-control}
Let $x$ be generic and $\nu_x$ a weak limit of $\pi_{x,N}$, then
\[
d\nu_x \le 10\, d\mu_G.
\]
\end{theorem}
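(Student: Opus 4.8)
The plan is to prove that for every non‑negative $f\in C_c(X)$,
\[
\limsup_{N\to\infty}\ \frac1{\pi(N)}\sum_{p<N}f(xu^p)\ \le\ 10\,\mu_G(f).
\]
Since $\nu_x(f)\le\limsup_N\pi_{x,N}(f)$ and the order on Radon measures on $X$ is determined by testing against $C_c(X)^{+}$, this gives $d\nu_x\le 10\,d\mu_G$. Mollifying $f$ from above, it suffices to treat smooth $f$ with $0\le f\le 1$; the primes below $z$ number $o(\pi(N))$ for any $z=N^{o(1)}$ and may be discarded, so everything reduces to bounding $\sum_{z<p<N}f(xu^p)$.

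For this I would run Selberg's $\Lambda^2$ upper bound sieve on the non‑negative sequence $a_n:=f(xu^n)$. If $n$ is prime and $z<n<N$ then $n$ has no prime factor below $z$, so, with $\lambda_d$ the optimal Selberg weights (supported on squarefree $d\le z$, $\lambda_1=1$),
\[
\sum_{z<p<N}f(xu^p)\ \le\ \sum_{d_1,d_2\le z}\lambda_{d_1}\lambda_{d_2}\,S_{[d_1,d_2]},\qquad S_q:=\sum_{n<N,\ q\mid n}f(xu^n).
\]
The essential feature is that this is a purely \emph{Type I} estimate: only the local counts $S_q$ occur, and no bilinear/Type II input is needed — which is also why the method yields an inequality rather than the equality $\nu_x=\mu_G$.

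The local counts are supplied by an effective form of Dani's theorem. Writing $S_q=\sum_{m<N/q}f(x(u^q)^m)$ and using the conjugacy $u^q=a_qua_q^{-1}$ with $a_q=\begin{bmatrix}q^{1/2}&0\\0&q^{-1/2}\end{bmatrix}$, $S_q$ becomes a speed‑one discrete horocycle orbit of length $N/q$ based at $xa_q$, evaluated against $f\circ a_q^{-1}$. Establishing separately — via the renormalization of the horocycle by the geodesic flow and the unconditional effective equidistribution of closed horocycles — a bound of the shape
\[
S_q=\frac Nq\,\mu_G(f)+E_q,\qquad |E_q|\ \ll\ q^{A}\,\mathrm{ht}(x)^{B}\,\|f\|_{\mathrm{Sob}}\,(N/q)^{1-\kappa}\quad(A,B,\kappa>0),
\]
one feeds it into the sieve. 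The main term is $N\mu_G(f)\sum_{d_1,d_2}\lambda_{d_1}\lambda_{d_2}/[d_1,d_2]=(1+o(1))\,N\mu_G(f)/L(z)$, where $L(z)\sim\log z$ is the usual one‑dimensional sieve sum, while the accumulated error $\sum_{q\le z^2}3^{\omega(q)}|E_q|$ is $o(N\mu_G(f))$ provided $z^2\le N^{\eta_0}$ for a suitable $\eta_0=\eta_0(A,B,\kappa)>0$. Taking $z$ at the top of this range and dividing by $\pi(N)\sim N/\log N$ gives $\limsup_N\pi_{x,N}(f)\le(2/\eta_0)\,\mu_G(f)$, and the exponents one obtains make $2/\eta_0=10$ — i.e.\ a usable level of distribution of order $N^{1/5}$.

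The main obstacle is producing the effective Dani input with exponents strong enough to keep $2/\eta_0\le 10$. Two losses enter through the conjugation $u^q=a_qua_q^{-1}$: the test function $f\circ a_q^{-1}$ is stretched, so its Sobolev norm grows polynomially in $q$, and the base point $xa_q$ may sit a factor $\sim q$ deeper in the cusp. Worse, the discrete horocycle makes its own excursions into the cusp, and controlling these \emph{uniformly in $q$} — the point at which one is forced to pass to closed horocycles via the continued‑fraction algorithm — is the technical heart of the matter, and is what pins the constant at $10$ rather than $1$. (Because $\|f\|_{\mathrm{Sob}}\to\infty$ as $f$ approximates a shrinking ball, $\mu_G(f)$ cannot be sent to $0$ together with $N$, so this circle of ideas stops short of $\nu_x=\mu_G$ for an arbitrary generic $x$.)
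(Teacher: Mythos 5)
Your overall strategy---Selberg $\Lambda^2$ upper sieve on $a_n=f(xu^n)$, reduced to controlling Type~I sums $S_q$ via an effective Dani theorem, targeting a level of distribution near $N^{1/5}$---is exactly the paper's approach, and the constant $10$ arises the same way from $10 = 2/(1/5)$.

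The gap is in the effective Dani input you propose to feed the sieve. The bound $S_q=\tfrac Nq\mu_G(f)+E_q$ with $|E_q|\ll q^A\,\mathrm{ht}(x)^B\,\|f\|_{\mathrm{Sob}}\,(N/q)^{1-\kappa}$, uniformly over generic $x$, is not a theorem and cannot be one. The rate at which the horocycle orbit of a generic point equidistributes to $\mu_G$ depends on the diophantine type of the tangent point $\alpha$; for Liouville $\alpha$ there are arbitrarily large $N$ for which $\E_{n\le N}f(xu^n)$ lies near a closed-horocycle measure and far from $\mu_G(f)$, so no uniform power saving in $N$ is available. What the paper actually proves (Theorem~\ref{dani-discrete}, built on Sections~\ref{horocycle-approximation}--\ref{section-discrete-measures}) is that, for most $d\le N^{1/5}$, the sub-orbit average $\E_{n\le N,\,d\mid n}f(xu^n)$ is within $(\log N)^{-A}$ of $\int f\,d\mu$ where $\mu$ is an $N$-dependent \emph{algebraic} measure---possibly a closed-horocycle measure, not $\mu_G$---which is nevertheless \emph{the same} across the good $d$'s, since it is determined by the horocycle-approximation parameters $(\alpha_T,y_T,W_T)$ which do not involve $d$. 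The sieve then produces the effective inequality $\E_{p<T}f(xu^p)\le 10\,\E_{n<T}f(xu^n)+O((\log T)^{-c})$, and only at the very end does the \emph{qualitative} Dani theorem (Corollary~\ref{dani}) furnish $\E_{n<T}f(xu^n)\to\mu_G(f)$. Your sketch also omits two further ingredients that the proof of Theorem~\ref{prime-sum} requires: the separate case, handled with Siegel--Walfisz, in which the finite orbit is close to a periodic \emph{discrete} algebraic measure and the sieve cannot be applied as is; and the argument (the divisor-function contradiction in the second half of that proof) showing that the exceptional $d$'s in each dyadic block---those for which the sub-orbit's approximation data falls into the ``near a closed horocycle'' regime---number only $O(\delta D)$. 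Your comments about cusp excursions and the continued-fraction approximation correctly identify the obstruction, but the repair is to replace $\mu_G(f)$ in the target of the sieve inequality by $\E_{n<T}f(xu^n)$ and to prove $d$-independence of the approximating algebraic measure; your outline does not do this, and as stated the input it relies on is false.
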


\emph{Remark:} As we said before, probably what truly happens is that $d\nu_x=d\mu_G$.
If on the other
hand we allow $n$ to vary not over primes but 
over almost primes, then the quantitative equidistribution that we
develop to prove Theorem \ref{measure-control} can be used together
with a lower bound sieve (see \cite{friedlander-iwaniec}, Chapter 12) to prove the density
of the orbit. More precisely let $x$ be generic, then the points
$T^n x$, as $n$ varies over numbers with at most 10 prime
factors, are dense in $X$.
 
As a consequence of Theorem \ref{measure-control} we deduce that $\overline{P_x}$ has to be big.

\begin{corollary}[Large closure for primes]\label{PrimeOrbitBig}
Let $x$ be generic, then 
\[
\mathrm{Vol} (\overline{P_x}) \ge \frac {1}{10},
\]
and if $U\subset X$ is an open set with $\mathrm{Vol}(U)>1-1/10$ then $xu^{p}\in U$, for a positive density of primes $p$.
\end{corollary}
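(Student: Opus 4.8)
The plan is to deduce the Corollary from Theorem \ref{measure-control} by essentially soft measure-theoretic arguments, the only input beyond that theorem being a quantitative non-escape of mass for the discrete horocycle orbit at primes, which is part of the effective Dani theorem developed in the paper. First I would record two structural facts about an arbitrary weak limit $\nu_x$ of the $\pi_{x,N}$ on the one-point compactification $X\cup\{\infty\}$ (such limits exist along subsequences by weak-$\ast$ compactness). Since each $\pi_{x,N}$ is supported on $\{xu^p:\,p<N\text{ prime}\}\subseteq P_x\subseteq\overline{P_x}$, any nonnegative continuous function on $X\cup\{\infty\}$ whose support is a compact subset of $X$ disjoint from $\overline{P_x}$ integrates to $0$ against every $\pi_{x,N}$, hence against $\nu_x$; therefore $\nu_x(X\setminus\overline{P_x})=0$, i.e.\ the restriction of $\nu_x$ to $X$ is carried by $\overline{P_x}$. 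Secondly, the quantitative non-divergence estimates underlying Theorem \ref{measure-control} show that only a vanishing proportion of the primes $p<N$ send $xu^p$ arbitrarily deep into the cusp, uniformly in $N$; hence no mass escapes to $\infty$ and $\nu_x(X)=1$, so $\nu_x$ is an honest probability measure on $X$ supported on $\overline{P_x}$ with $d\nu_x\le 10\,d\mu_G$ by Theorem \ref{measure-control}.

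Granting this, the first assertion is immediate: $1=\nu_x(X)=\nu_x(\overline{P_x})\le 10\,\mu_G(\overline{P_x})=10\,\mathrm{Vol}(\overline{P_x})$, so $\mathrm{Vol}(\overline{P_x})\ge 1/10$. For the second assertion, let $U\subseteq X$ be open with $\mathrm{Vol}(U)>1-1/10$, so $\mathrm{Vol}(X\setminus U)<1/10$, and suppose for contradiction that $xu^p\in U$ for only a zero-density set of primes. Then there is a sequence $N_k\to\infty$ with $\pi_{x,N_k}(U)\to 0$, and after passing to a further subsequence we may assume $\pi_{x,N_k}\to\nu_x$ weakly for some limit as above. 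Since $U$ is open, the portmanteau inequality gives $\nu_x(U)\le\liminf_k\pi_{x,N_k}(U)=0$, hence $\nu_x(X\setminus U)=1$; but Theorem \ref{measure-control} forces $\nu_x(X\setminus U)\le 10\,\mathrm{Vol}(X\setminus U)<1$, a contradiction. Thus $xu^p\in U$ for a positive density of primes $p$.

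The only step here that is not purely formal is the non-escape of mass used in the first paragraph, and this is precisely where one needs the effective form of Dani's theorem rather than the bare conclusion $d\nu_x\le 10\,d\mu_G$: the latter inequality on $X$ alone would leave open the possibility of an atom of $\nu_x$ at the cusp (killing the lower bound on $\mathrm{Vol}(\overline{P_x})$ and breaking the contradiction in the second part). I expect this to be the main point requiring care; everything else is weak-convergence bookkeeping.
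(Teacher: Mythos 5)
Your proof is correct and is the natural deduction; the paper presents the Corollary as an immediate consequence of Theorem \ref{measure-control} and does not spell out a separate argument, so there is no alternative route to compare against. The non-escape-of-mass issue you flag is indeed the only non-trivial point, and it is already encoded in Theorem \ref{measure-control} once one reads it on the one-point compactification: the weak limits in the introduction are taken in $X^*=X\cup\{\infty\}$, and $d\nu_x\le 10\,d\mu_G$ is an inequality of measures on $X^*$ with $\mu_G(\{\infty\})=0$, which forces $\nu_x(\{\infty\})=0$. Concretely, the proof of Theorem \ref{measure-control} at the end of Section \ref{section-closure-primes} applies Theorem \ref{prime-sum} to nonnegative $f\in C(X^*)$ of finite Lipschitz norm; choosing $f$ to approximate the indicator of $X^*\setminus L$ for a compact exhaustion $L$ of $X$ makes $\int_X f\,d\mu_G$ arbitrarily small while $f(\infty)=1$, hence $\nu_x(\{\infty\})=0$. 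With this in hand your two formal steps, namely that $\nu_x$ is carried by $\overline{P_x}$ so $1=\nu_x(\overline{P_x})\le 10\,\mathrm{Vol}(\overline{P_x})$, and the portmanteau contradiction for the open set $U$, go through exactly as written.
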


In the case of `Hecke orbits' $P_{H_N}$, we can prove more;

\begin{theorem}[Prime Hecke orbits are dense]\label{hecke-control}
Let $\nu$ be a weak limit of the measures $\pi_{H_N,N}$. Assuming the Ramanujan/Selberg Conjectures concerning the automorphic spectrum of $GL_2/\mathbb Q$ (see for example the Appendix in \cite{sarnak-rankin}) we have
\[
\frac{1}{5} \, d\mu_G \le d\nu \le \frac 95 \, d\mu_G.
\]
\end{theorem}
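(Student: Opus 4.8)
The plan is to test the measures against a fixed $f\in C_c^\infty(X)$ with $f\ge0$ (the constant function is trivial, contributing its full mass $1$) and to bound
\[
I_N(f):=\frac{1}{\pi(N)}\sum_{p<N}f(H_Nu^p),
\]
showing $\tfrac15\int_X f\,d\mu_G\le\liminf_N I_N(f)$ and $\limsup_N I_N(f)\le\tfrac95\int_X f\,d\mu_G$; since $\nu$ is absolutely continuous with respect to $\mu_G$ (e.g.\ by the quantitative input behind Theorem~\ref{measure-control}), this gives the stated inequalities for $d\nu$. The orbit $\{H_Nu^j:0\le j<N\}$ is the closed horocycle of length $N$, so I parametrize $f$ along it by $F_N(x)=f(x+i/N\bmod\Gamma)$ on $\R/\Z$ and Fourier-expand, obtaining $I_N(f)=\sum_{n\in\Z}\widehat{F_N}(n)\,S_N(n)$ with $S_N(n)=\frac1{\pi(N)}\sum_{p<N}e(np/N)$. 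The term $n=0$ is $\widehat{F_N}(0)=\int_0^1 F_N(x)\,dx$, the normalized integral of $f$ over the closed horocycle, which tends to $\int_X f\,d\mu_G$ by the effective equidistribution of long closed horocycles on the modular surface established earlier in the paper (using $\lambda_1>1/4$ for $SL(2,\Z)$). This is the ``$1$'' about which the interval $[\tfrac15,\tfrac95]$ is centered, and everything reduces to controlling $E_N(f):=\sum_{n\ne0}\widehat{F_N}(n)S_N(n)$.

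I would run a circle-method/sieve dissection of $E_N(f)$ according to $q:=q(n)=N/\gcd(n,N)$, a divisor of $N$. For $q$ below a fixed power of $\log N$: Siegel--Walfisz gives $S_N(n)=\mu(q)/\phi(q)+O((\log N)^{-A})$, and the Poisson identity $\sum_{q(n)=q}\widehat{F_N}(n)=\sum_{d\mid q}\tfrac{\mu(d)}{ed}\sum_{k<ed}F_N(k/(ed))$ ($e=N/q$) expresses each inner sum, after the conjugacy turning a step of size $r$ of the flow into a closed horocycle of length $N/r$ pushed by $\log r$ units of geodesic flow (here $\log r$ bounded), in terms covered by effective Dani; one checks this whole range contributes $o(1)$, the $N^{-1/2}$ from the archimedean normalization of $\widehat{F_N}$ defeating the divisor count. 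For $q$ intermediate: Vinogradov's estimate $\sum_{p<N}e(\alpha p)\ll N^{4/5+\epsilon}$ gives a power saving in $S_N(n)$, which together with $\|\widehat{F_N}\|_{\ell^2}\ll\|f\|_{L^2(X)}+o(1)$ (horocycle equidistribution applied to $|f|^2$) and a large-sieve inequality disposes of it. The hard case --- comprising \emph{all} the remaining frequencies when $N$ is prime --- is $\gcd(n,N)=1$, where $S_N(n)=\frac1{\pi(N)}\sum_{p<N}e(np/N)$ is a prime exponential sum of denominator $N$, neither evaluable nor bounded by the classical machinery. Here one sandwiches $I_N(f)$ rather than estimating $E_N(f)$: an upper-bound sieve and a lower-bound sieve (linear sieve, respectively a Buchstab-type construction in the style of \cite{friedlander-iwaniec}, Ch.~12) are applied to $\sum_{n<N}f(H_Nu^n)$ with the primality weights, the Type~I remainders $\sum_{e\mid n,\,n<N}f(H_Nu^n)=\tfrac Ne\int_X f\,d\mu_G+O(\cdot)$ being genuine Hecke sums of degree $N$ (restricted to the residue $b\equiv0\bmod e$) which, \emph{assuming Ramanujan/Selberg}, equidistribute with a power-of-$N$ saving \emph{uniformly in $e\le N^{1-\delta}$} --- this is what gives the sieve its full level of distribution --- and the bilinear Type~II remainders $\sum_{m\sim M}\sum_{k\sim K}b_mc_k f(H_Nu^{mk})$, $MK\asymp N$, being handled by Cauchy--Schwarz followed by opening $f$ through its automorphic spectral expansion and invoking Ramanujan/Selberg for the Hecke eigenvalues and for twisted short sums of them. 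Carrying out the sieve numerology against $\tfrac{N}{\pi(N)}\sim\log N$, while using that the main term $\int_X f\,d\mu_G$ is separately in hand, yields the constants $\tfrac95$ and $\tfrac15$.

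The principal obstacle is exactly this Type~II (bilinear) estimate: extracting genuine cancellation from $\sum_{m\sim M}\sum_{k\sim K}b_mc_k f(H_Nu^{mk})$ uniformly over the dyadic ranges. It cannot be obtained from the circle method on $\R/\Z$ alone, since the frequencies in $F_N$ reach up to $\sim N$ and the prime exponential sums in play are of ``denominator $N$'' type; one must unfold $f$ along the automorphic spectrum, at which point the Ramanujan/Selberg conjectures --- governing both the size of Hecke eigenvalues and the cancellation in short eigenvalue-twisted sums --- become unavoidable. With the bilinear estimate in hand, reassembling the main term with the controlled Type~I and Type~II remainders, and running the parallel lower-bound sieve with the same automorphic input, produces $\tfrac15\,d\mu_G\le d\nu\le\tfrac95\,d\mu_G$; the behaviour of $f$ near the cusp (vacuous for $f\in C_c^\infty$, handled by the constant otherwise) is the only remaining, routine, point.
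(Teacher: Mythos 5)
Your high-level blueprint — Fourier-expand $f$ along the closed horocycle of period $N$, separate the main term from an error, and control the error by a sieve sandwich with Type~I and Type~II input obtained from the automorphic spectral decomposition — is recognisably the same strategy the paper runs, and the Type~II ``shifted-convolution / Ramanujan'' identification of the bottleneck is exactly right. But there are two concrete errors in the proposal that would cause it to fail as written.

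First, your Type~I claim is wrong: you assert that the progression sums
$\sum_{e\mid n,\,n<N} f(H_N u^n)$ equidistribute with a power saving \emph{uniformly in $e\le N^{1-\delta}$} under Ramanujan, and that this ``gives the sieve its full level of distribution.'' This is impossible. Along the closed horocycle at height $1/N$ the Fourier coefficients satisfy only $\widetilde f(m,1/N)\ll N^{-1/2+\epsilon}$ (this is the Ramanujan bound, cf.\ Proposition~\ref{period-bound-proposition} with $\theta=0$), and after Poisson summation over $n\le N/e$ the surviving frequencies are the $\asymp e$ multiples of $N/\gcd(N,e)$; for $e\mid N$, $e\asymp N^{\kappa}$, one gets $\asymp e\cdot N^{-1/2+\epsilon}=N^{\kappa-1/2+\epsilon}$, which is $o(1)$ only for $\kappa<1/2$. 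The true Type~I level is $1/2$, and Ramanujan does not improve it (the paper in fact reaches level $1/2$ unconditionally via \eqref{fourth-moment}, Proposition~\ref{hecke-type-i} and the discussion closing \S\ref{density-hecke-orbit}). This matters because the constants $1/5$ and $9/5$ are not the output of a ``full-level'' sieve; they come from plugging the specific levels $\alpha=1/2$ (Type~I) and $\gamma=1/5$ (Type~II, Proposition~\ref{hecke-type-ii} with $\theta=0$) into the Duke--Friedlander--Iwaniec asymptotic sieve (Lemma~\ref{duke-friedlander-iwaniec}), whose loss is $c(1/2,1/5)<4/5$. With a full level of distribution one would obtain a prime number theorem, not a factor of $4/5$; without the correct levels the numbers $1/5,\,9/5$ cannot be derived.

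Second, the Type~II condition in the DFI sieve is not the generic bilinear sum $\sum_{m\sim M}\sum_{k\sim K}b_mc_k\,f(H_Nu^{mk})$ you write down and propose to attack by Cauchy--Schwarz; it is a \emph{correlation} over a narrow dyadic range of primes,
$\E_{D<d_1<d_2<2D}\,\bigl|\E_{n}\,c_{d_1n}\overline{c_{d_2n}}\bigr|$,
and the paper's estimate is genuinely a shifted-convolution bound: one Fourier-expands both factors, isolates the diagonal $d_1m_1+d_2m_2=0$ (handled by multiplicativity of Hecke eigenvalues and Parseval), and treats the off-diagonal $d_1m_1+d_2m_2=k$ as a period of the product function $f_{d_1}f_{d_2}$ living on $\Gamma_0(d_1d_2)\backslash G$, with a level-aspect loss of $(d_1d_2)^{1/2+\epsilon}$ in the Sobolev norm that, against the gain $(ND)^{-1/2+\theta}$, is what forces $D<N^{(1-2\theta)/(5+2\theta)}$. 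Your proposal omits this mechanism, and the ``circle-method dissection by $q(n)=N/\gcd(n,N)$'' with a Vinogradov step does not produce the correlation structure the sieve needs: when $N$ is prime (as you note, the generic case) essentially all nonzero frequencies have $q(n)=N$, the Vinogradov bound degenerates there, and one is back to the sieve with no savings from that dissection. The route through Lemma~\ref{duke-friedlander-iwaniec}, Proposition~\ref{hecke-type-i} and Proposition~\ref{hecke-type-ii}, leading to Theorem~\ref{hecke-almost-equidistribution}, is therefore not a cosmetic difference from your sketch but the content of the argument.
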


Theorem \ref{hecke-control} has an application to a variant of Linnik's problem on projections of integral points on the level $1$ surface for quadratic forms in $4$-variables. Let $N\ge 1$ and denote by $M_N$ the set of $2\times 2$ matrices whose determinant equals $N$. Denote by $\pi$ the projection $A\mapsto \frac{1}{\sqrt N} A$ of $M_N(\R)$ onto $M_1(\R)$. Using their ergodic methods Linnik and Skubenko \cite{linnik-skubenko} show that the projection of the integer points $M_N(\Z)$ into $M_1(\R)$ become dense as $N\to \infty$. In quantitative form they show that for $U$ a (nice) compact subset of $M_1 (\R)$ 	
\begin{equation}\label{Linnik}
|\{ A\in M_N(\Z): \pi(A) \in U \} | \sim \sigma_1 (N)\mu (U)
\end{equation}
as $N\to \infty$, where $\mu(u)$ is the `Hardy-Littlewood' normalized Haar measure for $SL_2 (\R)$ on $M_1 (\R)$ and $\sigma_1(N)=\sum_{d\mid N} d$. \eqref{Linnik} can also be proved using Kloosterman's techniques in the circle method \cite{kloosterman} as well as using Hecke Correspondences in $GL_2$ as explained in \cite{sarnak-correspondences}. Using the last connection we establish the following Corollary  whose formulation is cleanest when $N$ is prime and which we assume is the case in the corollary. For
\[
A=\begin{bmatrix}
 a & b  \\
 c & d  \\
\end{bmatrix} \in M_N(\Z)
\]
we let $b_1(A)$ be the slope of the kernel of $A$ (which is a line) modulo $N$. That is if $N\mid a$ set $b_1(A)=\infty$ and otherwise $b_1(A)$ is the unique integer $0\le b_1<N$ satisfying $b_1\equiv \overline{a}b \mod N$ where $\overline a a \equiv 1 \mod N$. 
\begin{corollary}\label{HeckeDensity}
Assume the Ramanujan/Selberg Conjectures for $GL_2$. Let $U$ be a (nice) compact subset of $M_1(\R)$ and $\epsilon>0$. Then for $N$  prime sufficiently large
\[
\frac{1/5-\epsilon}{\log N} \le \frac{ |\{A\in M_N(\Z): \pi(A)\in U, \, b_1(A) \text{ is prime } \}|}{ |\{ A\in M_N(\Z): \pi(A)\in U \}| } \le \frac{9/5+\epsilon}{\log N}.
\]
In particular the projections of the points $A\in M_N(\Z)$ with $b_1(A)$ prime, become dense in $M_1(\R)$.
\end{corollary}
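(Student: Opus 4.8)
\emph{Proof plan.} The strategy is to read the count as a statement about the discrete horocycle flow $T$ on $X=SL(2,\Z)\backslash SL(2,\R)$ and then apply Theorem \ref{hecke-control}. Via the Hecke correspondence of \cite{sarnak-correspondences}, write each $A\in M_N(\Z)$ in its (unique) Hermite normal form $A=\gamma\begin{bmatrix}a&b\\0&d\end{bmatrix}$ with $\gamma\in SL(2,\Z)$, $ad=N$, $a,d>0$, $0\le b<d$; then $\pi(A)=\gamma\cdot\tfrac1{\sqrt N}\begin{bmatrix}a&b\\0&d\end{bmatrix}$ ranges over the $\Gamma$-orbits of the $\sigma_1(N)$ Hecke points $C_N$, each orbit hit once. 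Hence for bounded $\psi$ on $SL(2,\R)$,
\[
\sum_{A\in M_N(\Z)}\psi(\pi(A))=\sum_{h\in C_N}\Phi_\psi(\Gamma h),\qquad \Phi_\psi(\Gamma g):=\sum_{\gamma\in SL(2,\Z)}\psi(\gamma g),
\]
and $\Phi_\psi$ is bounded and compactly supported on $X$ when $\psi=\mathbf 1_U$ with $U$ compact. Smoothing $\mathbf 1_U$ from above and below (legitimate since $U$ is nice) and invoking the classical, unconditional equidistribution of the $C_N$ recovers $|\{A\in M_N(\Z):\pi(A)\in U\}|\sim\sigma_1(N)\mu(U)$, i.e. \eqref{Linnik}; this is the denominator.

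For the numerator I would first recognise ``$b_1(A)$ prime'' in these coordinates. A direct computation with $\gamma\begin{bmatrix}a&b\\0&d\end{bmatrix}$ shows $b_1(A)\equiv\bar a\,b\pmod{d/g}$ with $g=\big((\gamma)_{11},d\big)$; in particular on the principal family $a=1$ (where $\tfrac1{\sqrt N}\begin{bmatrix}1&b\\0&N\end{bmatrix}=H_Nu^b$) and for $\gamma$ with $((\gamma)_{11},N)=1$ one has simply $b_1(A)=b$. Since $\Gamma H_Nu^b=T^b(\Gamma H_N)$, the principal Hecke family is exactly the periodic orbit $\{T^bx_0:0\le b<N\}$ of $x_0=\Gamma H_N$, and ``$b_1(A)$ prime'' cuts it down to $\{T^px_0:p<N\text{ prime}\}$ --- the configuration measured by $\pi_{H_N,N}$. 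The other Hecke families $a>1$ are, analogously, the shorter closed horocycles of length $N/a^2$ (densely sampled $d$-periodic orbits of $T$ through $\Gamma H_{d/a}$), and on each of them ``$b_1(A)$ prime'' again amounts, up to an invertible affine reparametrisation of the parameter modulo $d$, to restricting to prime times.

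The principal-family contribution to the numerator is thus $\sum_{p<N,\ p\text{ prime}}\big|\{\gamma:((\gamma)_{11},N)=1,\ \gamma H_Nu^p\in U\}\big|$. Using $\{\gamma\in SL(2,\Z):e\mid(\gamma)_{11}\}=w\,\Gamma_0(e)$ with $w=\begin{bmatrix}0&-1\\1&0\end{bmatrix}$ and M\"obius inversion over $e\mid N$ to open the coprimality condition, this splits into sums $\sum_{p<N}\Phi^{(e)}_U(\Gamma_0(e)H_Nu^p)$ over the closed horocycle of length $N$ at prime times inside $\Gamma_0(e)\backslash SL(2,\R)$ (and compactness of $U$ forces $(\gamma)_{11}=O(\sqrt N)$, so only $e\ll\sqrt N$ occur, for which length $N$ dwarfs the level). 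For $e=1$ this is $\pi(N)\int_X\Phi_U\,d\nu$ with $\nu$ a weak limit of $\pi_{H_N,N}$, and Theorem \ref{hecke-control} gives $\int_X\Phi_U\,d\nu\in[\tfrac15,\tfrac95]\int_X\Phi_U\,d\mu_G=[\tfrac15,\tfrac95]\mu(U)$, which with $\pi(N)\sim N/\log N$ produces the factor $[\tfrac15,\tfrac95]/\log N$. It remains to treat (i) the families with $a>N^\delta$, which are $o(\sigma_1(N)/\log N)$ by the trivial bound and may be discarded, while the families with $a\le N^\delta$ re-sum the divisor weight $\sum_{a\mid N}N/a$ back up to $\sigma_1(N)$ --- matching the denominator; and (ii) the $e>1$ terms (and the analogous $g>1$ pieces inside each family), which must reassemble, after the M\"obius inversion, into the correct Euler-product density, thereby only converting $\sigma_1(N)$ into the normalisation of the main term. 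Dividing and absorbing the smoothing loss and $o(1)$'s into $\epsilon$ yields the stated inequalities, and in particular the density of the projections with $b_1(A)$ prime.

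The hard part is step (ii): since the M\"obius sum over $e$ is not of one sign, one cannot afford to control the $e>1$ horocycle-at-prime-times sums by a merely one-sided non-concentration bound. The remedy I would pursue --- and which is precisely what the abstract's ``effective version of Dani's Theorem for the discrete horocycle flow'' is for --- is to establish the required equidistribution on $\Gamma_0(q)\backslash SL(2,\R)$ (the effective Dani theorem for the normalisation, and, under Ramanujan/Selberg, the sharper $[\tfrac15,\tfrac95]$ bound of Theorem \ref{hecke-control}) with all constants uniform in the level $q$; equivalently, to re-run the proof of Theorem \ref{hecke-control} with the arithmetic weight $\mathbf 1[((\gamma)_{11},N)=1]$ built in from the start, so that the Euler-product density appears on both sides of the inequality at once and the constants $1/5$ and $9/5$ survive. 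Everything else is bookkeeping: making the family-by-family and cusp-by-cusp estimates uniform enough to be summed over the $N^{o(1)}$ divisors of $N$.
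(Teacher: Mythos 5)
The paper does not actually write out a proof of Corollary~\ref{HeckeDensity}: after proving Theorem~\ref{hecke-control} it simply asserts the corollary and points at the Hecke-correspondence machinery of \cite{sarnak-correspondences}. So there is no in-text argument to compare you against; what I can do is assess the soundness of the route you propose.

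Your opening reductions are correct and are surely what the authors have in mind: the Hermite/Hecke parametrisation $A=\gamma\bigl[\begin{smallmatrix}a&b\\0&d\end{smallmatrix}\bigr]$ gives $\sum_A\psi(\pi(A))=\sum_{h\in C_N}\Phi_\psi(\Gamma h)$, the principal family $a=1$ with $(\gamma_{11},N)=1$ realises the periodic orbit $\{\Gamma H_Nu^b\}$ with $b_1(A)=b$, and Theorem~\ref{hecke-control} is exactly the input for that piece. Identifying the secondary families $a>1$ with shorter closed horocycles sampled at rational step and noticing that the $b_1$-prime condition becomes an affine reparametrisation of prime times is also right (at least for $N$ squarefree, where $\overline a\bmod d_1$ exists; the paper's definition of $b_1$ is already problematic when $p^k\|a$, $p^m\|N$ with $0<k<m$, and you should flag the squarefree reduction or the convention that repairs it).

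The genuine gap is the one you yourself flag in your final paragraph, and I don't think it can be waved away as ``bookkeeping''. The M\"obius sum over $e\mid N$ opening $(\gamma_{11},N)=1$ is not one-signed, and the individual $e>1$ terms are prime-orbit averages on $\Gamma_0(e)\backslash G$ over levels $e$ as large as $\sqrt N$. Theorem~\ref{hecke-control} is proved only on $SL_2(\Z)\backslash G$. The paper's footnote in Section~\ref{effective-equid-non-discrete} is explicit that Sections 4--6 take place entirely on $X=SL_2(\Z)\backslash G$, and in Section~\ref{density-hecke-orbit} the level $q$ that appears is $q=d_1d_2$ coming from the shifted convolution, not the ambient lattice of the orbit. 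Nothing in the paper gives the $[\tfrac15,\tfrac95]$-type bound for prime horocycle orbits in $\Gamma_0(e)\backslash G$ uniformly in $e\ll\sqrt N$, nor the version with the coprimality weight $\mathbf 1[(\gamma_{11},N)=1]$ built into the Type I/Type II analysis. Asserting ``the Euler-product density appears on both sides at once'' is a plausible heuristic, not a proof; the discrepancy between $\sigma_1(N)$ in the denominator and $\prod_{p\mid N}\frac p{p+1}$ from the coprimality cut is of size $\prod_{p\mid N}\bigl(\tfrac p{p+1}\bigr)^2$, which is not $1+o(1)$, so the secondary families and the $e>1$ M\"obius terms must contribute a matching main term, and that needs to be computed, not deduced from non-negativity.

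A secondary, but real, issue: $\Phi_{\mathbf 1_U}(\Gamma g)=|\{\gamma:\gamma g\in U\}|$ is \emph{not} a bounded function on $X$ for $U$ a fixed compact subset of $G$ --- it blows up like the reciprocal of the injectivity radius as $\Gamma g$ climbs the cusp. Theorem~\ref{hecke-control} is formulated for bounded (Lipschitz) test functions, so one needs to truncate $\Phi_{\mathbf 1_U}$ in the cusp and bound the error using, say, the non-escape of mass for $\pi_{H_N,N}$ that is implicit in Proposition~\ref{period-bound-proposition}. This is standard but has to be said, and your write-up passes over it.

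In short: the overall strategy is the intended one and the reduction of the $a=1$, coprime piece to Theorem~\ref{hecke-control} is correct, but the treatment of the M\"obius/Euler-product structure and the level-uniform (or weight-modified) version of Theorem~\ref{hecke-control} that it requires are not supplied, and the cusp truncation of $\Phi_{\mathbf 1_U}$ is not addressed. As it stands this is a credible plan rather than a proof.
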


We end the introduction with an outline of the contents of the sections and of the techniques that we use. The proofs of Theorems \ref{measure-control} and \ref{hecke-control} use of sieve methods. These reduce sums over primes $\sum_{p\le N} f(xu^p)$, to the study of linear sums over progressions $\sum_{n\le N/d} f(xu^{nd})$, and bilinear sums $\sum_{n\le \min(N/d_1, N/d_2 )} f_1(xu^{nd_1})f_2(xu^{nd_2})$. A critical point in the analysis is to allow $d$ to be as large as possible, this is measured by the level of distribution $\alpha$; $d\le N^{\alpha}$ (respectively $\max(d_1, d_2)\le N^{\alpha}$ ).  The first type of sums are connected with equidistribution in $(X,T)$ and the second type with joinings of $(X,T^{d_1})$ with $(X,T^{d_2})$.

The  effective rate of equidistribution of long pieces of unipotent orbits has been studied in the case of general compact quotients $\Gamma\backslash SL(2,\R)$.  For continuous such orbits this is due to Burger \cite{burger} while for discrete ones to Venkatesh \cite{venkatesh}. Both make use of the spectral gap in the decomposition of $SL(2,\R)$ acting by translations on $L^2 (\Gamma \backslash SL(2,\R))$. One can quantify Venkatesh's method to obtain a positive level of distribution for the linear sums and then follow the analysis in the proof of Theorem \ref{measure-control} to obtain the analogues of Theorem \ref{measure-control} and Corollary \ref{PrimeOrbitBig} for such $\Gamma\,$'s (the constant $10$ is replaced by a number depending on the spectral gap).

For $\Gamma\backslash SL(2,\R)$ noncompact but of finite volume, due to the existence of periodic orbits of the horocycle flow one cannot formulate a simple uniform rate of equidistribution in Dani's Theorem. In a preprint \cite{strombergsson-integral} A. Str\"ombergsson gives an effective version of Dani's Theorem for continuous orbits in terms of the excursion rate of geodesics; he uses Burger's approach.

We only learned of \cite{strombergsson-integral} after completing our formulation and treatment of an effective Dani theorem for the continuous flows, see Theorems \ref{implicit-dani-continuous} and \ref{dani-continuous} in section \ref{effective-equid-non-discrete}. One of the main results of this paper is an effective Dani Theorem for discrete unipotent orbits of $(X,T^s)$, see Theorems \ref{dani-discrete} and \ref{effective-equid}. These give a quantitative equidistribution w.r.t. algebraic measures of long pieces of such orbits and allowing $s$ to be large. This discrete case is quite a bit more complex both in its formulation and its proof. It requires a series of basic Lemmas (see section \ref{horocycle-approximation}) which use the action of $SL_2(\Z)$ to give quantitative approximations of pieces of horocycle orbits by periodic horocycles, much like the approximation of reals by rationals in the theory of diophantine approximation. Critical to this analysis are various parameters associated with a given piece of horocycle orbit. The resulting approximations allow us to approach the level of distribution sums by taking $f, f_1, f_2$ to be automorphic forms and expanding them in Fourier series in the cusp. The burden of the analysis is in this way thrown onto the Fourier coefficients of these forms. This leads us to Theorem \ref{dani-discrete} which gives a suitable level of distribution in the linear sums and with which we can apply an upper bound sieve (Brun, Selberg) and deduce Theorem \ref{measure-control} and Corollary \ref{PrimeOrbitBig}.

Theorem \ref{hecke-control} involves a lower bound sieve and in particular a level of distribution for bilinear sums. This is naturally connected with effective equidistribution of $1$-parameter unipotent orbits in $(SL(2,\Z)\backslash SL(2,\R)) \times (SL(2,\Z)\backslash SL(2,\R))$ which is a well known open problem since Ratner's paper \cite{ratner-joinings}. For the special Hecke points that are taken in Theorem \ref{hecke-control} our Fourier expansion approach converts the bilinear sums into sums of products of shifted coefficients of these automorphic forms (``Shifted convolution''). In Section \ref{automorphic-forms} we review the spectral approach to this well studied problem; in particular we use the recent treatments in \cite{blomer-harcos,blomer-harcos-adeles} which are both convenient for our application and also allow for a critical improvement over \cite{sarnak-rankin} in the level aspect. Proposition \ref{period-bound-proposition} gives a slight improvement over \cite{blomer-harcos-adeles} and also \cite{pi}, in this $q$ aspect, and it is optimal under the Ramanujan/Selberg Conjectures. Concerning the linear sums for these Hecke orbits we establish a level of $1/2$ (see the discussion at the end of section \ref{density-hecke-orbit}). This is the optimal level that can be proved by automorphic form/spectral methods. To analyze the sum over primes we use the sieve developed by Duke-Friedlander-Iwaniec \cite{duke-friedlander-iwaniec}. For an asymptotics for the sum over primes (i. e. a ``prime number theorem'') they require a level of distribution of $1/3$ for the bilinear sums, given the level of $1/2$ that we have for the linear sums. Using the best bounds towards Ramanujan/Selberg Conjecture for $GL_2/\mathbb Q$ we establish a level of $\alpha=3/19$ for these bilinear sums. This falls short of the $1/3$ mark as well as the $1/5$ mark which is needed to get a lower bound in the sum over primes. However assuming the Ramanujan/Selberg Conjecture this does give a strong enough level of distribution to deduce Theorem \ref{hecke-control}.

\section{Horocycle Approximation}\label{horocycle-approximation}

The group $G=SL(2,\R)$ can be parametrized through its Lie algebra. The Lie algebra $\mathfrak g$ are the $2\times 2$ real matrices of zero trace. In this way, the so-called Iwasawa parametrization $g=h(x)a(y)k(\theta)$ with $x,y,\theta\in \R$ and
\[
h(x)=\begin{bmatrix}
 1 & x  \\
 0 & 1  \\
\end{bmatrix}, \hspace{20pt}
a(y)=\begin{bmatrix}
 y^{\frac 12} & 0  \\
 0 & y^{-\frac 12}  \\
\end{bmatrix}, \hspace{20pt}
k(\theta)=\begin{bmatrix}
 \cos \theta & \sin \theta  \\
 -\sin \theta & \cos \theta  \\
\end{bmatrix}.
\]
corresponds to the Lie algebra basis
\begin{equation}\label{LieBasis}
R=\begin{bmatrix}
0 & 1 \\
0 & 0
\end{bmatrix},
\hspace{20pt}
H=\begin{bmatrix}
1 & 0 \\
0 & -1
\end{bmatrix},
\hspace{20pt}
V=\begin{bmatrix}
0 & 1 \\
-1 & 0
\end{bmatrix}
\end{equation}
in the sense that $h(x)=\exp(xR)$, $a(e^{2u})=\exp(uH)$ and $k(\theta)=\exp(\theta V)$.
Moreover it is unique when restricting $\theta$ to $[-\pi,\pi)$.

We can explicitly define a left $G$-invariant metric on $G$ as
\[
d_G(g,h)=\inf\left\{\sum_{i=0}^{n-1} \psi(x_i, x_{i+1}) : x_0,\ldots , x_n\in G; x_0=g; x_n=h \right\}
\]
with $\psi(x,y)=\min(\|x^{-1}y-I\|,\|y^{-1}x-I\|)$ and $\|\cdot \|$ any norm. Let us fix the norm 
\[
\left\|
\begin{bmatrix}
a & b \\
c & d 
\end{bmatrix}
\right\|=\sqrt{2a^2+(b+c)^2+4c^2+2d^2}.
\]
for concreteness. Then, we can describe the metric in terms of the Iwasawa parametrization as
\[
d_G s^2= \frac{dx^2+dy^2}{y^2}+d\theta^2
\]
We can also write any Haar measure in $G$ as a multiple of
\[
d\mu_G= \frac{dx }{y} \, \frac{dy}{y} \, \frac{d\theta}{\pi}.
\]
Since $G$ is unimodular, this measure is both left and right $G$-invariant.

By sending $(x+iy,\theta)$ to $h(x)a(y)k(\theta/2)$ we see that $\{\pm I\}\backslash G$ endowed with this metric is isometric to the unit tangent bundle $T_1 \mathbb{H}$ of the Poincar\'e upper half-plane $\mathbb{H}$ with the metric $ds^2=y^{-2}(dx^2+dy^2)$. We shall use both notations to refer to an element of $\{\pm I\}\backslash G$, and we shall even use $x+iy$ as a shorthand for $(x+iy,0)$. In this way, we can express multiplication in $\{\pm I\}\backslash G$ as
\begin{equation}\label{gammaMultiplication}
\begin{bmatrix}
 a & b  \\
 c & d  \\
\end{bmatrix}
(z,\theta)=\left(\frac{az+b}{cz+d},\theta-2\arg (cz+d)\right).
\end{equation}
Now, we define the discrete \emph{horocycle flow} at distance $s$ as the transformation $g\mapsto gh(s)$. The name comes from the fact that a horocycle is a circle in $\mathbb{H}$ tangent to $\partial\mathbb{H}$, and the horocycle flow sends a point with tangent vector pointing towards the center of the horocycle $S$ to the unique point at distance $s$ ``to the right'' whose tangent vector also points to the center of $S$. In terms of our parametrization, we can write 
\begin{equation}\label{ActionHorocycle}
gh(t+\cot \theta)=h(\alpha-\frac{R t}{t^2+1})a( \frac{R}{t^2+1})k(-\arccot t)
\end{equation}
for $g=h(x)a(y)k(\theta)$, where 
\[
R=y(\sin\theta)^{-2}
\]
is the diameter of the horocycle and 
\[
\alpha=x-yW
\]
its point of tangency with $\partial \mathbb{H}$, where $W=\cot \theta$ (see Figure \ref{horocycle-flow}).

 
 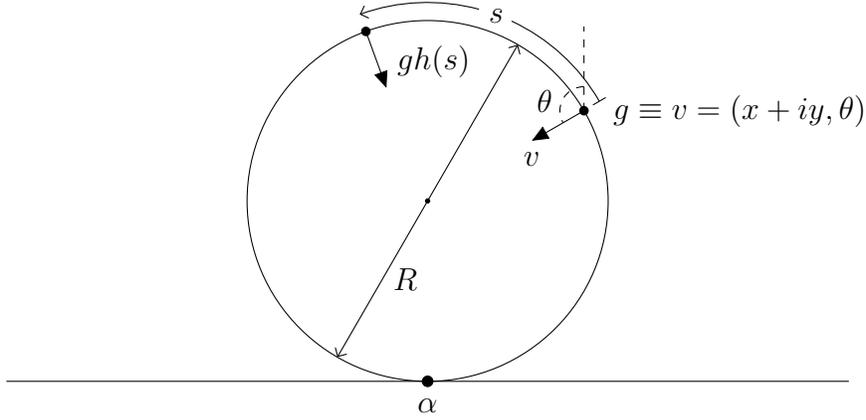
\begin{figure}
   \caption{The horocycle flow}
   \label{horocycle-flow}
   \centering
 
 \ 
  
 \begin{tikzpicture}[scale=0.8]
 
%
%
 
 \draw (-7,0) -- (7,0);
 
 \draw (0,3) circle (3);
 
 \filldraw (0,0) circle (2.5pt) node[below=2.5pt] {$\alpha$};
 
 \filldraw (0,3) circle (1pt);
 
 \draw [-angle 90] (0,3) -- node[right] {$R$} +(-120:3);
 
 \draw [-angle 90] (0,3) --  +(60:3);

 \draw (0,3) ++(30:3)  [-triangle 45] -- ++(210:1) node[below] {$v$};
 
 \filldraw (0,3) ++(30:3)  circle (2pt) node[right=7pt] {$g\equiv v =(x+iy,\theta)$};
 
 \draw[dashed] (0,3) ++(30:3) -- ++(90:1.4);
 
 \draw[dashed] (0,3) ++(30:3) ++(90:0.4) arc (90:210:0.4) node[above left] {$\theta$};
 
 \draw (0,3) ++(110:3)  [-triangle 45] -- ++(-70:1) node[above right] {$gh(s)$};
 
 \filldraw (0,3) ++(110:3)  circle (2pt);

 
\draw (1.14,6.08) node {$s$};

 \draw [|-] (0,3) ++(30:3.3)  arc  (30:65:3.3);
 \draw [angle 90-] (0,3) ++(110:3.3)  arc  (110:75:3.3);
 
 \end{tikzpicture}
 
 \end{figure}

Now, we consider the homogeneous space $X=\Gamma\backslash G$, with the metric induced from the one in $G$, namely $d_X(\Gamma g,\Gamma h)=\min_{\gamma\in\Gamma} d_G(\gamma g,h)$. 
Then, we have that $X$ is isometric to $T_1(\Gamma\backslash\mathbb{H})$.  So, considering the identification $g=(z,\theta)$, we can set
\[
D_X=\{(z,\theta): |z|\ge 1, \, -1/2\le \Re z \le 1/2, -\pi \le \theta \le \pi\}
\]
as a fundamental domain for $X$.

We also have that the horocycle flow in $G$ descends to $X$, and when doing so its behaviour becomes more complex. As we noted in the introduction, Dani proved that for any $\xi\in X$ the orbit generated by the horocycle flow, $\{\xi h(s)^n:n=0,1,2,\ldots\}$, is dense in either
\begin{enumerate}
\item a discrete periodic subset of a closed horocycle
\item a closed horocycle
\item the whole space, 
\end{enumerate}
and we can explicitly state which possibility happens in terms of $\xi=\Gamma g$: (i) for $\alpha$ and $sR^{-1}$ rational numbers, (ii) for $\alpha$ rational and $sR^{-1}$ irrational and (iii) for $\alpha$ irrational. Moreover, in each case the orbit becomes equidistributed in its closure w.r.t. the algebraic probability measure supported there.

If one considers the continuous version of the horocycle flow, $\{\xi h(t): t\in \R_{\ge 0}\}$, only the possibilities (ii) and (iii) can occur, depending just on the rationality of $\alpha$.

Our purpose in sections \ref{effective-equid-non-discrete} and \ref{section-discrete-measures} is to analyze which of the possibilities is the ``nearest'' for a finite orbit
\[
\{\xi h(s)^n: 0\le sn \le T \};
\]
or $\{\xi h(t): t\in [0,T]\}$ in the continuous case, in terms of the parameters $\alpha$, $sR^{-1}$ and $T$. In preparation for that, we define some quantities associated to the piece of horocycle $\{ \xi h(t): t\in [0,T]\}$, which will be useful for applying spectral theory, and moreover allow us to decide between (i), (ii) and (iii) for $\xi$. 
Let $Y_T(g)$ the ``Euclidean distance'' from the piece of horocyle $P_{g,T} =\{gh(t): t\in [0,T] \}$ to the border $\partial \mathbb{H}$, namely
\[
Y_T(g)=\inf\{y:h(x)a(y)k(\theta)\in P_{g,T}\},
\]
which coincides with the $y$ associated two one of the extremes of the piece. Due to \eqref{ActionHorocycle} we have
\begin{equation}\label{ytsize}
Y_T(h(x)a(y)k(\theta))\asymp \min(y,\frac R{T^2}),
\end{equation}
where the symbol $\asymp$ is defined as follows.

\begin{definition}[Notation for bounds] 
We shall use the notation $f=O(g)$ or $f\ll g$ meaning $|f|\le C |g|$ for some constant $C>0$; we shall also write $f\asymp g$ as a substitute for $f\ll g\ll f$. Finally, we shall use the notation $f<g^{O(1)}$ and $f<g^{-O(1)}$, with $g>0$, meaning $|f|<g^{C}$ and $|f|<(g^{-1})^C$ respectively, for some constant $C>0$. The implicit constant $C$ will not depend on any other variable unless in a statement that contains an implication of the kind ``if $f_1=O(g_1)$ then $f_2=O(g_2)$''; in that case the constant implicit in $O(g_2)$ depends on the one in $O(g_1)$. On the other hand, whenever we use the notation $g^{o(1)}$ or $g^{-1/|o(1)|}$, we mean that the function in $o(1)$ depends just on $g$ and goes to zero as $g\to \infty$.
\end{definition}

Now, the key concept is the following.
\begin{definition}[Fundamental period] Let $g\in G$ and $T\ge 1$. We define the \emph{fundamental period} of $\xi=\Gamma g$ at distance $T$ as $y_T^{-1}$, where
\[
y_T=y_T(\Gamma g )=\sup \{Y_T(\gamma g):\gamma\in \Gamma\}.
\]
\end{definition}

The point of this definition is that we want to approximate our piece of horocycle by a closed one. Now, a closed horocycle has the shape
$
\Gamma a(y)H
$
with $H$ the closed subgroup $H=\{h(t):t\in \R\}$; the period of this closed horocycle is $y^{-1}$---in the sense that $\Gamma a(y)h(\cdot)$ is a periodic function of period $y^{-1}$. We shall see that the closed horocycle of period $y_T^{-1}$ is near to our original piece of horocycle $\{\Gamma g h(t): t\in [0,T]\}$.

From the discontinuity of the $\Gamma$ action one can deduce that the supremum in the definition of $y_T$ is actually reached.  Moreover, for $g\in U$ an open dense subset of $G$ one can assure that this happens for a unique point $g_T=h(x)a(*)k(*)$ with $-1/2<x\le 1/2$, of the shape either $\gamma g$ or $\gamma g h(T)$ for some $\gamma\in \Gamma$. This defines the key parameters $\theta_T$, $\alpha_T$, $y_T$ and $W_T$ associated to $g_T$ (see Figure \ref{highest-position}).

 
 \begin{figure}
   \caption{Piece of horocycle in highest position}
   \label{highest-position}
   \centering
 
 \ 
 
 \begin{tikzpicture}[scale=0.9]
 

\draw (-7,0) -- (7,0);
 
 \coordinate (gT) at ($ (-5,0) +(90:25)+ (-80:25) $);
 
 \filldraw (gT) circle (2pt) node[right=25pt] {$g_T=(x_T+iy_T,\theta_T)$} ;
 
 \draw [-triangle 45] (gT) -- ++(100:1) node[left] {$v_T$};
 
 
\draw[dashed] (gT) -- ++(90:1.4);

 \draw[dashed] (gT) ++(90:0.5) arc (90:100:0.5) node[above right=2.5pt] {$\theta_T$};
 
 \coordinate (gTt) at ($ (-5,0) +(90:25)+ (-71:25) $);
 
 \filldraw (gTt) circle (2pt) node[right=8pt] {$g_T h(t) \, \, \, \, t\in [0,T]$} ;
 
 \draw [-triangle 45] (gTt) -- ++(109:1);
 
 \coordinate (gThT) at ($ (-5,0) +(90:25)+ (-66:25) $);
 
 \filldraw (gThT) circle (2pt) node[right=8pt] {$g_T h(T)$} ;
 
 \draw [-triangle 45] (gThT) -- ++(114:1);
 
 \draw (gT) arc (-80:-66:25);
 
 \draw[dashed] (-5,0) arc (-90:-80:25);
 \draw[dashed] (gThT) arc (-66:-61.2:25);
 
 \draw[dashed] (-5,0) arc (-90:-94.7:25);
 
 \filldraw (-5,0) circle (2.5pt) node[below=2.5pt] {$\alpha_T$};
 
 \draw  (-5,2) node {$W_T=\cot \theta_T$};
 
 \end{tikzpicture}
 
 \end{figure}
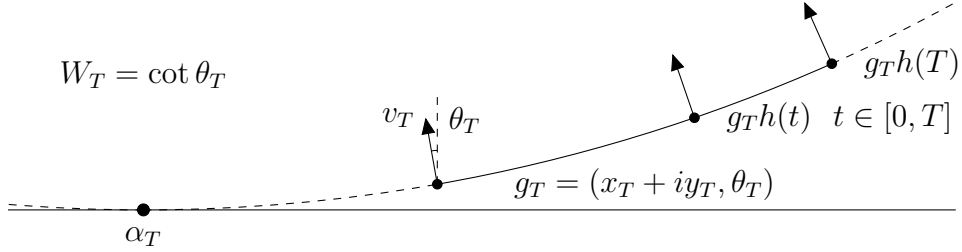
 

Let us define the following equivalence relation in the space of pieces of horocycles of length $T$: $P\sim P'$ if there exists $\gamma\in\Gamma$ such that $P'=\gamma P$ as sets. We can identify a piece $P$ with its point $g$ which is nearest to $\partial \mathbb H$---in case that both extremes are at the same distance from the border, we choose the left one.  In this way, we can see the space of pieces of horocycles of length $T$ as a subset of $PSL_2(\mathbb R)$. Then, we have just showed that
\[
D_{X,T} =\overline{ \left\{g_T:g\in U \cap D_X \right\} };
\]
is a fundamental domain for this equivalence relation. We give an explicit description of this fundamental domain beginning by realizing $y_T$ arithmetically; for that purpose we make the following definitions.

\begin{definition}[Torus distance] 
Let $\alpha\in \R$. We define the \emph{integral part} of $\alpha$, and write $[\alpha]$, as the nearest integer to $\alpha$. We also define its \emph{fractional part} as $\{\alpha\}=\alpha-[\alpha]$. Finally, we define $\|\alpha\|$ as the absolute value of $\{\alpha\}$.
\end{definition}

\begin{definition}[Rational approximation]
Let $\alpha\in \R$ and $U>0$. We define
\[
\kappa_U(\alpha)=\min\{m\in \N: \|m\alpha\|\le U^{-1}\}.
\]
\end{definition}
We have the following property regarding the previous definition.
\begin{lemma}\label{diophantine_property}
Let $\alpha\in\R$, $U> 0$ and $q\in \N$. If $\|q\alpha\|\le 1/U$ then either $\|q\alpha\|$ is a multiple of $\|\kappa_U(\alpha)\alpha\|$ or $q\ge U/2$. 
\end{lemma}
\begin{proof}
We can assume $U>2$. Writing $k=\kappa_U(\alpha)$, we have
\[
|\alpha-\frac{a}{q}|\le\frac 1{Uq}  \qquad   |\alpha-\frac{b}{k}|\le
\frac{1}{Uk}
\]
for some integers $a,b$, $(b,k)=1$. Thus, either $a/q=b/k$ or
\[
\frac{1}{qk}\le |\frac aq -\frac bk|\le\frac{1}{Uq}+\frac 1{Uk}\le \frac{2}{Uk}
\]
which implies $q\ge U/2$. In the former case, for some $\lambda\in \N$ we have $1/2>|q\alpha-a|=\lambda|k\alpha-b|$, so that $\|q\alpha\|=|q\alpha-a|$.
\end{proof}
From \eqref{ytsize} we know that $y_T(\Gamma g)$ is always $\gg (2T)^{-2}$. It is natural that we can improve on that by translating $g$ by different elements $\gamma\in \Gamma$. The following result reflects as far as we can go.

\begin{lemma}[Period realization]\label{fundamental}
Let $T\ge 5$. For any $g$ with $y=y(g)\gg 1$ we have
\[
y_T(\Gamma g) \asymp \min(y,R T^{-2})+T^{-1}\min\left(\frac {U}{\kappa_U(\alpha) }, \frac{U^{-1}}{\|\kappa_U(\alpha) \alpha\|}\right)^2.
\]
with $U=(T/R)^{1/2}$. Therefore, for any $g\in G$ we have
\[
y_T(\Gamma g)\gg T^{-1}.
\]
\end{lemma}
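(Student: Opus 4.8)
The plan is to reduce $y_T(\Gamma g)=\sup_{\gamma\in\Gamma}Y_T(\gamma g)$ to a Diophantine extremal problem and read off the answer. Since $y_T$ depends only on the coset $\Gamma g$, we may take $g=h(x)a(y)k(\theta)\in D_X$, so that $y\ge\sqrt3/2$, $R=y/\sin^2\theta\ge y$, $T\ge5$; put $z=x+iy$, $U=(T/R)^{1/2}$ and $\kappa=\kappa_U(\alpha)$. Writing $\gamma=\begin{bmatrix}a&b\\c&d\end{bmatrix}\in SL(2,\Z)$, the point $\gamma g$ has imaginary part $y/|cz+d|^2$, and $\gamma$ carries the horocycle through $g$ (the Euclidean circle of diameter $R$ tangent to $\partial\Hy$ at $\alpha$) to the horocycle through $\gamma g$, which is tangent at $\gamma\alpha$ and, since $\gamma'(\alpha)=(c\alpha+d)^{-2}$, has diameter $R(c\alpha+d)^{-2}$ (when $c\alpha+d=0$ the image is a horizontal line, of ``diameter'' $+\infty$). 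Feeding these two parameters into \eqref{ytsize}, and noting that the result depends on $\gamma$ only through its bottom row $(c,d)$, which runs over coprime pairs up to sign, we obtain
\[
y_T(\Gamma g)\ \asymp\ \sup_{\gcd(c,d)=1}\ \min\!\left(\frac{y}{|cz+d|^2},\ \frac{R}{T^2(c\alpha+d)^2}\right).
\]
Because $U^2=T/R$, one has the algebraic identity $T^{-1}\min(U/\kappa,\,U^{-1}/\|\kappa\alpha\|)^2=\min\!\big(\tfrac1{R\kappa^2},\,\tfrac{R}{T^2\|\kappa\alpha\|^2}\big)$, so it suffices to show this supremum is $\asymp\min(y,R/T^2)+\min\!\big(\tfrac1{R\kappa^2},\,\tfrac{R}{T^2\|\kappa\alpha\|^2}\big)$.

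For the lower bound I test the supremum at two pairs. The pair $(c,d)=(0,1)$ fixes $y$ and $R$ and contributes $\asymp\min(y,R/T^2)$. The pair $(c,d)=(\kappa,-[\kappa\alpha])$ is coprime --- if $e\mid\gcd(\kappa,[\kappa\alpha])$ then $\|(\kappa/e)\alpha\|\le\|\kappa\alpha\|/e\le U^{-1}$, contradicting the minimality of $\kappa$ unless $e=1$ --- and for it $|c\alpha+d|=\|\kappa\alpha\|$, while writing $x=\alpha+yW$ with $W=\cot\theta$ and using $y|W|\le\sqrt{yR}$ (from $R=y(1+W^2)$) together with $y\le R$ gives $|cz+d|^2\ll U^{-2}+\kappa^2yR\ll R\kappa^2y$ (here $U^{-2}=R/T$ and $T\kappa^2y\ge1$), hence $y/|cz+d|^2\gg1/(R\kappa^2)$. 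So this pair contributes $\gg\min\!\big(\tfrac1{R\kappa^2},\,\tfrac{R}{T^2\|\kappa\alpha\|^2}\big)$, and taking the larger of the two contributions gives the lower bound.

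The upper bound is the substantial step: one must show that \emph{every} coprime $(c,d)$ satisfies $\min\!\big(\tfrac{y}{|cz+d|^2},\,\tfrac{R}{T^2(c\alpha+d)^2}\big)\ll\min(y,R/T^2)+\min\!\big(\tfrac1{R\kappa^2},\,\tfrac{R}{T^2\|\kappa\alpha\|^2}\big)$. The case $c=0$ is trivial. For $c\ge1$ I combine the crude bounds $\tfrac{y}{|cz+d|^2}=\Im(\gamma z)\le\min(y,\tfrac1{c^2y})$ --- the first inequality because $z\in D_X$ has maximal imaginary part in its $SL(2,\Z)$-orbit --- with facts about the continued-fraction convergent denominators $q_0=1<q_1<\cdots$ of $\alpha$ (note $\kappa=q_n$ for the least $n$ with $\|q_n\alpha\|\le U^{-1}$): for $1\le c<\kappa$ minimality gives $|c\alpha+d|\ge\|c\alpha\|>U^{-1}$, so the second slot is $<RU^2/T^2=1/T$; for $\kappa\le c<q_{n+1}$ the best-approximation property gives $\|c\alpha\|\ge\|\kappa\alpha\|$, so the second slot is $\le\tfrac{R}{T^2\|\kappa\alpha\|^2}$; and for $c\ge q_{n+1}$ the inequality $q_{n+1}\ge\tfrac1{2\|\kappa\alpha\|}$ gives $\tfrac1{c^2y}\le\tfrac{4\|\kappa\alpha\|^2}{y}$. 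In each regime one compares with the right-hand side, subdividing according to which of $y$, $R/T^2$ is smaller, which of $\tfrac1{R\kappa^2}$, $\tfrac{R}{T^2\|\kappa\alpha\|^2}$ is smaller, and the size of $c$, and invoking $R\ge y\ge\sqrt3/2$, $T\ge5$, $U^2=T/R$. Geometrically this says that no $SL(2,\Z)$-translate of the horocycle through $g$ puts its length-$T$ arc essentially higher than the two translates built in the lower bound, and $U=(T/R)^{1/2}$ is exactly the scale picking out the ``resonant'' convergent of $\alpha$. Pushing this case analysis through uniformly in all the parameters is the step I expect to be the main obstacle.

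Finally, the bound $y_T(\Gamma g)\gg T^{-1}$ falls out of the displayed asymptotic. If $R\ge T$ then $\min(y,R/T^2)\ge\min(\sqrt3/2,\,1/T)=1/T$. If $R<T$ then $U>1$, so Dirichlet's box principle yields $1\le m\le\lceil U\rceil$ with $\|m\alpha\|\le U^{-1}$, hence $\kappa\le\lceil U\rceil<2U$, and therefore $\tfrac1{R\kappa^2}>\tfrac1{4T}$ while $\tfrac{R}{T^2\|\kappa\alpha\|^2}\ge\tfrac{RU^2}{T^2}=\tfrac1T$, so both entries of the second $\min$ are $\gg T^{-1}$.
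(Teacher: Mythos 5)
Your reduction of $y_T(\Gamma g)$ to a Diophantine supremum over coprime $(c,d)$, the two test pairs $(0,1)$ and $(\kappa,-[\kappa\alpha])$ for the lower bound, the algebraic identity recasting the right-hand side, and the final deduction of $y_T\gg T^{-1}$ are all correct and match the paper's mechanism. The upper bound, which you flag as the main obstacle, does have a concrete gap: the crude estimate $y/|cz+d|^2\le\min(y,1/(c^2y))$ is weaker by a factor of $R/y$ than what the case analysis requires. The bound actually needed is $y/|cz+d|^2\ll 1/(c^2R)$ for $c\ge1$, $d=-[c\alpha]$, $g\in D_X$, $T\ge5$, which is exactly what the paper obtains from its explicit formula $y_\gamma=\bigl(R(c^2+2c\epsilon\cos\theta+\epsilon^2)\bigr)^{-1}$ with $\epsilon=\pm(yR_\gamma)^{-1/2}$. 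To see your version fails, take $y\asymp1$, $R=100$, $T=10^4$, so $U=10$; with $\|\alpha\|=1/20$ one has $\kappa=1$ and the right-hand side of the lemma is $\asymp R/(T^2\|\alpha\|^2)=4\cdot10^{-4}$, but for $c\ge q_{n+1}\ge1/(2\|\alpha\|)=10$ your bound only gives $1/(c^2y)\le10^{-2}$, whereas $1/(c^2R)\le10^{-4}$.

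The sharper bound follows from the same inequality $y|W|\le\sqrt{yR}$ that you already used in the lower bound: with $|cz+d|^2=(\{c\alpha\}+cyW)^2+c^2y^2$ and $|\{c\alpha\}|\le1/2$, a short split on whether $R\le4y$ or $R>4y$ (the latter giving $cy|W|\ge\sqrt{3yR}/2>1$) shows $|cz+d|^2\gg c^2yR$. With that replacement the three ranges of $c$ compare cleanly: for $\kappa\le c<q_{n+1}$ the two slots of your min are bounded respectively by $1/(\kappa^2R)$ and $R/(T^2\|\kappa\alpha\|^2)$; and for $c\ge q_{n+1}$ the first slot is $\ll\|\kappa\alpha\|^2/R\le R/(T^2\|\kappa\alpha\|^2)$ since $\|\kappa\alpha\|\le U^{-1}=(R/T)^{1/2}$, and also $\le 1/(\kappa^2R)$. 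One further organizational remark: the paper separates $R\ge T$ (where $U\le1$ forces $\kappa=1$ and the statement collapses to $y_T\asymp\min(y,RT^{-2})$) from $R\le T$ (where the first summand is $\ll T^{-1}$ and is absorbed by the second); working inside one regime at a time considerably tames the bookkeeping you call the main obstacle.
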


\begin{proof}
We can rewrite the $\Gamma$ action (\ref{gammaMultiplication}) as
\[
y_{\gamma}=\frac{1}{R}\frac{1}{c^2+2 c\,\epsilon \cos\theta+\epsilon^2}, \hspace{20pt} R_{\gamma}=\frac{R}{(c\alpha+d)^2},
\]
where $\gamma$ is the matrix there---with $R_{\gamma}$ and $y_{\gamma}$ the corresponding parameters associated to $\gamma g$---and $\epsilon=\pm (yR_{\gamma})^{-1/2}$, the sign given by the one of $(c\alpha+d)\sin\theta$.  Let us first treat the case $R\ge T$. We want to show that $y_T \asymp \min(y,RT^{-2})$. If $y\le RT^{-2}$, since $y\gg 1$ it is clear that $y_T(\Gamma g)\asymp y$. If $y> RT^{-2}$, let us suppose that $y_T>C RT^{-2}$ for a large constant $C$. If $g_T$ equals either $\gamma g$ or $\gamma gh(T)$, we deduce from \eqref{ytsize} that $R_{\gamma}\gg C R$, so that $c\neq 0$ and $d=-[c\alpha]$ in the definition of $R_{\gamma}$. Therefore
\[
|y_{\gamma}-\frac{1}{c^2 R}|\ll \frac{1}{c^2R}(R_{\gamma}y)^{-1/2},
\]
and since $R_{\gamma}y\gg C R y \gg CT y \gg CT \gg C$ we get that $y_{\gamma}\ll c^{-2} R^{-1}\ll RT^{-2}$ which is in contradiction with our assumption.

Now let us treat the case $T\ge R$. Choosing $c=\kappa_U(\alpha)$ and $d=-[\kappa_U(\alpha)\alpha]$ in the previous formulas, we have $R_{\gamma}=R\|\kappa_U(\alpha)\alpha\|^{-2}\ge T$ and then $y_{\gamma}\asymp \kappa_U(\alpha)^{-2}R^{-1}$.
Considering (\ref{ytsize}) this clearly implies
\[
Y_{T}(\gamma g)\asymp \min(\frac 1{\kappa_U(\alpha)^2 R}, \frac{R}{T^2 \|\kappa_U(\alpha)\alpha\|^2}).
\]
which equals the second term in the sum of the Lemma's statement. Since $y_T(\Gamma g)\ge Y_{T}(\gamma g)$, it only remains to prove that $y_T(\Gamma g)\le C  Y_T (\gamma g) $ for some constant $C>1$. Let us suppose this is not the case; then there exists $\gamma_*\in \Gamma$ such that $Y_T (\gamma_* g)> CY_T (\gamma g)$ and since $Y_T (\gamma g) \gg T^{-1}$ by \eqref{ytsize} it follows that
$
\|c_*\alpha\|< U^{-1}
$
which by definition of $\kappa_U(\alpha)$ implies $c_*\ge \kappa_U(\alpha)$. Also we could repeat the previous reasoning to show that 
\[
Y_{T}(\gamma_* g)\asymp \min(\frac 1{c_*^2 R}, \frac{R}{T^2 \|c_*\alpha\|^2}).
\]
Finally,  by applying Lemma \ref{diophantine_property} with $q=c_*$ we have $Y_T(\gamma_* g)=O(Y_T (\gamma g))$ which is a contradiction. 
\end{proof}

Can we get something better than the bound $O(T)$ for the fundamental period? Not in general, think for instance of the case $g=a(T^{-1})$: then the piece of horocyle is just the closed horocyle of length $T$ and $y_T=T^{-1}$ in this case. This is not a coincidence, as the following result shows.

\begin{lemma}[Domain description]\label{FundDomainSubset}
For any $T\ge 2$ and $c>0$, we define the set
\[
B_c(T)=\{h(x)a(y)k(\theta): -1/2\le x \le 1/2, y^{-1}<Tc^{-1}, |\theta|<T^{-1}c^{-1} \}.
\]
We have that
\[
B_{c_1}(T)\subset  D_{X,T} \subset B_{c_2} (T) 
\]
for some positive constants $c_1, c_2$ and any $T\ge 2$.
\end{lemma}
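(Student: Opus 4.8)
The plan is to unwind the definition of $D_{X,T}$ and prove the two inclusions separately, the main inputs being \eqref{ytsize} and Lemma \ref{fundamental}. Up to the non-generic boundary cases absorbed by the closure in the definition of $D_{X,T}$ (which I will handle at the end by a routine approximation using the density of $U$ and the continuity of $g\mapsto g_{T}$ on $U$), a point $g'=h(x)a(y)k(\theta)$ with $x\in[-1/2,1/2]$ lies in $D_{X,T}$ precisely when it is the point nearest to $\partial\mathbb{H}$ of the highest $\Gamma$-translate $P^{*}$ of its class of length-$T$ horocycle arcs; equivalently, $g'$ is the lowest point of some length-$T$ arc $P^{*}$ through it and $P^{*}$ is in highest position among its $\Gamma$-translates. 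Since along any length-$T$ arc emanating from $g'$ the imaginary part is $R/((\tau-W)^{2}+1)$, which is unimodal in $\tau$, the lowest point of such an arc is an endpoint, and $g'$ is the lowest point of (at least) one of its two length-$T$ arcs exactly when $|W|=|\cot\theta|\ge T/2$, i.e.\ when $|\theta|\ll T^{-1}$.

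\emph{The inclusion $D_{X,T}\subset B_{c_{2}}(T)$.} Let $g'\in D_{X,T}$. By the observation above $|\theta(g')|\ll T^{-1}$; more precisely, since $g'$ is the lowest point of its arc $P^{*}$, the minimal imaginary part of $P^{*}$ equals $y(g')$, while by \eqref{ytsize} (applied with the orientation for which $g'$ is the starting point, or to the mirror image $z\mapsto-\bar z$, which preserves imaginary parts and $R$) this minimum is $\asymp\min(y(g'),R(g')T^{-2})$; hence $R(g')T^{-2}\gg y(g')$, so $\sin^{2}\theta(g')=y(g')/R(g')\ll T^{-2}$ and $|\theta(g')|\ll T^{-1}$. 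On the other hand $P^{*}$ is the highest translate in its class, so its minimal imaginary part is exactly $y_{T}(\Gamma g')$, and the last display of Lemma \ref{fundamental} gives $y(g')=y_{T}(\Gamma g')\gg T^{-1}$. Together with $|x(g')|\le 1/2$ this places $g'$ in $B_{c_{2}}(T)$ for a suitable absolute constant $c_{2}$ (and then $D_{X,T}$, being closed, still sits inside $B_{c_{2}}(T)$ after shrinking $c_{2}$ slightly).

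\emph{The inclusion $B_{c_{1}}(T)\subset D_{X,T}$.} Fix a large absolute constant $c_{1}$ and take $g=h(x)a(y)k(\theta)\in B_{c_{1}}(T)$. Using the reflection $z\mapsto-\bar z$, which reverses the horocycle flow and preserves $y$ and $R$, we may assume $\theta>0$; then $\theta<1/(Tc_{1})$ forces $\cot\theta>T/2$, so $g$ is the lowest point of $P_{g,T}$ and $Y_{T}(g)=y$. The crux is that $P_{g,T}$ is in highest position, i.e.\ $Y_{T}(\gamma g)\le y$ for every $\gamma\in\Gamma$. Writing $z(\tau)=x(\tau)+iy(\tau)$ for the point of $\mathbb{H}$ under $gh(\tau)$ and letting $g_{ij}$ be the entries of $g$, a direct computation gives
\[
\frac{dz}{d\tau}=\frac{1}{\bigl(g_{21}(\tau+i)+g_{22}\bigr)^{2}},\qquad\text{hence}\qquad\left|\frac{dz}{d\tau}\right|=\Im z(\tau)=y(\tau).
\]
With $g_{21}=-y^{-1/2}\sin\theta$, $g_{22}=y^{-1/2}\cos\theta$ and $0\le\tau\le T$, $\theta<1/(Tc_{1})$, one checks that $\Re(dz/d\tau)\gg y$ and $y\le y(\tau)\le(1+O(c_{1}^{-1}))y$; hence $x(\tau)$ is strictly increasing and sweeps an interval of length $\gg yT\ge c_{1}$. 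Now for $\gamma\in\Gamma$ with bottom row $(c,d)$ and $c\ne 0$ we have $\Im(\gamma z(\tau))=y(\tau)/\bigl((cx(\tau)+d)^{2}+c^{2}y(\tau)^{2}\bigr)$; as $cx(\tau)+d$ runs through an interval of length $\gg c_{1}$, for some $\tau_{0}$ we get $|cx(\tau_{0})+d|\ge 2$, whence $\Im(\gamma z(\tau_{0}))\le y(\tau_{0})/4\le(1+O(c_{1}^{-1}))y/4<y$ once $c_{1}$ is large enough, so $Y_{T}(\gamma g)<y$. If $c=0$ then $\gamma$ is an integer translation and $Y_{T}(\gamma g)=y$. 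Thus $P_{g,T}$ is the highest translate in its class and $g$ — its lowest point, with $x\in[-1/2,1/2]$ — is its attached point; choosing $\gamma_{0}\in\Gamma$ with $\gamma_{0}g\in D_{X}$ we see the point $g_{T}$ associated to $\gamma_{0}g$ equals $g$, and approximating $\gamma_{0}g$ within $U\cap D_{X}$ gives $g\in\overline{\{g_{T}:g\in U\cap D_{X}\}}=D_{X,T}$.

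\emph{Main obstacle.} The delicate part is the inner inclusion, and within it the two quantitative estimates: that the $\mathbb{H}$-projection of $P_{g,T}$ sweeps a real interval of length $\gg c_{1}$ (resting on the identity $|dz/d\tau|=\Im z(\tau)$ together with the lower bound $\Re(dz/d\tau)\gg y$, valid because $\theta$ is as small as $1/(Tc_{1})$), and that the resulting inequality $\Im(\gamma z(\tau_{0}))\le y(\tau_{0})/4$ genuinely beats $Y_{T}(g)=y$ rather than only matching it up to a constant — this is exactly why $c_{1}$ must be taken large, since Lemma \ref{fundamental} by itself only yields $y_{T}(\Gamma g)\asymp y$, not equality. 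The closure/continuity point at the very end is routine and I expect it to be already implicit in the discussion preceding the lemma.
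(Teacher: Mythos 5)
Your proof is correct. The outer inclusion is handled exactly as in the paper (via \eqref{ytsize} together with the lower bound $y_T\gg T^{-1}$ from Lemma~\ref{fundamental}), and the inner inclusion rests on the same key fact --- that the $\mathbb{H}$-projection of a low-lying length-$T$ segment has horizontal span $\gg yT\ge c_1$ for $g\in B_{c_1}(T)$ --- but you package it differently. The paper argues by contradiction: supposing $\gamma g\in D_{X,T}$ with $c_\gamma\neq 0$, it pits Lemma~\ref{fundamental}'s bound $y_T(\Gamma g)\ge\epsilon/T$ against the endpoint estimate $\min(\Im(\gamma g),\Im(\gamma gh(T)))\ll y/\bigl(c^2|x-x'|^2\bigr)\ll 1/(yT^2)$ to force a contradiction for $c_1$ large. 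You instead show directly that $Y_T(\gamma g)<y$ for every $\gamma$ with $c\neq 0$ by locating an interior $\tau_0$ with $|cx(\tau_0)+d|\ge 2$ and noting $y(\tau_0)\le(1+O(c_1^{-1}))y$, so that $y_T(\Gamma g)=y$ outright, with $g$ the normalized lowest point of the highest translate. Your direct form is marginally more self-contained (it avoids Lemma~\ref{fundamental}'s lower bound for this half), and the identity $|dz/d\tau|=\Im z(\tau)$ cleanly quantifies the horizontal sweep that the paper records as $|x'-x|\gg yT$; beyond this reorganization the two arguments are essentially the same.
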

\begin{proof}
The inclusion $D_{X,T} \subset B_{c_2}(T)$ comes just from Lemma \ref{fundamental} and the fact that by (\ref{ActionHorocycle}) the lowest point in any piece of horocycle of length $T$ has $\theta=O(T^{-1})$.

Let $g=h(x)a(y)k(\theta)\in B_{c_1}(T)$ for some large $c_1$. Suppose that $g\not \in D_{X,T}$, then there exists $\gamma\in \Gamma$ with $c=c_{\gamma}\neq 0$ such that $\gamma g\in D_{X,T}$. So, since $g$ and $g'=h(x')a(y')k(\theta'):=gh(T)$
are in $P_{g,T}$, by Lemma \ref{fundamental} we have
\[
\min(a(\gamma g),a(\gamma g')) = y_T(g) \ge \frac{\epsilon}T
\]
for some $\epsilon>0$. On the other hand, one can check that $|x'-x|\gg yT$ and $y'\asymp y$. Therefore, by (\ref{gammaMultiplication}) we have
\[
\min(a(\gamma g),a(\gamma g'))\ll \frac{y}{\max(|cx+d|,|cx'+d|)^2}\ll \frac{y}{|c|^2|x-x'|^2}\ll \frac{1}{yT^2}
\]
which is $O(\frac{1}{c_1 T})$, giving a contradiction for $c_1$ large enough.
\end{proof}

\emph{Remark:} This lemma implies that in any case, a large part of the piece of horocycle lies at height $O(y_T)$, thus showing that it is near to the closed horocycle of period $y_T^{-1}$. Moreover, it says that the fundamental domain is essentially
\[
|x|\le 1/2, \hspace{20pt}    y^{-1}\ll T \ll  |W|
\]
with $y^{-1}$ describing the period of the associated closed horocyle and $W=\cot\theta$ measuring the distance to it; $\theta=0$ being the extreme case in which the piece is actually a closed horocycle.

From \eqref{ActionHorocycle} we can write
\[
gh(t+\cot \theta)=h(\alpha-Rt^{-1})a(Rt^{-2})+O(t^{-1})
\]
meaning that both points are at distance $O(|t|^{-1})$ in $G$. Since $g_T$ can be either $\gamma g$ or $\gamma g h(T)$, we can always parametrize any piece of horocycle of length $T$ as
\begin{equation}\label{parametrization-piece-flat}
h\left(\alpha_T + \frac{y_T W_T}{1\pm t/W_T}\right)a\left(\frac {y_T}{(1\pm t/W_T)^2}\right)+O\left(\frac{W_T^{-1}}{1\pm t/W_T}\right)  \hspace{20pt} t\in [0,T].
\end{equation}

As we said before, we shall understand the piece of horocycle in terms of the parameters $\alpha_T,y_T$ and $W_T$ (and $s$ in the discrete case). But we are mainly interested in a fixed $\Gamma g$ and letting $s$ change, and in that situation we will be able to express the results just in terms of $\alpha$ and $sR^{-1}$. To do that, we need to relate both kind of conditions. First we write the necessary result for the continuous orbit. In order to read the following result, it is convenient to keep in mind that for any $g\in D_X$ with $y(g)<\delta^{-1}$ either $g$ or $\begin{bmatrix}0 & 1 \\ -1 & 0  \end{bmatrix}g$ satisfies that its coefficients $(R,\alpha,y^{-1})$ are bounded by $O(\delta^{-3})$.

\begin{lemma}[Fundamental period and continuous Dani]\label{period-made-explicit-continuous}
Let $0<\delta<1/2$ and $g_0\in G$ with coefficients  $(R,\alpha, y^{-1})$ bounded by $\delta^{-1}$. Then, $y_T^{-1}<\delta^{-O(1)}$ if and only if there exists a positive integer $q<\delta^{-O(1)}$ such that $\|q\alpha\|<\delta^{-O(1)}T^{-1}$. 
\end{lemma}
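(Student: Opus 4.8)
The plan is to read both implications off the explicit formula for $y_T$ supplied by Lemma \ref{fundamental}. First I would reduce to the case $g_0=h(x)a(y)k(\theta)\in D_X$: since $y_T$ depends only on the coset $\Gamma g_0$, replace $g_0$ by $\gamma_0 g_0\in D_X$, and observe that the bound $\ll\delta^{-1}$ on the entries of $g_0$ keeps $\gamma_0$, hence the Iwasawa parameters $y,R$ and the tangency point of the new representative, of size $\delta^{-O(1)}$. The tangency point changes by a $GL_2(\Z)$-Möbius map with entries $\ll\delta^{-O(1)}$, and the transfer principle for continued fractions shows that such a change alters a statement of the form ``$\exists\, q<Q$ with $\|q\alpha\|<\eta$'' only up to replacing $Q,\eta^{-1}$ by $\delta^{-O(1)}Q,\delta^{-O(1)}\eta^{-1}$. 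After this reduction (and a further normalisation of $\alpha$ into a bounded range, absorbed into the $O(1)$'s) one has $\delta^{O(1)}\ll y\ll\delta^{-O(1)}$ and $\delta^{O(1)}\ll R\ll\delta^{-O(1)}$, which is exactly what makes Lemma \ref{fundamental} quantitatively usable.

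Write $U=(T/R)^{1/2}$ and $q_*=\kappa_U(\alpha)$, so that Lemma \ref{fundamental} reads
\[
y_T\asymp\min(y,RT^{-2})+T^{-1}\min\!\big(U/q_*,\;U^{-1}/\|q_*\alpha\|\big)^{2}
\]
(the second term absent when $R>T$). Since $y$ and $R$ are trapped between powers of $\delta$, the first summand is $\gg\delta^{O(1)}$ precisely when $T\ll\delta^{-O(1)}$; and a one-line computation turns $U/q_*\gg\delta^{O(1)}T^{1/2}$ into $q_*\ll\delta^{-O(1)}$ and $U^{-1}/\|q_*\alpha\|\gg\delta^{O(1)}T^{1/2}$ into $\|q_*\alpha\|\ll\delta^{-O(1)}T^{-1}$. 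Thus the size of $y_T$ is governed by the dichotomy: $T\ll\delta^{-O(1)}$, or else $q_*\ll\delta^{-O(1)}$ together with $\|q_*\alpha\|\ll\delta^{-O(1)}T^{-1}$. For one direction of the biconditional, given an integer $q\ll\delta^{-O(1)}$ with $\|q\alpha\|\ll\delta^{-O(1)}T^{-1}$ I would plug $c=q$, $d=-[q\alpha]$ into the identities $R_\gamma=R(c\alpha+d)^{-2}$ and $y_\gamma=R^{-1}(c^2+2c\epsilon\cos\theta+\epsilon^2)^{-1}$ from the proof of Lemma \ref{fundamental}: then $R_\gamma=R\|q\alpha\|^{-2}$, and \eqref{ytsize} gives $Y_T(\gamma g_0)\asymp\min\!\big(q^{-2}R^{-1},\,RT^{-2}\|q\alpha\|^{-2}\big)$, so $y_T\geq Y_T(\gamma g_0)$ lands in the asserted range. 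For the converse, if $y_T$ lies in that range then by the dichotomy either $T\ll\delta^{-O(1)}$, in which case $q=1$ works since $\|\alpha\|\le\tfrac12\ll\delta^{-O(1)}T^{-1}$, or the threshold denominator $q=q_*=\kappa_U(\alpha)$ itself meets both requirements.

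The step I expect to be the real work is the continued-fraction bookkeeping relating the threshold denominator $q_*=\kappa_U(\alpha)$ to an \emph{arbitrary} good approximant $q$: one must show that any $q\ll\delta^{-O(1)}$ with $\|q\alpha\|\ll\delta^{-O(1)}T^{-1}$ forces $q_*$ to be a convergent denominator $\le q$ with $\|q_*\alpha\|$ still $\ll\delta^{-O(1)}T^{-1}$, using that $\delta^{-O(1)}T^{-1}\le U^{-1}$ in the relevant range (so the convergent underlying $q$ already clears the $\kappa_U$ threshold) and that for large $T$ no convergent can sit strictly between $q_*$ and $q$ without making $q\gg\delta^{-O(1)}$. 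Keeping all the implicit exponents consistent across the reduction to $D_X$, the case split $R\le T$ versus $R>T$, and the small- versus large-$T$ alternative is where the care must be concentrated.
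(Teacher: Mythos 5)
Your proposal is correct and follows exactly the route the paper takes: its entire proof of this lemma is the one-line remark that the statement "is just a consequence of Lemma \ref{fundamental}", and your argument is a faithful (and more detailed) unpacking of that — reduction to $D_X$ with all Iwasawa parameters trapped between powers of $\delta$, then reading both implications off the formula for $y_T$, using the explicit $\gamma$ with $c=q$, $d=-[q\alpha]$ for one direction and $q=\kappa_U(\alpha)$ for the other. (Note only that the condition in the statement should be read as $y_T^{-1}<\delta^{-O(1)}$, consistent with its use in Theorems \ref{implicit-dani-continuous} and \ref{dani-continuous}, which is indeed how you treated it; and the continued-fraction bookkeeping you flag as the "real work" is not actually needed, since both directions close without comparing an arbitrary $q$ to $\kappa_U(\alpha)$.)
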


\begin{remark}
For our application to orbits at prime values we will always have $\delta^{-1}=(\log T)^A$ for some constant $A>0$ in this lemma as well as in later statements. Moreover, throughout the whole paper we can assume that $(\delta^{-1})^c<T$ for some large constant $c>1$, because otherwise the results are trivial.
\end{remark}

\begin{proof}
Since we always have $y_T^{-1}\ll T$, we can assume $\delta^{-O(1)}\ll T$ in the proof. One can check that the constants implicit in the statement of Lemma \ref{fundamental} have a polynomial dependency on the constant in $y\gg 1$. Thus, applying it to our case, since $y>\delta$, we will get  $\delta^{-O(1)}$ as implicit constants. Therefore, since $\delta<R,y<\delta^{-1}$, we have that Lemma \ref{period-made-explicit-continuous} is a direct consequence of Lemma \ref{fundamental} and Lemma \ref{diophantine_property}.
\end{proof}

Now we write the analogous result for a discrete orbit. Probably it is better to skip it on first reading, at least until one arrives at Theorem \ref{dani-discrete}. Before, let us fix our notation for inverses modulo a number.
\begin{definition}[Modular inverses]
Let $q$ be an integer different from zero. For any $a\in \Z$ coprime to $q$, we define $\overline{a}$ as the integer between $1$ and $q$ such that $\overline a a \equiv 1 \modu q$.
\end{definition}

\begin{lemma}[Fundamental period and discrete Dani]\label{period-made-explicit-discrete}

Let $N\ge 1$, $0<\delta<1/2$. Let $g_0\in G$ with coefficients $(R_0,\alpha_0,y_0^{-1})$ bounded by $\delta^{-1}$. Then the following statements are equivalent, unless $s<\delta^{-O(1)}N^{-1}$:

\begin{enumerate}

\item There exists $q\in\N$ and $\gamma\in \Gamma$ with the coefficients of $\gamma$ and $q$ bounded by $(\delta^{-1}\tau(q_2))^{O(1)}$  such that $g=\gamma g_0$ satisfies
\[
\|q \frac sR\|<(\delta^{-1}\tau(q_2))^{O(1)}N^{-1}, \, \hspace{20pt}
\|\left[q  \frac sR \right]\alpha\| < (\delta^{-1}\tau(q_2))^{O(1)}(sN^2)^{-1}.
\]
where $\tilde q_2$ and $q_2$ are the denominators in the expressions as  reduced fractions of $\frac{[[q\frac sR]\alpha]}{[q\frac sR]}$ and $\frac{[q\frac sR]}{q\tilde{q_2}^2}$ respectively.

\item There exists $y<1$ and an integer $q'$ with $y^{-\frac 12}/(\tau(q_2')\delta^{-1})^{O(1)} <q'<y^{-\frac 12}(\tau(q_2')\delta^{-1})^{O(1)}$ such that $\Gamma g_0=\Gamma(x+iy,\theta)$ with 
\[
\|q'x\|+N\|q'sy\|+(sN)^2 q' |\theta| y< y^{\frac 12 }(\delta^{-1}\tau(q_2'))^{O(1)},
\]
where $q_2'$ is the denominator in the expression as a reduced fraction of $\frac{[q' s y]}{q'}$.
\end{enumerate} 
\end{lemma}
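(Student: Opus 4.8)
The plan is to read off both characterizations from Lemma~\ref{fundamental}, applied with $T=sN$, which already makes the \emph{continuous} fundamental period $y_T(\Gamma g_0)$ explicit; the extra ingredient needed for the discrete orbit is to locate, on the approximating closed horocycle, the best rational approximation to the ``sampling ratio'' $sR^{-1}$, and this is exactly what the auxiliary denominators $q_2,\tilde q_2,q_2'$ bookkeep. First I would set up a dictionary between the coordinates $(x,y,\theta)$ of a representative of $\Gamma g_0$ and the pair $(\alpha_\gamma,R_\gamma)$ attached to a $\Gamma$-translate $\gamma g_0$: from $R=y(\sin\theta)^{-2}$, $\alpha=x-y\cot\theta$ and the transformation formulas for $y_\gamma,R_\gamma,\epsilon$ recorded in the proof of Lemma~\ref{fundamental}, one sees how $sR^{-1}$ and $\alpha$ change under $\gamma$, and that the choice $c=\kappa_U(\alpha)$, $d=-[\kappa_U(\alpha)\alpha]$ with $U=(T/R)^{1/2}$ puts the piece in highest position. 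This identifies $y_T$ arithmetically in terms of $\kappa_U(\alpha)$ and $\|\kappa_U(\alpha)\alpha\|$, exactly as in Lemma~\ref{fundamental}, and simultaneously produces a representative of $\Gamma g_0$ of height $y<1$ with $q'\asymp y^{-1/2}$ governing the approximation, in the role played by $\kappa_U(\alpha)$ on the other side.

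With the dictionary in place, both implications become substitutions. For (1)$\Rightarrow$(2) I would start from $\gamma g_0$ with $\|q\,sR^{-1}\|$ and $\|[q\,sR^{-1}]\alpha\|$ small, use \eqref{ActionHorocycle} and \eqref{parametrization-piece-flat} to bring the orbit down to its lowest representative $(x+iy,\theta)$ with $y<1$, and track how the two modular conditions on $(sR^{-1},\alpha)$ turn into the three conditions on $q'x$ (rationality of the base point), $N\|q'sy\|$ (compatibility of the sampling with the period $y^{-1}$ of the shadowed closed horocycle) and $(sN)^2q'\theta y$ (the piece staying close to that horocycle over its whole length $T$); one then checks that the nested integral parts $[q\,sR^{-1}]$ and $[[q\,sR^{-1}]\alpha]$, upon reducing the corresponding fractions, cost only the denominators $\tilde q_2,q_2$, hence only a $\tau(q_2)^{O(1)}$ degradation of constants. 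The converse (2)$\Rightarrow$(1) runs the same computation backwards: one moves the low representative into highest position via Lemma~\ref{fundamental}, invoking the dichotomy from its proof that the optimal $c$ is either a multiple of $\kappa_U(\alpha)$ or of size $\asymp U$, and verifies that smallness of $\|q'x\|+N\|q'sy\|+(sN)^2q'\theta y$ forces the required $q$. It is cleanest to route both directions through a single intrinsic statement --- that the discrete orbit $(\delta^{-1}\tau)^{O(1)}$-shadows a discrete periodic orbit with at most $(\delta^{-1}\tau)^{O(1)}$ points --- and prove each of (1) and (2) equivalent to it.

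The main obstacle, and the reason for the forbidding shape of the statement, is the propagation of implied constants through these chained approximations: each replacement of a real by its nearest integer, and each reduction of a fraction, loses a factor governed by a freshly appearing denominator, and one must verify that the hypothesis $s>\delta^{-1}N^{-1}$ (equivalently $(\delta^{-1})^c<T$, as in the Remark) keeps every such denominator polynomially bounded and that the chain closes with a total loss of only $(\delta^{-1}\tau(q_2))^{O(1)}$ --- in particular that the auxiliary $q_2,\tilde q_2,q_2'$, themselves defined from the data, do not feed back and spiral. I expect no new idea beyond Lemma~\ref{fundamental}, but the case analysis --- according to which term of Lemma~\ref{fundamental} dominates, and whether the lowest representative is $\gamma g_0$ or $\gamma g_0 h(T)$ --- has to be carried out carefully and symmetrically.
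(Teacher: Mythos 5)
Your plan is essentially the paper's: convert between the $(\alpha, R, W)$ parametrization of $\gamma g_0$ and the Iwasawa $(x,y,\theta)$ of a low representative via an explicit $\Gamma$-translation of the form $\gamma_{a/q}$, and track how the modular conditions and the denominators of the nested reduced fractions transform. The substance of the proof in the paper is exactly this kind of substitution, done in both directions with the matrices \eqref{IwasawaY}, \eqref{IwasawaR} and $\gamma_{a_1/q_1}$, and it is precisely the bookkeeping you flag (verifying $\tilde q_2 = q_1'$ and $q_2 = q_2'$ so the auxiliary denominators do not spiral) that makes the constants close up at $(\delta^{-1}\tau(q_2))^{O(1)}$.

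Two small organizational discrepancies worth noting, neither a genuine gap. First, the paper's proof does not actually pass through Lemma~\ref{fundamental} or its $\kappa_U(\alpha)$ dichotomy; it is self-contained, producing the appropriate $\gamma_{a_1/q_1}$ directly from the rational approximation data embedded in conditions (i) or (ii). Since Lemma~\ref{fundamental} is itself proven by the same $\Gamma$-translation formulas, your route is computationally equivalent, just more circuitous. Second, routing both implications through a single intrinsic ``shadowing'' statement would require proving four implications instead of two and setting up the shadowing notion rigorously, whereas the paper just does the two substitutions directly; again this is a matter of packaging, not correctness. The one place you should be careful if you carry out the plan: in (i)$\Rightarrow$(ii) the paper does not descend to the \emph{lowest} representative but to the specific one $\gamma_{a_1/q_1} g$, and one must check $y<1$ and the two-sided bound on $q'$ rather than assume them from the choice of representative.
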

\begin{remarks}
The proof actually gives $q_2'=q_2$. One can check that conditions in (i) assure that the coefficients of $\gamma$ and $q$ are always bounded by $(1+s)^{o(1)}\delta^{-O(1)}$.  One can see in the proof that from (i) we actually get a $y$ in (ii) satisfying $y\gg (1+s)^{-2-o(1)}$.
\end{remarks}
\begin{proof}
Let us begin by demonstrating that (ii) implies (i). Let us write explicitly the Iwasawa decomposition
\begin{equation}\label{IwasawaY}
g=h(x)a(y)k(\theta)=\begin{bmatrix}
-xy^{-\frac 12}\sin\theta +y^{\frac 12}\cos\theta & xy^{-\frac 12} \cos\theta +y^{\frac 12}\sin\theta   \\
 -y^{-\frac 12}\sin\theta & y^{-\frac 12} \cos\theta  \\
\end{bmatrix},
\end{equation}
or equivalently, with $W=\cot \theta$,
\begin{equation}\label{IwasawaR}
g=\begin{bmatrix}
-R^{-\frac 12}\alpha  &  R^{-\frac 12} \alpha W+R^{\frac 12}  \\
 -R^{-\frac 12} &R^{-\frac 12} W\\
\end{bmatrix}.
\end{equation}
From (ii) we have that
\[
x=\frac{a_1'}{q_1'}+y O(M) \hspace{20pt} q_1'\mid q', (a_1',q_1')=1, M=(\tau(q_2')\delta^{-1})^{O(1)}.
\]
On the other hand, suppose that $y^{-1}>4M^2 s^2$. Then, $s\sqrt y<1/2M$, so $q'sy=(q'\sqrt y)(s\sqrt y)<M(1/2M)<1/2$ which implies $\|q'sy\|=q'sy$, thus $q_2'=1$. But then, (ii) gives $M=\delta^{-O(1)}$ and $s<\delta^{-O(1)}/N$.

All this means that we can assume $y^{-1}<4M^2 s^2$. Then considering the matrix
\[
\gamma_{a_1'/q_1'}= \begin{bmatrix}
-\overline{a_1'} & d_1'  \\
q_1' & -a_1'    \\	
\end{bmatrix}
\] 
in $\Gamma$, we have
\[
g^*=\gamma_{a_1'/q_1'} g= \begin{bmatrix}
-\overline{a_1'} y^{\frac 12}(1+  (yq_1'\overline{a_1'})^{-1} \theta O(M) ) &  *   \\
q_1' y^{\frac 12} (1+\theta O(M))  &  q_1'y^{\frac 12} O(M) \\
\end{bmatrix}.
\]
Since  $|\theta|<M(sN)^{-2}$, this gives
\[
R^{-1}=q_1'^2 y (1+\frac{O(M)}{s^2N^2}), \hspace{20pt}
\alpha=-\frac{\overline{a_1'}}{q_1'}+\frac{1}{q_1'^2 y}\frac{O(M)}{s^2N^2},
\]
with $\alpha=\alpha(g^*)$, $R=R(g^*)$---the corresponding parameters associated to $g^*$. It also gives the inequalities $y(g^*)^{-1}\ll M q_1'^{2} y\ll M$, so if $q_1'^2 y<M^{-1}$ then $g^*$ is in the fundamental domain, but then since $y(g_0)<\delta^{-1}$ this implies $q_1'^2 y\gg M^{-1}$.  Now, 
\[
\frac{q'}{q_1'}\frac{s}{R}= q_1' (q'sy)+\frac{O(M)}{sN^2}
\]
so that $q=q'/q_1'<M$ satisfies
\[
\|q \frac{s}{R}\|<\frac{M}{N}, \hspace{20pt}  [q \frac{s}{R}]=q_1'[q' s y]
\]
and also, using the expression for $\alpha$, we have 
\[
[[q\frac{s}{R}]\alpha]=-\overline{a_1'}[q'sy], \hspace{20pt}
\|[q\frac{s}{R}]\alpha\|<\frac{M}{sN^2}.
\]
On the other hand
\[
\frac{[[q\frac s{R}]\alpha]}{[q\frac s{R}]}=\frac{-\overline{a_1'}}{q_1'}, \hspace{20pt}  \frac{[q\frac s{R}]}{q q_1'^2}=\frac{[q'sy]}{q'}
\]
so $\tilde{q}_2=q_1'$ and $q_2=q_2'$.  Finally, we have that the coefficients of the matrix $g^*$ are bounded by $M$, so since $\Gamma g^*=\Gamma g_0$ we have $g^*=\gamma g_0$, $\gamma$ with coefficients bounded by $M$ and (i) follows.

Now let us prove that (i) implies (ii). Let $s=s(g), R=R(g)$ and $W=W(g)$. We have
\begin{equation}\label{RConditions}
q\frac{s}{R}=[q\frac sR]+\frac{M}{N},   \hspace{20pt} \alpha=\frac{a_1}{q_1}+\frac{M}{s^2N^2}
\end{equation}
with $M=(\tau(q_2)\delta^{-1})^{O(1)}$ for some coprime integers $a_1, q_1$, with $q_1\mid [q\frac sR]$. We also have $|W|R^{-\frac 12}=y(g)^{-\frac 12}<M$. Let us consider $g_*=\gamma_{a_1/q_1}g=(x+iy, 0) k(\theta)$. Via \eqref{IwasawaR}
\begin{equation}\label{MGamma1}
g_*=\begin{bmatrix}
* &  -R^{\frac 12}\overline{a_1}(1 +\frac{M}{\overline{a_1}q_1 })  \\
\frac{q_1 M}{s^2 N^2} & R^{\frac 12} q_1 (1+\frac M{s^2N^2}) \\
\end{bmatrix},
\end{equation}
so considering the components in the lower row we have (by (\ref{IwasawaY})) 
\[
|\tan\theta|< \frac{M}{s^2N^2},    \hspace{20pt} y=\frac{1}{Rq_1^2}(1+\frac{M}{s^2 N^2})
\]
so taking $q'=qq_1$  we have
\[
q'sy=\frac{1}{q_1}q\frac{s}{R}(1+ \frac M{s^2N^2})=\frac{[q\frac sR]}{q_1}(1+\frac M{sN})=\frac{[q\frac sR]}{q_1}+\frac{M}{q_1 N}
\]
so $\|q'sy\|<My^{\frac 12}N^{-1}$. We also have $M^{-1}y^{-\frac 12}<q'<My^{-\frac 12}$ and $(sN)^2 q'y|\theta|<y^{\frac 12} M$. Now, by the second column in \eqref{MGamma1} and by \eqref{IwasawaY} we have
\[
x+y\tan\theta =\frac{-R^{-\frac 12}\overline{a_1}(1+\frac M{\overline{a_1}q_1})}{R^{\frac 12} q_1(1+\frac M{s^2N^2})}=-\frac{\overline{a_1}}{q_1}+ \frac M{q_1^2}
\]
so $q'x= -q\overline{a_1}+y^{\frac 12}M$ and then $\|q'x\|<My^{\frac 12}$.  Finally
\[
\frac{a_1}{q_1}=\frac{[[q\frac sR]\alpha]}{[q\frac sR]},  \hspace{20pt}   \frac{[q'sy]}{q'}=\frac{[q\frac sR]}{qq_1^2}
\]
so $q_1=\tilde{q}_2$ and $q_2'=q_2$.
\end{proof}

\section{Automorphic forms}\label{automorphic-forms}

A key to our analysis of the averages of functions along pieces of long periodic horocycles is the use of automorphic forms. We will need the sharpest known estimates for periods of the type
\begin{equation}
\int_0^1 f( 
\begin{bmatrix}\label{period-of-function}
1 & x \\
0 & 1 \\
\end{bmatrix}
g) e(-h x) \, dx 
\end{equation}
where $h\in \Z$, $g\in G$ and $f$ is a mildly varying function on $L_0^2 (\gammaq\backslash G)$ and $q\ge 1$ is an integer. Here the subzero indicates that $\int_{\gammaq\backslash G} f(g) \, dg = 0$ and $\gammaq$ denotes the standard Hecke congruence subgroup of $SL_2(\Z)$.

If $h=0$ and $g=a(y)$ then (\ref{period-of-function}) measures the equidistribution of the closed horocycle of period $1/y$ in $\Gamma_0 (q)\backslash G$. This can be studied using Eisenstein series and the precise rate of equidistribution is tied up with the Riemann Hypothesis (see \cite{sarnak-horocycle} for the case $q=1$, the rate is $O_{\epsilon}(y^{\frac 14+\epsilon})$ if and only if RH is true). For $h\neq 0$ the size of (\ref{period-of-function}) is controlled by the full spectral theory of $L_0^2(\Gamma_0 (q)\backslash G)$ and in particular the Ramanujan/Selberg conjectures for $GL_2/ \Q$ (see the appendix to \cite{sarnak-rankin}). In this case (\ref{period-of-function}) is closely related to the much studied shifted convolution problem. In order to bench-mark the upper bound that we are aiming for, consider the case that $f\in L_0^2(\Gamma_0(q)\backslash G)$ is $K$ invariant, that is $f(gk(\theta))=f(g)$. Thus $f=f(z)$ with $z=x+iy\in \Hy$ and (\ref{period-of-function}) is the period
\begin{equation}\label{period-poincare}
\int_0^1 f(x+iy) e(-hx) \, dx.
\end{equation}
We are assuming that $f$ is smooth and in $L_0^2(\Gamma_0(q)\backslash \Hy)$. We use the Sobolev norms
\begin{equation}
\|f\|_{W^{2d}}^2 = \int_{\gammaq\back \Hy} |f(z)|^2\,  d A(z) + \int_{\gammaq\back \Hy} |\Delta^d f(z)|^2 \, d A(z)
\end{equation}
where $d\ge 0$ is an integer, $dA=\frac{ dx \, dy}{y^2}$ is the area form and $\Delta$ the Laplacian for the hyperbolic metric. 

Expanding $f$ in the Laplacian spectrum of $\gammaq\back \Hy$ (see \cite{iw1}) with $\phi_j$ an orthonormal basis of cusp forms and $E_j (z,s)$ the corresponding Eisenstein series, $j=1,2,\ldots, \nu(q)$, yields 
\begin{equation}\label{expansion-eigenfunctions}
f(z)=\sum_{j\neq 0} \langle f,\phi_j \rangle \phi_j(z) + \sum_{j=1}^{\nu} \frac 1{2\pi} \int_{-\infty}^{\infty} \langle f, E_j(\cdot, \frac 12+it)\rangle E_j(z,\frac 12 +it) \, dt.
\end{equation}
Note that $\phi_0 (z) = 1 / \sqrt{\text{Vol}(\gammaq \back \Hy )}$ does not appear since we are assuming that
\begin{equation}\label{zero-mass-condition}
\int_{\gammaq \back \Hy }  f(z) \, dA(z) =0.
\end{equation}
Let $\lambda_j$ denote the Laplacian eigenvalue of $\phi_j$ and write $\lambda_j =\frac 14 + t_j^2$. $\phi_j$ may be expanded in a Fourier series (see \cite{iw1})
\begin{equation}\label{eigenfunction-fourier}
\phi_j(z)=\sum_{n\neq 0} \rho_j(n) y^{\frac 12} K_{it_j} (2\pi |n| y) e(nx).
\end{equation}
Using the Atkin-Lehner level raising operators one can choose the orthonormal basis $\phi_j$ to consist of new forms and old forms and then normalizing the coefficients in (\ref{expansion-eigenfunctions}) amounts to bounding the residues of $L(s,\phi_j \times \phi_j)$ at $s=1$. These are known to be bounded above and below by $(\lambda_j q)^{\pm\epsilon}$ respectively (see \cite{hoffstein-lockhart,iw2}). In this way one has the bound (see \cite{i-l-s} for details when $q$ is square free which essentially is the case of interest to us, and \cite{blomer-harcos,blomer-harcos-adeles} for the general $q$)
\begin{equation}\label{bound-coeff-eigenfunctions}
\rho_j(h) \ll_{\epsilon} (\lambda_j q h)^{\epsilon} q^{-\frac 12} \cosh (\frac{\pi t_j}2 ) h^{\theta}
\end{equation}
for any $\epsilon>0$, and where $\theta$ is an acceptable exponent for the Ramanujan/Selberg Conjecture. $\theta=7/64$ is known to be acceptable \cite{kim-sarnak} while $\theta=0$ is what is conjectured to be true. 

It follows from (\ref{bound-coeff-eigenfunctions}), (\ref{expansion-eigenfunctions}) and (\ref{eigenfunction-fourier}) that
\begin{equation}\label{period-bound} 
\int_0^1 f(x+iy) e(-hx)\, dx \ll_{\epsilon} \frac{y^{\frac 12} (qh)^{\epsilon} h^{\theta}}{\sqrt q} \sum_{j\neq 0} |\langle f, \phi_j \rangle | \, |K_{it_j} (2\pi |h| y) \lambda_j^{\epsilon}| 
\cosh (\frac{\pi t_j}2)
 \, + \text{cts}
\end{equation}
where the term ``cts'' is a similar contribution from the continuous spectrum and for which $\theta=0$ is known since the coefficients of Eisenstein series are unitary divisor sums.

The Bessel function $K$ satisfies the inequalities, say for $v\ll 1$ (see \cite{ba}) 
\begin{align}\label{bessel-bound} 
K_{\nu}(v)  & \ll_{\epsilon} v^{-\nu-\epsilon} & 0 \le \nu \le  1/2, \notag  \\
K_{it} (v) & \ll_{\epsilon}  v^{-\epsilon} & 0\le t\le 1, \\
e^{\frac{\pi}2 t} K_{it} (v)  & \ll_{\epsilon} v^{-\epsilon}  & 1 \le t <\infty. \notag
\end{align}
Hence from (\ref{bound-coeff-eigenfunctions}) we have that for $0<|hy| \ll 1$ 
\begin{align}
\int_0^1 f(x+it) e(-hx) \, dx  & \ll_{\epsilon} \frac{y^{\frac 12 -\theta-\epsilon} (qh)^{\epsilon}}{\sqrt q} \sum_{j\neq 0} |\langle f , \phi_j \rangle |\, \lambda_j^{\epsilon}\label{period-bound-2} \\
& \ll_{\epsilon} \frac{y^{\frac 12 -\theta-\epsilon} (qh)^{\epsilon}}{\sqrt q} (\sum_{j\neq 0} |\langle f,\phi_j\rangle |^2 \lambda_j^2 )^{\frac 12} \, ( \sum_{j\neq 0} \lambda_j^{-2+2\epsilon})^{\frac 12}.
\end{align}
Weyl's law for $\gammaq \back \Hy$ gives the uniform bound
\begin{equation}\label{weyl-law-peter}
\sum_{\lambda_j\le \lambda} 1 \ll \text{Vol}(\domainq) \lambda
\end{equation} 
for $\lambda \ge 1$ and $q\ge 1$. Hence
\begin{equation}\label{weyl-law-2-peter}
\sum_{j\neq 0} \lambda_j^{-2+2\epsilon}\ll \text{Vol}(\domainq) =q \prod_{p\mid q} (1+\frac 1p) \text{Vol} (\Gamma_0(1) \back \Hy) \ll q^{1+\epsilon}.
\end{equation}
We conclude that for $|h|y\ll 1$ and $\epsilon>0$ 
\begin{equation}\label{period-bound-final}
\int_0^1 f(x+iy) e(-hx) \, dx \ll_{\epsilon} y^{\frac 12 -\theta- \epsilon} q^{\epsilon} \|f\|_{W^2}.
\end{equation}
For $\theta=0$ (\ref{period-bound-final}) is sharp, that is it cannot be improved. To see this take for example $h=1$ in
\begin{equation}\label{period-spectral-expression}
\int_0^1 f(x+iy) e(-hx)\, dx = y^{\frac 12}\sum_{j\neq 0} \langle f, \phi_j \rangle \rho_j (h) K_{i t_j} (2\pi |h| y) +  \text{cts}.
\end{equation}
Choosing
\[
f(z)=\sum_{0\le t_j \le 1}  \overline{\rho_j (1)} \, \overline{K_{it_j} (2\pi y)} \phi_j(z)
\]
yields
\[ 
\|f\|_{W^2}^2 \asymp\sum_{0\le t_j \le 1} |\rho_j(1)|^2 |K_{it_j} (2\pi y)|^2
\]
while  
\begin{equation}\label{period-special-function}
\int_0^1 f(x+iy) e(-h x) \, dx = y^{\frac 12} \sum_{0\le t_j \le 1} |\rho_j (1)|^2 |K_{it_j} (2\pi y) |^2  \asymp y^{\frac 12 } \|f\|_{W^2}^2.
\end{equation}
Recall that Iwaniec \cite{iw2} shows that
\begin{equation}\label{iwaniec-lower-bound} 
|\rho_j (1) |\gg_{\epsilon} (\lambda_j q)^{-\epsilon} \cosh (\frac{\pi t_j}2 ) /\sqrt q,
\end{equation}
hence changing $y$ a little if need be to make sure that $|K_{it_j}(2\pi y)| \gg 1$ for most $t_j\le 1$, we see that for this $f$
\begin{equation}\label{iwaniec-sobolev}
\|f\|_{W^2} \gg_{\epsilon} (q\lambda_j)^{-\epsilon}.
\end{equation}
It follows from (\ref{iwaniec-sobolev}) and (\ref{period-special-function}) that (\ref{period-bound-final}) is sharp when $\theta=0$.

On the other hand if $\theta>0$ then (\ref{period-bound-final}) can be improved for $q$ in the range $y^{-\frac 12 +2\theta} \le q \le y^{-\frac 12}$. To do so we estimate the $j$-sum in (\ref{period-special-function}) using the Kuznetsov formula for $\domainq$, rather than invoking the sharpest bound (\ref{bound-coeff-eigenfunctions}) 
for the individual coefficients. 
One applies Kuznetsov with suitably chosen positive (on the spectral side) test functions and then using only Weil's upper bound for the Kloosterman sums that appear on the geometric side of the formula, we get (see \cite{iw2}); for $X\ge 1$, $h\neq 0$
\begin{equation}\label{density-exceptional}
\sum_{0<\lambda_j<\frac 14} |\rho_j(h)|^2 X^{4 |t_j|} \ll 1+ \frac{|h|^{\frac 12} X}q 
\end{equation}
and for $\lambda \ge 1$
\begin{equation}\label{density}
\sum_{\frac 14 \le \lambda_j \le \lambda } |\rho_j(h)|^2 \cosh(\pi t_j)\le  \lambda ( 1+ \frac{|h|^{\frac 12}}q ).
\end{equation}
Now for $0< |hy| \ll 1$, using (\ref{bessel-bound}) and (\ref{period-spectral-expression}) we have 
\begin{align}\label{period-density-bound}
\int_0^1 f(x+iy) e(hx)\, dx & \ll y^{\frac 12} (\sum_{0<\lambda_j <\frac 14} |\rho_j(h)|^2 |hy|^{-2|t_j|} )^{\frac 12} (\sum_{\lambda_j <\frac 14} |\langle f, \phi_j \rangle |^2 )^{\frac 12} \\
& + y^{\frac 12 }  (\sum_{\lambda_j \ge \frac 14} |\rho_j(h)|^2 |\lambda_j|^{-2} )^{\frac 12} (\sum_{\lambda_j \ge \frac 14} |\langle f, \phi_j \rangle |^2 \lambda_j^2 )^{\frac 12}. \notag
\end{align}
Taking $X=|hy|^{-\frac 12}$ in (\ref{density-exceptional}) and applying it to the first term on the right hand side of (\ref{period-density-bound}) and applying (\ref{density}) for the second term we arrive at
\begin{equation}\label{period-density-bound-final}
\int_0^1 f(x+iy) e(-hx)\, dx \ll y^{\frac 12} \|f\|_{W^2} (1+\frac{y^{-\frac 14}}{\sqrt q} + \frac{|h|^{\frac 14}}{\sqrt q}) \ll y^{\frac 12} \|f\|_{W^2} (1+\frac{y^{-\frac 14}}{\sqrt q})
\end{equation}
since $|h|\ll y^{-1}$.

We combine (\ref{period-bound-final}) with (\ref{period-density-bound-final}) to arrive at our strongest unconditional estimate; for $0<|hy|\ll 1$ and $\epsilon>0$,
\begin{equation}\label{period-bound-combined}
\int_0^1 f(x+iy) e(-hx)\, dx \ll_{\epsilon} (y^{-1} q)^{\epsilon} y^{\frac 12} \|f\|_{W^2} \min(y^{-\theta}, 1+\frac{y^{-\frac 14}}{\sqrt q}).
\end{equation}
For the application to the level of equidistribution in type I and II sums connected to sieving as we do in section \ref{density-hecke-orbit}, (\ref{period-bound-combined}) can be improved slightly when summing over $h$ in certain ranges; we leave that discussion for section \ref{density-hecke-orbit}.

In generalizing (\ref{period-bound-final}) and (\ref{period-bound-combined}) to functions on $\gammaq \back G$ it is natural to use the spectral decomposition of $L_0^2 (\gammaq \back G) $ into irreducibles under the action of $G$ by right translation on this space. This is the path chosen in \cite{blomer-harcos} and \cite{blomer-harcos-adeles} and we will follow these treatments closely modifying it as needed for our purposes. 


The Lie algebra $\mathfrak g$ of $G$ consists of the two by two matrices of trace zero. An element $X$ of $\mathfrak g$ gives rise to a left invariant differential operator on $C^{\infty}(G)$;
\begin{equation}
D_X f(g)= \frac{d}{dt} f (g \exp(tX))_{t=0}.
\end{equation} 
The operators $D_H$, $D_R$, $D_L$ with $H, R, L=R-V$ in (\ref{LieBasis}) generate the algebra of left invariant differential operators on $C^{\infty}(G)$. For $k\ge 0$ define the Sobolev $k$-norms on functions on $\gammaq\back G$ by
\begin{equation}
\|f\|_{W^k} := \sum_{\mathrm{ord}(D)\le k} \|Df\|_{L^2(\gammaq\back G)},
\end{equation}
where $D$ ranges over all monomials in $D_H$, $D_R$ and $D_L$ of degree at most $k$.  For a unitary representation $\pi$ of $G$, $\mathfrak g$ acts on the associated Hilbert space $V_{\pi}$ and one can define Sobolev norms of smooth vectors in the same way. The center of the algebra of differential operators is generated by the Casimir operator $\omega$ which in our basis is given by 
\begin{equation}
\omega = -\frac 14 ( D_H D_H + 2 D_R D_H + 2 D_H D_R ).
\end{equation}
In Iwasawa coordinates it is given by
\begin{equation}
\omega =-y^2 (\frac{\partial^2}{\partial x^2} + \frac{\partial^2}{\partial y^2} ) + y \frac{\partial^2}{\partial x \partial \theta}.
\end{equation}
We decompose $L_0^2 (\gammaq\back G)$ under right translation by $g\in G$ into irreducible subrepresentations $\pi$ of $G$. This extends the decomposition in 
(\ref{expansion-eigenfunctions}) to
\begin{equation}\label{spectral-decom}
L_0^2 (\gammaq\back G) = \int_{\widehat{G}} V_{\pi} \, d\mu (\pi),
\end{equation}
where $\pi$ ranges over $\widehat G$ the unitary dual of $G$ and $\mu$ is a measure on $\widehat G$ (it depends on $q$ of course) which corresponds to this spectral decomposition. It consists of a cuspidal part on which the spectrum is discrete (cuspidal $\Leftrightarrow \int_0^1 f( 
\begin{bmatrix}
1 & x \\
0 & 1 \\
\end{bmatrix}
g) \, dx =0, \forall g$) and a continuous part corresponding to an integral over unitary Eisenstein series. According to (\ref{spectral-decom}) for $f\in L_0^2 (\gammaq \back G)$ 
\begin{equation}\label{spectral-decom-func}
f=\int_{\widehat{G}} f_{\pi} \, d\mu (\pi)
\end{equation}
and
\begin{equation}\label{parseval}
\|f\|_{L_0^2 (\gammaq \back G)}^2 = \int_{\widehat{G}} \| f_{\pi} \|_{V_{\pi}}^2 \, d\mu (\pi).
\end{equation}
Since $\pi$ is irreducible and $\omega$ commutes with the $G$-action it follows that $\omega$ acts on smooth vectors in $V_{\pi}$ by a scalar which we denote by $\lambda_{\pi}$. Weyl's law for the principal and complementary series representations of $\widehat{G}$  together with the dimensions of the discrete series representation in $L_0^2 (\gammaq \back G)$ imply that for $T\ge 1$,
\begin{equation}\label{weyl-law}
\mu\{\pi: |\lambda_{\pi}|\le T\} \ll \mathrm{Vol} (\gammaq \back G)  (1+T).
\end{equation}
Note that for $a,b$ non-negative integers and $f\in V_{\pi}$ smooth we have from $\omega f =\lambda_{\pi} f$ that
\begin{equation}\label{spectral-coefficients-decay}
\|f\|_{W^a} \le (1+\lambda_{\pi})^{-b} \|f\|_{W^{a+b}}.
\end{equation}
Using the Hecke operators, Atkin-Lehner theory and choosing suitable bases to embed the space of new forms of a given level $t\mid q$ to level $q$ (see \cite{i-l-s,blomer-harcos-adeles}) we can further decompose $L_0^2 (\gammaq \back G )$ into an orthogonal direct sum/integral of irreducibles on which the Fourier coefficients satisfy the analogue of (\ref{bound-coeff-eigenfunctions}). In more detail if $\pi$ is an irreducible constituent in the above decomposition and $\pi$ is of level $q$ (by which we mean that $V_{\pi}$ has a vector which is a classical modular new form of level $q$) then the Whittaker functional
\begin{equation}
W_f(y):=\int_0^1 f(h(t)a(y)) e(-t) \, dt
\end{equation}
is non-zero as a function of $f$ a smooth vector in $V_{\pi}$. Moreover as is shown in \cite{blomer-harcos-adeles}  
\begin{equation}\label{relation-whittaker-norm}
\frac{\langle f, f \rangle}{\mathrm{Vol} (\gammaq \back G)}=c_{\pi} \langle W_f, W_f \rangle 
\end{equation}
where the second inner product is the standard inner product on $L^2(\R^*, \frac{dy}y)$ and in the Kirillov model for $\pi$ and $c_{\pi}$ satisfies \footnote{note our normalization of the Sobolev norms on $C^{\infty}(\gammaq\back G)$ and that of \cite{blomer-harcos-adeles} differ by a factor of $\mathrm{Vol}(\gammaq\back G)$}
\begin{equation}\label{bound-cpi}
(\lambda_{\pi} q)^{-\epsilon} \ll_{\epsilon} c_{\pi} \ll_{\epsilon} (\lambda_{\pi} q)^{\epsilon}.
\end{equation}
Moreover the $m$-th coefficient ($m\neq 0$) for our period integral satisfies the relation; for $f\in V_{\pi}$
\begin{equation}\label{fourier-coeffcient-whittaker}
\int_0^1 f(h(t) a(y) ) e(-m t) \, dt = \frac{\lambda_{\pi}(|m|)}{\sqrt{|m|}} W_f (|m| y).
\end{equation}
Here $\lambda_{\pi}(m)$ is the eigenvalue of the $m$-th Hecke operator on $V_{\pi}$. Recall that we are assuming that these satisfy
\begin{equation}\label{Peter}
\lambda_{\pi}(m) \ll \tau(|m|) |m|^{\theta}
\end{equation}
and similarly that the Laplace eigenvalue $\lambda_{\pi}=\frac 14+t_{\pi}^2$ (in the case that $\pi$ is spherical), if $it_{\pi}>0$ then 
\begin{equation}\label{bound-laplace-eigenvalue}
it_{\pi}\le \theta.
\end{equation}
The invariant differential operators on $V_{\pi}$ induce an action on $W_f$, namely
\[
D_X W_f = W_{D_X f}
\]
and using $D_H= 2y \frac d{dy}$, $D_R=2\pi i y$ and $D_L=\frac 1{2\pi i} (\frac{-\lambda_{\pi}}{y}+y\frac{d^2}{dy^2})$ we get using the various normalizations and (\ref{relation-whittaker-norm}), (\ref{bound-cpi}), (\ref{fourier-coeffcient-whittaker}), (\ref{Peter}) and (\ref{bound-laplace-eigenvalue}) (see \cite{blomer-harcos-adeles}) that for $b\ge 0$ fixed, $m\neq 0$ and $f\in V_{\pi}$
\begin{equation}\label{bound-W-improved}
\int_0^1 f(h(t) a(y)) e(-mt) \, dt \ll_{\epsilon, b} |\frac{\lambda_{\pi}q}{my}|^{\epsilon} \frac{1}{\sqrt q} \frac{\tau(|m|) y^{\frac 12-\theta}}{1+|my|^{b}} \|f\|_{W^{4+b}} .
\end{equation} 
As mentioned above, in \cite{blomer-harcos-adeles} it is shown how this analysis may be extended to any $\pi$ not just the ones of level $q$. The same bounds (\ref{bound-W-improved})
may be established for the Eisenstein spectrum and in that case one can take $\theta =0$.  Hence if $f\in L_0^2 (\gammaq\back G)$ is smooth we may apply (\ref{spectral-decom-func}), (\ref{parseval}) and (\ref{weyl-law}) to arrive at
\begin{align}\label{bound-fourier-coefficient}
\int_0^1 f(h(t)a(y))e(-mt) dt  & = \int_{\widehat G} \int_0^1 f_{\pi} (h(t) a(y)) e(-mt) \, dt \, d\mu(\pi) \notag \\
& \ll_{\epsilon, b}\frac{q^{\epsilon-\frac 12}}{|my|^{\epsilon}}  \frac{\tau(|m|) y^{\frac 12-\theta}}{1+|my|^{b}}\int_{\widehat G} \|f\|_{W^{4+b}} \lambda_{\pi}^{\epsilon} \, d\mu(\pi) \notag  \\
& \ll_{\epsilon, b}|\frac{q}{my}|^{\epsilon}  \frac{\tau(|m|) y^{\frac 12-\theta} }{1+|my|^{b}} \|f\|_{W^{6+b}}.
\end{align}
For $m=0$ only the Eisenstein series enter and a similar analysis of the constant term yields that for smooth $f$'s in $L_0^2 (\gammaq \back G)$ 
\begin{equation}\label{bound-zero-fourier-coef}
\int_0^1 f(h(t)a(y)) \, dt \ll q^{\epsilon} y^{\frac 12-\epsilon} \|f\|_{W^6}.
\end{equation}
(\ref{bound-fourier-coefficient}) and (\ref{bound-zero-fourier-coef}) are the generalizations of (\ref{period-bound-final}) to $\gammaq\back G$ and as we have noted these bounds are essentially contained in \cite{blomer-harcos-adeles}. The slight improvement (\ref{period-bound-combined}) can also be incorporated into this $\gammaq\back G$ analysis and this gives our main estimate for the period integral:
\begin{proposition}\label{period-bound-proposition}
For $\epsilon>0$, $b\ge 0$ fixed and $\theta$ admissible for the Ramanujan/Selberg conjecture for $GL_2/\Q$, if $f\in C^{\infty} (\gammaq\back G)$ and $m\in\Z$, then for $m\neq 0$
\[
\int_0^1 f(h(t)a(y)) e(-mt) \, dt \ll_{\epsilon, b} (\frac{q}{|m|y})^{\epsilon}  \frac{\tau(|m|) y^{\frac 12} \|f\|_{W^{6+b}}}{1+|my|^{b}} \min(y^{-\theta}, \frac{1}{y^{\epsilon}}+ \frac{y^{-\frac 14}}{y^{\epsilon} q^{\frac 12}})
\]
while for $m=0$, assuming $\int_X f \, d\mu_G =0$,
\[
\int_0^1 f(h(t) a(y) ) \, dt \ll_{\epsilon} (qy^{-1})^{\epsilon} y^{\frac 12} \|f\|_{W^6}.
\]
\end{proposition}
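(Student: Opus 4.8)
The plan is to assemble the estimate from ingredients that are essentially already in place. The case $m=0$ requires nothing new --- it is exactly \eqref{bound-zero-fourier-coef}. For $m>0$ the alternative in the minimum carrying $y^{-\theta}$ is precisely \eqref{bound-fourier-coefficient} (and $\tau(m)\ge 1$), so the only thing to prove is the companion bound with $y^{-\epsilon}+y^{-1/4-\epsilon}q^{-1/2}$ in place of $y^{-\theta}$, which is the $\gammaq\back G$ analogue of \eqref{period-density-bound-final}; unlike the single-representation estimate \eqref{bound-W-improved}, this version has lost its $q^{-1/2}$ to the Weyl-law volume growth, so it is a genuine improvement only in the range $q>y^{-1/2+2\theta}$. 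Moreover it suffices to treat $|my|\ll 1$: for $|my|\gg 1$ the period integral decays exponentially (the $m$-th Fourier coefficient of a smooth automorphic form is $\ll e^{-2\pi|my|}$ times a Sobolev norm), and the claimed bound is trivial there. I would obtain the $|my|\ll 1$ estimate by transplanting the proof of \eqref{period-density-bound-final} into the adelic spectral framework of \eqref{bound-W-improved}--\eqref{bound-fourier-coefficient} and \cite{blomer-harcos-adeles}.

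Concretely: decompose $f=\int_{\widehat G}f_\pi\,d\mu(\pi)$ as in \eqref{spectral-decom-func}, so that by \eqref{fourier-coeffcient-whittaker} the period integral equals $\int_{\widehat G}\frac{\lambda_\pi(m)}{\sqrt m}\,W_{f_\pi}(my)\,d\mu(\pi)$, with the divisor bound $\lambda_\pi(m)\ll\tau(m)$ pulled out in front. Split $\widehat G$ into the finitely supported exceptional part $\mathcal E=\{\pi:0<it_\pi\le\theta\}$ and its complement $\mathcal T$ (tempered principal series together with the discrete series). On each piece apply Cauchy--Schwarz in $d\mu(\pi)$ against a weight $\lambda_\pi^{\pm2}$: by \eqref{parseval} and \eqref{spectral-coefficients-decay} one factor is $\ll\|f\|_{W^{6+b}}$, while the other is an integral of squared $m$-th Fourier coefficients of $L^2$-normalized vectors, which by the Kirillov normalization \eqref{relation-whittaker-norm}--\eqref{bound-cpi} and the $\mathrm{GL}_2$-Whittaker bounds \eqref{bessel-bound} is controlled by the $\gammaq\back G$ versions of the Kuznetsov density estimates \eqref{density-exceptional}, \eqref{density}. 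On $\mathcal T$ this gives $\ll(my)^\epsilon(1+m^{1/2}/q)\ll(my)^\epsilon(1+y^{-1/2}/q)$ since $m\le y^{-1}$; on $\mathcal E$ one has $|W_{f_\pi}(my)|\ll(my)^{-it_\pi+\epsilon}\|W_{f_\pi}\|$, so taking $X=(my)^{-1/2}\ge 1$ turns the exceptional contribution into $\int_{\mathcal E}(\cdots)X^{4|t_\pi|}\,d\mu$, bounded by $1+m^{1/2}X/q=1+y^{-1/2}/q$. Taking square roots and collecting, the period integral is $\ll(q/my)^\epsilon\,\tau(m)\,y^{1/2}\,\|f\|_{W^{6+b}}\,(1+y^{-1/4}/\sqrt q)$, and the decay factor $(1+|my|^b)^{-1}$ is restored exactly as in the passage to \eqref{bound-W-improved}: run the whole argument with $f$ replaced by $D_L^b f$, which multiplies each $W_{f_\pi}(my)$ by a factor $\ll(1+|my|)^{-b}$ at the cost of $b$ extra Sobolev derivatives.

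The step I expect to be the real obstacle is the density input: one must establish the analogues of \eqref{density-exceptional} and \eqref{density} for the \emph{full} spectrum of $L^2_0(\gammaq\back G)$, in particular for the non-spherical principal series and the holomorphic discrete series, whose Whittaker functions are genuine $\mathrm{GL}_2$-Whittaker functions rather than $K$-Bessel functions. This requires running the Kuznetsov formula for $\gammaq$ with a test function whose full Bessel/Selberg transform is nonnegative and bounded below on the relevant spectral window while its geometric side remains amenable to Weil's bound for Kloosterman sums, and then tracking the dependence of all implied constants on $q$ and the precise Sobolev exponents through the normalizations \eqref{relation-whittaker-norm}--\eqref{bound-cpi}. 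This has been carried out in \cite{blomer-harcos-adeles} (and, in the spherical case, in \cite{iw2}), so the work here is bookkeeping and uniformity in $q$ rather than a new idea.
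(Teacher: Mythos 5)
Your proposal matches the paper's (very terse) proof: the paper indeed obtains the proposition by quoting \eqref{bound-zero-fourier-coef} for $m=0$, \eqref{bound-fourier-coefficient} for the $y^{-\theta}$ alternative, and then ``incorporating the slight improvement \eqref{period-bound-combined} into the $\gammaq\backslash G$ analysis'' --- i.e.\ exactly your program of running the $K$-invariant Kuznetsov density argument \eqref{period-density-bound}--\eqref{period-density-bound-final} through the adelic/Kirillov normalizations of \cite{blomer-harcos-adeles}, splitting $\widehat G$ into complementary-series and tempered pieces. One small slip: the $(1+|my|^b)^{-1}$ decay comes from $D_R$, not $D_L$ --- since $D_R=2\pi i y$ acts by multiplication in the Kirillov model, one writes $W_f(my)=(2\pi i my)^{-b}W_{D_R^b f}(my)$ and then runs the argument with $D_R^b f$; the operator $D_L=\frac{1}{2\pi i}(-\lambda_\pi/y+y\,d^2/dy^2)$ would introduce $y^{-1}$ growth rather than $|my|^{-1}$ decay.
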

\begin{remark}
 For $\Gamma=\Gamma_0(1)$ the inequality (\ref{bound-laplace-eigenvalue}) is satisfied for $\theta=0$, and then in the bound for $m\neq 0$ in Proposition \ref{period-bound-proposition} we can substitute $y^{-\theta}$ by $|m|^{\theta}$  (see \cite{blomer-harcos}). For $\pi$ continuous we can substitute $y^{-\theta}$ by $\min(1,|t_{\pi}|)$ (see \cite{blomer-harcos,bruggeman-motohashi}).
\end{remark}

We can improve slightly Proposition \ref{period-bound-proposition} in average as follows. First, by Parseval we get
\begin{equation}\label{parseval-smooth}
 \sum_m |\tilde f (m,y)|^2 = \int_0^1 |f(h(x)a(y)|^2 \, dx \le \|f\|_{L^{\infty}}^2
\end{equation}
where $\tilde f(m,y)= \int_0^1 f(h(t)a(y)) e(-mt)\, dt$. Moreover, integrating by parts $b$ times and using the identity $h(\epsilon)a(y)=a(y)h(\epsilon/y)$ we get the bound $|\tilde f(m,y)|\ll_b (|m|y)^{-b} |\widetilde{D_R^b f}(m,y)|$, so by Cauchy's inequality and Parseval we have 
\begin{equation}\label{parseval-smooth-large}
\sum_{|m|>M} |\tilde f(m,y)|^2\ll_b(My)^{-b} \|D_R^b f\|_{L^{\infty}}^2. 
\end{equation}

To end this section we record some bounds for sums over Hecke eigenvalues $\lambda_{\pi}(n)$, that will be needed later. We restrict to $\Gamma=\Gamma_0(1)$ the full modular group as this is what will be used. Since such a  $\pi$ is either a fixed holomorphic form or a fixed Maass form it is known that (for the holomorphic case it is classical while for the Maass case see \cite{hafner}): for $T\ge 1$ and $x\in\R$ 
\begin{equation}\label{hafner-bound}
\sum_{m\le T} \lambda_{\pi} (m) e(mx) \ll_{\pi,\epsilon} T^{\frac 12+\epsilon}.
\end{equation}
We need control on the dependence in (\ref{hafner-bound}) of the implied constant. This can be done in terms of the eigenvalue $\lambda_{\pi}$ of $\pi$. It is also convenient for us here and elsewhere to work with the smooth normalized sums;
\begin{equation}\label{smooth-eigen-sum}
\sum_m \frac{\lambda(m)}{\sqrt m} e(mx) \psi(m u)
\end{equation}
for $u>0$ and small.

We can use the set up above with Whittaker functions to bound the sums in (\ref{smooth-eigen-sum}). For $f\in V_{\pi}$ one can control the $L^{\infty}$-norm of $f$ by its Sobolev norms (see \cite{blomer-harcos}):
\begin{equation}\label{PiDiscInfty}
\|f\|_{L^{\infty}}\ll \tilde\lambda_{\pi}^{3+o(1)}\|f\|_{W^3}
\end{equation}
for $\pi$ discrete, with $\tilde\lambda_{\pi}=\max(1,|\lambda_{\pi}|)$, and 
\begin{equation}
\label{PiContInfty}
\sup_{g=h(x)a(y)k(\theta)\in G}\frac{|f(g)|}{y^{1/2}+y^{-1/2}}\ll \tilde \lambda_{\pi}^{3+o(1)}\|f\|_{W^{3}}
\end{equation}
for $\pi$ continuous. 

Let $\pi$ be discrete. Suppose that $W_f$ has support in $(1,2)$ for some $f\in V_{\pi}$. From (\ref{relation-whittaker-norm})and the action of the differential operators on $W_{f}$ we deduce
\begin{equation}\label{bounding-f-with-W}
\|f\|_{W^b}\ll_b \tilde\lambda_{\pi}^{b+o(1)} \|W_{f}\|_{W^{2b}},
\end{equation}
with the norms for $W_f$ being the usual Sobolev norms for real functions.
On the other hand, for any $\psi\in C_0^{\infty}((1,2))$, there exists $f\in V_{\pi}$ with $W_{f}=\psi$. This comes from the fact that if $f\in V_{\pi}$ then $f_g(x)=f(xg) \in V_{\pi}$ for any $g\in G$, and $W_{f_{a(y)}}(u)=W_{f}(yu)$, $W_{f_{h(x)}}(u)=e(x)W_f(u)$.
Then,  due to \eqref{PiDiscInfty} and \eqref{bounding-f-with-W}, for any $0<u<1$ we have
\begin{equation}\label{bound-fourier-series-discrete}
\sum_m \frac{\lambda_{\pi}(m)} {\sqrt m}e(mx)\psi(mu)=f(h(x)a(u)) \ll \tilde\lambda_{\pi}^{6+o(1)}\|\psi\|_{W^6}.
\end{equation}
In the same way we have \footnote{Note that in terms of the dependence in $\lambda_{\pi}$ this is much weaker that Iwaniec's \cite{iw2}, $\sum_{m\le T}|\lambda_{\pi}(m)|^2\ll_{\epsilon} T\tilde \lambda_{\pi}^{\epsilon}$ and its extensions (\ref{fourth-moment}) and (\ref{eight-moment})}
\begin{equation}\label{parseval-discrete}
\left(\sum_m |\frac{\lambda_{\pi}(m)}{\sqrt m}|^2 |\psi(mu)|^2 \right)^{\frac 12}= \left(\int_0^1 |f(h(t)a(u))|^2 dt \right)^{\frac 12} \ll \tilde\lambda_{\pi}^{3+o(1)}\|\psi\|_{W^6}.
\end{equation}
For $\pi$ continuous, the suitable norm for the Hilbert space $V_{\pi}$ satisfies $\min(1,|t_{\pi}|)\|f\|_{V_{\pi}} \asymp  \|W_f\|_{L^2}$ and again from the action of the differential operators on $W_f$ we have
\begin{equation}\label{bounding-f-with-W-cont}
\min(1,|t_{\pi}|)\|f\|_{W^b}\ll_b  \, \tilde \lambda_{\pi}^{b+o(1)}  \|W_f\|_{W^{2b}}.
\end{equation}
Proceeding as in the discrete case, from \eqref{PiContInfty} and \eqref{bounding-f-with-W-cont} and sending $h(x)a(u)$ to the fundamental domain gives
\begin{equation}\label{WiltonCont}
\min(1,|t_{\pi}|)\sum_{m} \frac{\lambda_{\pi}(m)}{\sqrt m}e(mx)\psi(mu )\ll \, \min(\frac{u^{-1/2}}{q}, \frac{u^{1/2}}{\|qx\|})  \tilde\lambda_{\pi}^{6+o(1)}\|\psi\|_{W^6},
\end{equation}
where $q$ is the natural number smaller than $u^{-1/2}$ for which the quantity in the formula is the largest.  This last formula can be improved (by using Poisson Summation in $\R^2$, or by Voronoi formula) whenever $1/\|qx\|>1/uq$, in the sense of adding the decay factor
\[
(\frac{1/\|qx\|}{1/uq})^{-b}  
\]
for any fixed $b>0$ to the right of the inequality---but paying with a factor $O_b(\tilde\lambda_{\pi}^{O(b)}\|\psi\|_{W^{O(b)}})$.

\section{Effective equidistribution:  continuous algebraic measures}\label{effective-equid-non-discrete}

In this section we prove effective versions of Dani Theorem. We begin with some useful notation.  First we clarify the notion of continuous algebraic measure.

\begin{definition}[Continuous algebraic measure]
We shall say that an algebraic measure on $X$ is \emph{continuous} if it arises as limit when $T$ goes to infinity of the probability measures carried by the pieces of horocycle  $\Gamma g h(st)$, $t\in [0,T]$, for some fixed $g\in G, s\in \R$.
\end{definition}

 Since we are going to talk about probability measures, it is a good idea to normalize sums.

\begin{definition}[Expectation]
Let $J\subset \R$ be a finite set. Let $f:\R\to \mathbb C$. We define the \emph{expectation} of $f$ in $J$, and we write it as
\[
\mathbb E_{x\in J} f(x),
\]
as $\sum_{x\in J} f(x)/\sum_{x\in J} 1$. If $J$ is a bounded subinterval of $\R$, we define the expectation in the same way but using integrals.
\end{definition}


To deal with bounded functions that vary slowly, especially near the cusp, we use the following Lipschitz norms. 

\begin{definition}[Lipschitz norm]
Let $f:X\to \mathbb C$. We define the \emph{Lipschitz norm} of $f$ as
\[
\|f\|_{\mathrm{Lip}}=\|f\|_{L^{\infty}}+ \sup_{x\neq x'\in X} \frac{|f(x)-f(x')|}{\widehat{d_X} (x,x')}.
\]
with $\widehat{d_X}$ the metric on $X$ defined as
\[
\widehat{d_X}(x,x')=\min(d_X(x,x'), e^{-d_X(x,\Gamma I)}+e^{-d_X(x',\Gamma I)} ),
\]
where $I$ is the identity matrix in $G$.
\end{definition}
Notice that the use of the metric $\widehat{d_X}$ instead of $d_X$ in the previous definition implies that  $f$ can be seen as a function in the one-point compactification of $X$.

Finally, we define the quantitative concept of equidistribution that we shall use to describe our main results


\begin{definition}[Effective equidistribution]
Let $0<\delta<1/2$. Let $g:J\to X$, with $J$ either a subinterval of $\mathbb Z$ or of $\mathbb R$. We say that $g(x), x\in J$ is $\delta$-equidistributed w.r.t. a Borel probability measure $\mu$ on $X$ if 
\[
|\E_{x\in J}  f(g(x))  -   \int f \, d\mu |  \le \delta \|f\|_{\text{Lip}}
\]
for any $f:X\to \mathbb C$.
\end{definition}
\begin{remark}
This concept of quantitative equidistribution is much weaker than the one used by Green and Tao in \cite{green-tao-equidistribution}. 
\end{remark}

The Lipschitz norm controls the Sobolev norms in the following sense: let $\delta>0$; for any $f\in C(X)$ with $\|f\|_{\mathrm{Lip}}=1$ there exists another function $f_{\delta}\in C(X)$ such that
\begin{equation}\label{lipschitz-controls-sobolev}
\|f-f_{\delta}\|_{L^{\infty}}<\delta
\end{equation}
and $\|Df_{\delta}\|_{L^{\infty}}\ll_{\mathrm{ord} D} \delta^{-\mathrm{ord} D}$ for any left-invariant differential operator. This will allow us from now on to substitute a function with bounded Lipschitz norm by one with ``bounded'' Sobolev norms\footnote{Note that in Sections \ref{effective-equid-non-discrete}, \ref{section-discrete-measures} and \ref{section-closure-primes} the entire discussion takes place on $X$; that is $\Gamma=SL_2(\Z)$. It is only in Section \ref{density-hecke-orbit} that the level $q$ of congruence subgroups is relevant.}. We can build $f_{\delta}$ as follows: for two functions $f,w\in L^2(SL_2(\R))$ define its convolution as
$f*w(g)=\int_{SL_2(\R)} f(gt) w(t^{-1}) \, d\mu(t);$
if $f$ is $\Gamma$-invariant then so is $f*w$.  
Moreover we can also write $f*w(g)=\int_{SL_2(\R)} f(t) w(t^{-1}g) \, d\mu(t)$ by the left invariance of $d\mu$. Pick $\psi\in C_0^{\infty}((0,1))$ a non-negative function and define $\psi_{\delta}(g)=c_{\delta} \psi(\delta^{-1}d_{SL_2(\R)}(g,I))$ with $c_{\delta}$ the constant that gives $\int_{SL_2(\R)} \psi_{\delta}(g^{-1})\, d\mu(g)=1$. Finally, defining $f_{\delta}=f*\psi_{\delta}$ one can check that it satisfies the desired properties.

Before beginning with our quantitative results, let us recall the following lemma on cancellation in oscillatory integrals.

\begin{lemma}[Integration by parts]\label{integration-parts}
Let $A>1$. Let $\eta \in C_0^{\infty}(1,2)$,  $F\in C^{\infty}(\R)$, $\eta$ with bounded derivatives and $|F'|\asymp A$, $F^{(j)}\ll_j A$ for any $j\in \N$.  Then
\[
\int \eta(t) e(F(t)) \, dt \ll A^{-\frac 1{|o(1)|}}.
\]
\end{lemma}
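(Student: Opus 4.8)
The plan is the standard non-stationary phase argument: repeated integration by parts, each application producing a clean gain of $A^{-1}$ while keeping all derivative bounds under control. First I would unpack the conclusion. The estimate $\int \eta(t)\, e(F(t))\, dt \ll A^{-1/|o(1)|}$ is just a compact way of saying that for every fixed integer $k\ge 1$,
\[
\int \eta(t)\, e(F(t))\, dt \ll_k A^{-k},
\]
with the implied constant depending only on $k$, on the constants in $|F'|\asymp A$ and $F^{(j)}\ll_j A$, and on the bounds for the derivatives of $\eta$ — but not on $A$ itself (so the bound then holds for all $A>1$, trivially for bounded $A$).

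The engine is the identity $e(F(t)) = \frac{1}{2\pi i\, F'(t)}\,\frac{d}{dt}e(F(t))$, valid on $\mathrm{supp}\,\eta$ since there $|F'|\asymp A\neq 0$. Integrating by parts once — with no boundary terms, since $\eta\in C_0^{\infty}(1,2)$ is compactly supported in the open interval and the amplitude $\eta/F'$ stays supported in $\mathrm{supp}\,\eta$ — gives $\int \eta\, e(F)\, dt = \int (L\eta)(t)\, e(F(t))\, dt$, where $L\psi := -\frac{1}{2\pi i}\bigl(\psi/F'\bigr)'$. Iterating, $\int \eta\, e(F)\, dt = \int (L^{k}\eta)(t)\, e(F(t))\, dt$ for every $k$, so it suffices to prove $\|L^{k}\eta\|_{L^{1}}\ll_{k} A^{-k}$.

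The key quantitative input is that every derivative of $1/F'$ has size $O(A^{-1})$. Expanding $(1/F')^{(m)}$ by Leibniz's rule, one gets a finite sum of terms of the form $\bigl(\prod_{l} F^{(i_l)}\bigr)/(F')^{r+1}$ with $r$ factors in the numerator — and the relation (number of numerator factors) $=$ (denominator power) $-1$ is an invariant preserved by differentiation, as one checks directly. Since each $F^{(i)}\ll_i A$ and $|F'|\asymp A$, every such term is $\ll A^{r}/A^{r+1}=A^{-1}$, hence $|(1/F')^{(m)}|\ll_{m} A^{-1}$ for all $m\ge 0$. Feeding this into Leibniz's rule and inducting on $k$, I would show that $L^{k}\eta$ is smooth with $\mathrm{supp}(L^{k}\eta)\subseteq\mathrm{supp}\,\eta$ and $\|(L^{k}\eta)^{(j)}\|_{L^{\infty}}\ll_{j,k}\|\eta\|_{C^{j+k}}A^{-k}\ll_{j,k}A^{-k}$ for every $j$; in particular $\|L^{k}\eta\|_{L^{1}}\ll_{k}A^{-k}$, which finishes the proof.

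There is no serious obstacle here: the lemma is a routine instance of integration by parts / non-stationary phase. The only point needing a little care is the bookkeeping in the induction — verifying that each application of $L$ genuinely gains a full factor of $A^{-1}$ rather than leaking a power of $A$ through a high derivative of $F$, which is exactly what the balanced form of $(1/F')^{(m)}$ guarantees, and tracking that the constants, while growing with $k$ and $j$, never depend on $A$.
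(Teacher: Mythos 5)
Your proof is correct and takes essentially the same route as the paper, which simply normalizes $F=Af$ so that $f$ has bounded derivatives and then integrates by parts repeatedly, each step extracting a factor of $A^{-1}$. Your more explicit bookkeeping via the derivative structure of $1/F'$ is just the unnormalized version of the same non-stationary phase argument.
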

\begin{proof}
Write $F(t)=Af(t)$ and integrate by parts several times. Since the derivatives of $f$ are bounded, the result follows. 
\end{proof}

We are ready to give our first quantitative version of Dani's Theorem for a continuous orbit.

\begin{theorem}\label{implicit-dani-continuous}
Let $0<\delta<1/2$, $T>1$. For any $\xi\in X$ there exists a positive integer $j< \delta^{-O(1)}$  such $\xi h(t)$, $t$ in any subinterval of $[0,T]$ of length $T/j$, is $\delta$-equidistributed w.r.t. a  continuous algebraic measure in $X$. Moreover, it is the volume measure unless $y_T^{-1}< \delta^{-O(1)}$.
\end{theorem}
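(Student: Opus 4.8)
\emph{Reduction and geometric set-up.} By \eqref{lipschitz-controls-sobolev} it suffices to prove the statement for functions $f$ with $\|f\|_{W^{k}}\ll_{k}\delta^{-O(1)}$, and after subtracting $\int_X f\,d\mu_G$ we may also take $\int_X f\,d\mu_G=0$; as usual we may assume $T>\delta^{-c}$ for any fixed large $c$. Since $\{\Gamma g h(t):t\in[0,T]\}=\{\Gamma g_T h(t'):t'\in[0,T]\}$ and the flow is $\Gamma$-equivariant, it is harmless to assume the piece is in highest position and parametrised by \eqref{parametrization-piece-flat}: with $x(t)=\alpha_T+\tfrac{y_TW_T}{1\pm t/W_T}$ and $y(t)=\tfrac{y_T}{(1\pm t/W_T)^2}$ one has $\xi h(t)=h(x(t))a(y(t))+O(W_T^{-1})$. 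By Lemma \ref{FundDomainSubset} (and the remark following it) we have $|W_T|\gg T$, hence $W_T^{-1}\ll T^{-1}$, $y(t)\asymp y_T$ on all of $[0,T]$ (no cusp excursion), and $x'(t)=\mp y(t)$.

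\emph{Reduction of a sub-piece to a horizontal segment.} Take $j\asymp\delta^{-O(1)}$ with $j\gg\delta^{-1}$; then for any sub-interval $I=[t_0,t_0+T/j]$, writing $y_*:=\E_{t\in I}y(t)\asymp y_T$ and $L:=\int_I y(t)\,dt=(T/j)\,y_*$ (the $x$-extent of the sub-piece), we have $|y(t)/y_*-1|\ll (T/j)/|W_T|<\delta$ on $I$, so $f(\xi h(t))=f(h(x(t))a(y_*))+O(\delta\|f\|_{\mathrm{Lip}})$. Changing variables $u=x(t)$ gives
\[
\E_{t\in I} f(\xi h(t))=\E_{u\in[x(t_0),x(t_0)+L]} f(h(u)a(y_*))+O(\delta\|f\|_{\mathrm{Lip}}).
\]
Expanding in Fourier series $f(h(u)a(y_*))=\sum_m\widehat f(m,y_*)e(mu)$ with $\widehat f(m,y)=\int_0^1 f(h(v)a(y))e(-mv)\,dv$, and using $\E_{u\in[a,a+L]}e(mu)=O(\min(1,1/|m|L))$ for $m\neq0$, we obtain
\[
\E_{t\in I} f(\xi h(t))=\widehat f(0,y_*)+O\Big(\textstyle\sum_{m\neq0}|\widehat f(m,y_*)|\min(1,\tfrac1{|m|L})\Big)+O(\delta\|f\|_{\mathrm{Lip}}),
\]
where $\widehat f(0,y_*)=\int_X f\,d\mu_{y_*}$ and $\mu_{y_*}$ is the (non-discrete, algebraic) uniform probability measure on the closed horocycle of period $y_*^{-1}$. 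Now apply Proposition \ref{period-bound-proposition} with $q=1$ (and its remark, replacing $y^{-\theta}$ by $m^\theta$): $\widehat f(0,y_*)\ll y_*^{1/2-\epsilon}\|f\|_{W^6}$ and $\sum_{m}|\widehat f(m,y_*)|\,|m|^{-1}\ll y_*^{1/2-\theta-\epsilon}\|f\|_{W^{6+b}}$, while $\sum_{1\le|m|<1/L}|\widehat f(m,y_*)|\ll y_*^{1/2}L^{-1-\theta-\epsilon}\|f\|_{W^{6}}$.

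\emph{The two cases.} If $y_T<\delta^{C}$ for a suitably large fixed $C$, then $y_*\asymp y_T$ is tiny, so $\widehat f(0,y_*)\ll\delta^{C(1/2-\epsilon)}\delta^{-O(1)}<\delta\|f\|_{\mathrm{Lip}}$; thus $\int_X f\,d\mu_{y_*}$ and $\int_X f\,d\mu_G=0$ differ by $O(\delta\|f\|)$, and the $m\neq0$ contribution is bounded by a fixed positive power of $y_*$ times $\delta^{-O(1)}$, hence $<\delta\|f\|$ once $T>\delta^{-O(1)}$. So the sub-piece is $O(\delta)$-equidistributed w.r.t. $\mu_G$; here $y_T^{-1}>\delta^{-C}$ is large, as the ``moreover'' clause requires. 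If instead $y_T\ge\delta^{C}$, i.e. $y_T^{-1}<\delta^{-C}\le\delta^{-O(1)}$, then since $Ty_T=T/y_T^{-1}\gg\delta^{-2}$ for $T>\delta^{-c}$ with $c$ large, we may choose $j\asymp\delta^{-O(1)}$ with simultaneously $j\gg\delta^{-1}$ and $L=(T/j)y_*\gg\delta^{-1}$; then $\min(1,1/|m|L)=1/|m|L$ for all $m\ge1$, so the $m\neq0$ sum is $\ll L^{-1}y_*^{1/2-\theta-\epsilon}\|f\|_{W^{6+b}}\ll\delta\|f\|$, and $\E_{t\in I}f(\xi h(t))=\int_X f\,d\mu_{y_*}+O(\delta\|f\|_{\mathrm{Lip}})$, i.e. $O(\delta)$-equidistribution w.r.t. the closed-horocycle measure $\mu_{y_*}$. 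In either case the target measure is non-discrete and algebraic and $j\asymp\delta^{-O(1)}$; rescaling $\delta$ gives the stated bound. Finally, the restriction $g\in U\cap D_X$ used in \eqref{parametrization-piece-flat} is removed by a routine continuity argument, since both $\E_{t\in I}f(\xi h(t))$ and $y_T(\xi)$ depend continuously on $\xi$.

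The genuinely delicate point is already isolated in Lemmas \ref{fundamental} and \ref{FundDomainSubset}: that after passing to highest position the piece is uniformly close to a piece of the closed horocycle of period $y_T^{-1}$ with no cusp excursions. Granted that, the only thing to watch is that $j$ can be chosen $<\delta^{-O(1)}$ to satisfy the two competing demands ($j\gg\delta^{-1}$ for near-constant height, and, when the period is bounded, $j\ll\delta\,T y_T$ so that a sub-piece still wraps $\gg\delta^{-1}$ times), which is possible precisely because $T$ is much larger than any power of $\delta^{-1}$; the spectral input is then the off-the-shelf bound of Proposition \ref{period-bound-proposition}.
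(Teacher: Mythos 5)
Your plan follows the same skeleton as the paper --- pass to highest position via Lemmas \ref{fundamental}--\ref{FundDomainSubset}, use the parametrisation \eqref{parametrization-piece-flat}, Fourier-expand in the cusp, and invoke Proposition \ref{period-bound-proposition} --- so the spirit is right. However there is a genuine gap in the geometric set-up.

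\textbf{The cusp excursion has not been excluded.} You write that Lemma \ref{FundDomainSubset} gives $|W_T|\gg T$ and then conclude ``$y(t)\asymp y_T$ on all of $[0,T]$ (no cusp excursion)''. That inference is not valid. Lemma \ref{FundDomainSubset} only gives $|W_T|\ge c\,T$ for an \emph{absolute} constant $c>0$, and the sharp value of this constant is $c=1/2$: requiring $g_T$ to be the lowest point of the length-$T$ piece forces $y(T)\ge y(0)$, i.e.\ $(T-W_T)^2\le W_T^2$, which is exactly $T\le 2|W_T|$, with equality attainable. When $|W_T|\in[T/2,T]$ the orbit passes through the apex of the full horocycle at $t=\mp W_T\in[0,T]$, where the actual height is $R=y_T(1+W_T^2)\asymp y_T T^2\gg T$; and on the interval where $|1\pm t/W_T|<\delta$ (which has length $\asymp\delta|W_T|\asymp\delta T$, typically much longer than $T/j$ for $j\ge\delta^{-2}$) the height $y(t)$ is nowhere near $y_T$. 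The error term in \eqref{parametrization-piece-flat} is $O\!\left(W_T^{-1}/(1\pm t/W_T)\right)$, not $O(W_T^{-1})$, and it also blows up there. Consequently your estimate $|y(t)/y_*-1|\ll (T/j)/|W_T|<\delta$ on $I$ fails for subintervals $I$ overlapping the excursion, and the reduction to a constant-height Fourier expansion breaks down precisely in the range that is geometrically most delicate. (Separately, even in the good region $j\gg\delta^{-1}$ is too small: with $|W_T|$ only $\asymp T$ and $|1\pm t/W_T|\gtrsim\delta$ one needs $j\gg\delta^{-2}$ to keep the logarithmic variation of $y$ below $\delta$.)

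The paper addresses exactly this point by arguing by contradiction and inserting a bump $\eta$ with support \emph{outside} $|1\pm tTW_T^{-1}|<\delta$, and by allowing the comparison height $y^*$ to be \emph{any} value with $y^*>y_T/\delta^2$ (so it can be the actual --- possibly much larger --- height on the chosen sub-window). The rest of its argument (Fourier expansion, stationary-phase-free integration by parts via Lemma \ref{integration-parts}, and Proposition \ref{period-bound-proposition}) is the same as yours. Your write-up can be repaired by incorporating the same excision of the excursion: restrict to a smooth cutoff supported where $|1\pm t/W_T|\ge\delta$, note that the excised set has $t$-measure $O(\delta T)$ and hence contributes $O(\delta)$ to each $\E_{t\in I}$ once $j\gtrsim\delta^{-2}$, and let $y_*$ depend on the sub-window. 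As written, though, the claim ``no cusp excursion'' is false and the proof is incomplete.
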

\begin{remark}
Notice that the use of the metric $\widehat{d_X}$ in the definition of $\delta$-equidistribution implies that Theorem \ref{implicit-dani-continuous} says nothing about the part of the horocycle that is near the cusp; the same can be said about any other result in sections 4 and 5.
\end{remark}
\begin{proof}
We can assume that $\delta^{-1}$ is not larger than a small power of $T$, because otherwise we could choose $j=T/\delta$ and then for any $t_0\in \R$, $\xi h(t)$, $t\in [t_0,t_0+\delta]$ would be $\delta$-equidistributed w.r.t. the algebraic measure carried by the point $\xi h(t_0)$.

If $y_T^{-1}>\delta^{-O(1)}$ we will prove the result with $j=1$; then, in that case we can assume that $|1\pm t/W_T|\gg \delta$ since this only deletes at most a proportion $\delta$ of the orbit. On the other hand, if for all $t$ in a certain interval $J$ we have $y_T/|1\pm t/W_T|^2\gg \delta^{-1}$ then by (\ref{parametrization-piece-flat}) we deduce that $\widehat d_X(\xi h(t),\xi h(t_0))\ll \delta $ for any fixed $t_0\in J$, and then $\xi h(t), t\in J$ will be $\delta$-equidistributed w.r.t. the algebraic measure carried by $\xi h(t_0)$; this implies that if $y_T^{-1}\le \delta^{-O(1)}$ we can assume that $1/|1\pm t/W_T|\ll \delta^{-O(1)}$. So in any case we can assume we are in the range $|1\pm t/W_T|^{-1}<\delta^{-O(1)}$.

Now, if the Theorem is false we can assume  that the Lipschitz norm of $f$ is 1 and that it has $\delta^{-O(1)}$-bounded Sobolev norms. Because of \eqref{parametrization-piece-flat}, there exists $y_T\le y^*< y_T\delta^{-O(1)}$ and $\eta\in C_0^{\infty}(\R)$ with $\|\eta\|_{W^j}<\delta^{-O(j)}$ and support an interval $I$ outside of $1/|1\pm t/W_T|<\delta^{-O(1)}$ such that
\[
|\mathbb E_{t\in I} \eta(\frac{t}{T})f_0(h(\alpha_T + \frac{y_T W_T}{1 \pm t/W_T}) a(y^*)) |\gg \delta,
\]
where $f_0$ equals $f-\int_0^1 f(h(x)a(y^*))\, dx$ in the case $y_T^{-1}\ll \delta^{-O(1)}$ and $f-\int_X f \, d\mu_G$ otherwise. Notice that this implies, by Proposition \ref{period-bound-proposition} in the second case, that $\int_0^1 f_0(h(t)a(y^*))\, dt\ll \delta^2 $.
But then, expanding in Fourier series $f_0(h(x)a(y))=\sum_m \tilde f_0(m,y) e(mx)$ we get
\[
\sum_{m\neq  0} |\tilde f_0(m,y^*)||\int \eta(t) e( \frac{my_TW_T}{1\pm t TW_T^{-1}})|\gg \delta.
\]
The derivative of the phase in the exponential is of size $|m|y_T T\gg |m|$, and then integrating by parts (Lemma \ref{integration-parts}) and using Proposition \ref{period-bound-proposition} we have for any large $b>0$ that
\[
\delta^{-O(1)}\sum_{m=1}^{\infty} y_T^{1/2} \tau(m)m^{\theta} (m y_T T)^{-b}\gg_b\delta.
\]
This implies that
$
\delta^{-O(1)}T^{-1/2}\gg \delta^{-O(1)} y_T^{1/2}(y_T T)^{-b}\gg_b \delta,
$
which is a contradiction.
\end{proof}

\emph{Remark:}  This result is related to Theorem 1 in \cite{strombergsson-integral} for the particular case $\Gamma=SL(2,\Z)$. It could also be proven by using quantitative mixing, as done in \cite{venkatesh} for $\Gamma\backslash G$ compact but taking care of the position of the orbit.

When we consider $\Gamma g$ fixed, we can make the result more explicit.
\begin{theorem}[Effective Dani, continuous orbit]\label{dani-continuous}
Let $\xi\in X$ fixed. The  continuous algebraic measure appearing in Theorem \ref{implicit-dani-continuous} is the volume measure unless there exists an integer $q< \delta^{-O(1)}$ such that 
\[
\|q\alpha\| < \delta^{-O(1)}T^{-1}.
\] 
\end{theorem}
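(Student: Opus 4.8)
The plan is to obtain Theorem~\ref{dani-continuous} as an immediate corollary of the two results that precede it: the implicit effective Dani theorem (Theorem~\ref{implicit-dani-continuous}) and the translation of the fundamental period into a diophantine statement (Lemma~\ref{period-made-explicit-continuous}). Theorem~\ref{implicit-dani-continuous} already manufactures, for each $0<\delta<1/2$ and $T>1$, the non-discrete algebraic measure in question and, by its ``moreover'' clause, guarantees that it equals $d\mu_G$ unless $y_T^{-1}<\delta^{-O(1)}$. So the only remaining task is to re-express the geometric condition $y_T^{-1}<\delta^{-O(1)}$ in terms of $\alpha$, and this is precisely Lemma~\ref{period-made-explicit-continuous}.

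In detail: first I would fix a representative $g$ of the (fixed) point $\xi=\Gamma g$ whose entries are bounded by a constant depending only on $\xi$; then, as soon as $\delta$ is smaller than that constant, $g$ has entries bounded by $\delta^{-1}$, so the hypotheses of Lemma~\ref{period-made-explicit-continuous} are met with this $g$ and $\alpha=\alpha(g)$ (when $\delta$ is not this small the statement is vacuous, by the standing convention that one may assume $(\delta^{-1})^c<T$). Now suppose the algebraic measure produced by Theorem~\ref{implicit-dani-continuous} is not $d\mu_G$; that theorem then forces $y_T^{-1}<\delta^{-O(1)}$, and Lemma~\ref{period-made-explicit-continuous} --- which itself is read off from the explicit formula for $y_T$ in Lemma~\ref{fundamental}, with the integer $q$ essentially the approximation denominator $\kappa_U(\alpha)$ for $U=(T/R)^{1/2}$ --- provides an integer $q<\delta^{-O(1)}$ with $\|q\alpha\|<\delta^{-O(1)}T^{-1}$. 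This is exactly the desired conclusion.

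I do not expect a genuine obstacle here, since all the analytic content has already been spent: in Theorem~\ref{implicit-dani-continuous} (Fourier expansion of $f$ in the cusp, then integration by parts against the nonstationary phase, invoking Proposition~\ref{period-bound-proposition}), and in Lemma~\ref{fundamental} (the arithmetic of the $SL_2(\Z)$-action on pieces of horocycles). If anything is delicate it is purely bookkeeping: chaining the two ``$\delta^{-O(1)}$'' statements so that the implied exponents and constants match up, and making sure the representative $g$ used to define $y_T$ is the same one for which $\alpha$ is measured in the statement.
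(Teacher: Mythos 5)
Your proof is correct and follows exactly the paper's own route: the paper proves Theorem~\ref{dani-continuous} in one line by combining Theorem~\ref{implicit-dani-continuous} (which pins the non-volume alternative to the condition $y_T^{-1}<\delta^{-O(1)}$) with Lemma~\ref{period-made-explicit-continuous} (which converts that condition into the diophantine statement on $\alpha$). Your added bookkeeping — fixing a bounded representative $g$ of $\xi$ so the lemma's hypotheses hold, and noting the standing convention $(\delta^{-1})^c<T$ handles the small-$T$ regime — is exactly the implicit content of that one-line proof.
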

\begin{proof}
This follows from Theorem \ref{implicit-dani-continuous}, Lemma \ref{period-made-explicit-continuous}  and the remark before it.
\end{proof}
\begin{remark}
Here and hereafter whenever we speak of ``algebraic measure'' it will mean one of the three algebraic measures appearing in Dani's Theorem (described in section \ref{horocycle-approximation}).
\end{remark}

As a corollary of this result we obtain Dani's Theorem for the continuous flow: if $\alpha$ is irrational, for any $0<\delta<1/2$ we can find $T=T_{\delta}$ large enough so that no $q$ exists as in the statement, and then $\Gamma g h(t)$, $t\in [0,T]$ is $\delta$-equidistributed w.r.t. the volume measure. 

Next we begin our study of the discrete orbit $\xi h(s)^n$, $n=0,1,\ldots N$. Our aim in the rest of this section is to understand, in terms of $\xi$ when any large piece of this orbit  is $\delta$-equidistributed w.r.t. a  continuous algebraic measure. We will be able to do it for not very large $s$; essentially in the range $s<(sN)^{1/5}$.

In the proofs we shall use the following version of the Stationary Phase principle (a more precise one can be found in \cite{hormander}):
\begin{lemma}[Stationary Phase]\label{stationary-phase}
Let $\eta\in C_0^{\infty}((1,2))$. Let $A>1$, $F\in C^{\infty}$ with $F''\asymp A$ and $F^{(j)}\ll_j A$ for $j\ge 3$ in the interval $(1/2,4)$. Then, we have
\[
\int \eta(t) e(F(t)) \, dt = \eta^*(t_F) e(F(t_F))A^{-1/2} + O(A^{-\frac 1{|o(1)|}})
\]
where $t_F$ is the only point where $F'$ vanishes, and $\eta^*\in C_0^{\infty}((1/2,4))$ depends just on $A$ and $F''$, and $\eta^*$ and its derivatives are bounded (in terms of $\eta$ only).
\end{lemma}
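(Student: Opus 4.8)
The plan is to run the classical one-dimensional stationary-phase argument, being careful that every implied constant is independent of $A$. Since $F''\asymp A>0$ on $(1/2,4)$, there $F'$ is strictly increasing, so its zero $t_F$ is unique and $|F'(t)|\asymp A\,|t-t_F|$ on any subinterval of $(1/2,4)$ containing $t_F$. First I would insert a smooth partition of unity $1=\rho_0(t)+\rho_1(t)$ adapted to $t_F$: $\rho_0$ supported in $\{|t-t_F|<c_0\}$ and $\rho_1$ in $\{|t-t_F|>c_0/2\}$, where $c_0<1/2$ is a small \emph{absolute} constant, so that $\rho_0,\rho_1$ have derivatives bounded by absolute constants. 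Split the integral as the $\rho_0$-piece plus the $\rho_1$-piece.

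On $\supp(\eta\rho_1)\subset\supp\eta\subset(1,2)$ one checks (separating the cases $t_F\in(1/2,4)$ and $t_F\notin(1/2,4)$, the latter forcing $\eta\rho_0\equiv0$) that $|F'|\gg A$ while $F^{(j)}\ll_j A$; hence repeated integration by parts, i.e.\ Lemma \ref{integration-parts}, gives $\int\eta(t)\rho_1(t)e(F(t))\,dt\ll A^{-1/|o(1)|}$, which is absorbed into the error. If $t_F$ lies at distance $>c_0$ from $\supp\eta$ then $\eta\rho_0\equiv0$ and we are finished on taking $\eta^*(t):=e(1/8)\,\eta(t)\,(A/F''(t))^{1/2}$, which vanishes at $t_F$ since $\eta(t_F)=0$. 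So from now on $t_F$ is within $c_0$ of $\supp\eta$, hence $t_F\in(1-c_0,2+c_0)\subset(1/2,4)$ and $F''\asymp A$ on the whole segment joining $t_F$ to any point of $\supp(\eta\rho_0)$.

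The $\rho_0$-piece is the substance, handled by a Morse-type change of variables. Write $F(t)=F(t_F)+\tfrac12(t-t_F)^2G(t)$ with $G(t)=2\int_0^1(1-s)F''\big(t_F+s(t-t_F)\big)\,ds$, so $G\asymp A$ and $G^{(j)}\ll_j A$ on $\supp(\eta\rho_0)$; set $u=\sigma(t-t_F)\sqrt{G(t)/A}$ with the sign $\sigma$ chosen so $u$ increases through $0$. Then $t\mapsto u$ is $C^\infty$ near $t_F$ with $u'(t_F)=\sqrt{F''(t_F)/A}\asymp1$ and all derivatives bounded by absolute constants, so its inverse $t=t(u)$ has bounded derivatives as well, and $F(t(u))=F(t_F)+\tfrac A2u^2$. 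Pulling back,
\[
\int\eta(t)\rho_0(t)e(F(t))\,dt=e(F(t_F))\int\tilde\eta(u)\,e\!\left(\tfrac A2u^2\right)du,\qquad \tilde\eta(u):=\eta(t(u))\,\rho_0(t(u))\,t'(u),
\]
where $\tilde\eta\in C_0^\infty$ has all derivatives bounded in terms of $\eta$. For this Fresnel integral, $e(\tfrac A2u^2)$ has Fourier transform $A^{-1/2}e(1/8)\,e(-\xi^2/2A)$, so by Plancherel
\[
\int\tilde\eta(u)\,e\!\left(\tfrac A2u^2\right)du=A^{-1/2}e(1/8)\int\widehat{\tilde\eta}(\xi)\,e(-\xi^2/2A)\,d\xi=A^{-1/2}e(1/8)\big(\tilde\eta(0)+O(A^{-1})\big),
\]
using that $\widehat{\tilde\eta}$ is Schwartz and $|e(-\xi^2/2A)-1|\le\min(2,\pi\xi^2/A)$; expanding $e(-\xi^2/2A)$ further improves the error to $O_N(A^{-N})$ modulo finitely many lower-order terms. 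Since $\rho_0\equiv1$ near $t_F$ and $t'(0)=\sqrt{A/F''(t_F)}$, we get $\tilde\eta(0)=\eta(t_F)\sqrt{A/F''(t_F)}$, so the main term is exactly $\eta^*(t_F)\,e(F(t_F))\,A^{-1/2}$ with $\eta^*$ as above; this $\eta^*$ is supported in $\supp\eta\subset(1/2,4)$, depends only on $\eta$, $A$ and $F''$, and — because $A/F''\asymp1$ with $\tfrac{d}{dt}(A/F'')=-AF'''/(F'')^2=O(1)$ and so on — has all derivatives bounded in terms of $\eta$ only. Adding the $\rho_1$-contribution proves the lemma, the lower-order Fresnel terms going into the error.

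The main — though routine — obstacle is performing the Morse change of variables with constants uniform in $A$; this is exactly what the hypotheses $F''\asymp A$ and $F^{(j)}\ll_j A$ ($j\ge3$) are for, since they say that $F/A$ is $C^\infty$ with $O(1)$ derivatives and non-degenerate critical point, so the usual Morse lemma applies with absolute bounds. The only other delicate point is a partial stationary point — $t_F$ near an endpoint of $\supp\eta$ — but the partition $\rho_0,\rho_1$ disposes of it automatically: $\rho_0$ is centred at $t_F$, so the $\rho_0$-piece is always either a genuine Fresnel integral about $u=0$ or identically zero, and in the latter case $\eta(t_F)=0$ makes $\eta^*(t_F)=0$ as required.
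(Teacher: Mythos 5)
There is a genuine gap in your error analysis, and it comes from being too explicit about $\eta^*$. You define $\eta^*(t)=e(1/8)\,\eta(t)\sqrt{A/F''(t)}$, so that $\eta^*(t_F)=e(1/8)\,\tilde\eta(0)$, and this captures only the \emph{leading} Fresnel coefficient. The Plancherel computation then gives
\[
\int\tilde\eta(u)\,e\!\left(\tfrac A2u^2\right)du = A^{-1/2}e(1/8)\big(\tilde\eta(0)+c_1\tilde\eta''(0)A^{-1}+c_2\tilde\eta^{(4)}(0)A^{-2}+\cdots\big),
\]
so with your $\eta^*$ the discrepancy from $\eta^*(t_F)e(F(t_F))A^{-1/2}$ is of order $A^{-3/2}$, not $O(A^{-1/|o(1)|})$. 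The remark that ``expanding further improves the error to $O_N(A^{-N})$ modulo finitely many lower-order terms, the lower-order Fresnel terms going into the error'' is exactly where this breaks: those lower-order terms are each $\asymp A^{-1/2-k}$ for fixed $k\ge 1$ (generically nonzero, since nothing forces $\tilde\eta^{(2k)}(0)=0$), and a single such term is not $O_N(A^{-N})$ for $N>k+1/2$. They cannot be absorbed into a superpolynomially small error; they have to be absorbed into $\eta^*$.

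The paper avoids this entirely by never expanding the oscillatory integral asymptotically. After factoring out $e(F(t_F))$ it uses a dyadic (Littlewood--Paley) partition of the $s=t-t_F$ variable at scale $A^{-1/2}$, discards the scales $|s|\gtrsim A^{-1/2+1/8}$ by integration by parts (that is where the superpolynomial error comes from, via $C_1^{-1/|o(1)|}$ with $C_1=A^{1/8}$), and simply \emph{defines} $\eta^*(t)$ to be $A^{1/2}$ times the remaining truncated oscillatory integral $\int_{|s|\le A^{-3/8}}\eta(t+s)e(J(t,s))\,ds$, written as a sum of dyadic pieces $\eta_C(t)$. That $\eta^*$ depends only on $\eta$, $A$, and $F''$ (through $J$), lies in $C_0^\infty((1/2,4))$, and — this is the content of the repeated integration by parts in the proof of the bound $(\partial_t)^j\eta_C\ll_j C^{-1}$ — has all $t$-derivatives bounded uniformly in $A$. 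In other words, $\eta^*$ automatically encodes the full asymptotic expansion without one ever writing it down.

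Your Morse-lemma/Fresnel route can be repaired along the same lines: either (a) keep the change of variables but define $\eta^*(t)$ so that $\eta^*(t_F)$ equals $e(1/8)$ times the first $\lceil 1/|o(1)|\rceil$ Fresnel coefficients $\sum_k c_k\tilde\eta^{(2k)}(0)A^{-k}$, checking that this defines a smooth function of $t$ (through $t_F\mapsto t$) with $A$-uniform derivative bounds because each term carries a compensating $A^{-k}$; or (b), cleaner and essentially what the paper does, do not invoke the Fresnel expansion at all and instead declare $\eta^*(t)$ to be $A^{1/2}$ times the near-stationary part of the oscillatory integral, proving its regularity by integration by parts. As written, the claim that the subleading terms go into the error is false, and this is the substantive gap; the rest of the argument (the $\rho_0/\rho_1$ split, the $A$-uniform Morse lemma, the treatment of $t_F$ near or outside $\operatorname{supp}\eta$) is sound.
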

\begin{proof} 
First, let us write
\[
\int \eta(t) e(F(t)) \, dt =e(F(t_F))\int \eta(t_F+s) e(J(t_F,s)) \, ds
\]
where
\[
J(t,s)=F(t+s)-F(t)-F'(t)s=\int_0^s \int_0^u F''(t+v) \, dv \, du.
\]
Let us choose $C_1= A^{1/8}$. For some $\psi\in C_0^{\infty}((-2,-1)\cup(1,2))$ we can write
\[
1(s)=\sum_{j=-\infty}^{\infty} \psi(\frac{s}{2^j A^{-1/2}}) =\psi_0(\frac{s}{A^{-1/2}})+\psi_1(\frac{s}{C_1 A^{-1/2}}) + \sum_{1<2^j<C_1} \psi(\frac{s}{2^j A^{-1/2}}),
\]
with $\psi_0$ and $\psi_1$ coming from the sum over $2^j\le 1$ and $2^j\ge C_1$ respectively.
Decomposing the integral accordingly, one can easily show that the part corresponding to $\psi_1$ is bounded by $C_1^{-1/|o(1)|}<A^{-1/|o(1)|}$, and the rest can be expressed as
\[
A^{-1/2} e(F(t_F)) \eta^*(t_F)
\]
with
\[
\eta^*(t)= \eta_0^*(t)+\sum_{1<2^j<C_1} \eta_{2^j}(t),
\]
where
\[
\eta_C(t)= C\int \psi_C(t,s)e(C^2J_C(t,s))\, ds.
\]
with $\psi_C(t,s)=\psi(s)\eta(t+CA^{-1/2}s)$ and $J_C(t,s)=C^{-2}J(t,CA^{-1/2}s)$, and $\eta_0^*$ is defined as $\eta_{2^0}$ but changing $\psi$ by $\psi_0$.
The first thing to note is that the support of $\eta^*$ is contained in $(1/2,4)$ and the second is that $\eta^*$ just depends on $F''$, $A$ and $\eta$. It is easy to show that $\eta^*$ and its derivatives are bounded. For the rest, we have that the support of $\psi_C(t,\cdot)$ is contained in $(1,2)$; moreover
$
\frac{\partial}{\partial s} J_C \asymp 1,
$
and the partial derivatives of both $\psi_C$ and $J_C$ in $t$ and $s$ are uniformly bounded. Then, by integrating by parts several times, one shows that
$
(\frac{\partial}{\partial t})^j \eta_C (t) \ll_j C^{-1}
$
and then
$
(\frac{\partial}{\partial t})^j \eta^* (t) \ll_j 1
$
\end{proof}
In order to prove a quantitative version of Dani's Theorem for the discrete orbit $\Gamma gh(s)^n$, $n=0,1,\ldots N$, we split the analysis in three cases: when the piece of orbit is near to a closed horocycle ($\theta_T$ and $y_T^{-1}$ small, $T=sN$), when it is far from any closed horocycle ($\theta_T$ and $y_T^{-1}$ large), and the intermediate case.

We begin by the ``near'' case. Here, one can parametrize the orbit essentially as $\Gamma h(p(n))a(y_T)$ with $p(n)$ a quadratic polynomial. The following allows us to handle the distribution for such a sequence.

\begin{proposition}[Near to a closed horocycle]\label{near-to-closed-horo}
Let $0<\delta<1/2$. The sequence $s_n=\alpha+\beta n + \omega n^2+iy$, $n\le N$ is $\delta$-equidistributed with respect to the algebraic measure on the closed horocycle of period $y^{-1}$ unless either $\min(y^2/N|\omega|,Ny)<\delta^{-O(1)}$ or there exists a natural number $q$ smaller than $(\delta/\tau(q_2))^{-O(1)}(1+y^{-1/2})$ such that 
\begin{equation}\label{HeckeCondition}
\|q\alpha\|+N\|q\beta\|+N^2q|\omega|<y^{\frac 12}(\delta/\tau(q_2))^{-O(1)},
\end{equation}
where $q_2$ is the denominator in the expression as reduced fraction of $[q\beta]/q$.
\end{proposition}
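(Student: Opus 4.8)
The plan is to expand the test function in a Fourier series along the closed horocycle of period $y^{-1}$, reduce the claim to a quantitative bound for a quadratic exponential sum weighted by the horocycle Fourier coefficients, and then control that sum by van der Corput differencing together with Proposition \ref{period-bound-proposition}.

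\textbf{Setup.} After normalizing $\|f\|_{\Lip}=1$ and replacing $f$ by $f_\delta$ with $\delta^{-O(1)}$-bounded Sobolev norms via \eqref{lipschitz-controls-sobolev}, I write the $1$-periodic closed-horocycle profile $x\mapsto f(\Gamma h(x)a(y))=\sum_m \tilde f(m,y)e(mx)$ with $\tilde f(m,y)=\int_0^1 f(h(t)a(y))e(-mt)\,dt$. The algebraic measure on the closed horocycle of period $y^{-1}$ assigns $f$ the value $\tilde f(0,y)$, so that
\[
\E_{n\le N} f(\Gamma h(\alpha+\beta n+\omega n^2)a(y))-\tilde f(0,y)=\sum_{m\ne 0}\tilde f(m,y)\,e(m\alpha)\,T_m,\qquad T_m:=\E_{n\le N}e(m\beta n+m\omega n^2).
\]
Proposition \ref{period-bound-proposition} (with $q=1$) gives $\tilde f(m,y)\ll \tau(m)y^{1/2-\theta}(1+|my|)^{-b}\|f\|_{W^{6+b}}$ and Parseval \eqref{parseval-smooth} gives $\sum_m|\tilde f(m,y)|^2\le\|f\|_\infty^2\le 1$; using the rapid decay and $|T_m|\le 1$ the tail $|m|>M:=y^{-1}\delta^{-O(1)}$ is negligible (here one uses $(\delta^{-1})^c<$ the relevant scale and takes $b$ large). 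By Cauchy--Schwarz over $|m|\le M$ it then suffices to show that off the exceptional set $\sum_{0<|m|\le M}|T_m|^2$ is a suitably small power of $\delta$; keeping in addition the pointwise factor $y^{1/2-\theta}$ and the decay $(1+|my|)^{-b}$ in $\tilde f(m,y)$ is what will eventually produce the sharp $y^{1/2}$ on the right of \eqref{HeckeCondition}.

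\textbf{Quadratic Weyl sum.} One step of van der Corput/Weyl differencing in $n$ kills the constant and linear coefficients, leaving for $1\le H\le N$
\[
|T_m|^2\ll \frac1H+\frac1{HN}\sum_{0<h\le H}\min\!\Big(N,\frac1{\|2m\omega h\|}\Big).
\]
Summing over $0<|m|\le M$, the term $M/H$ is small precisely when $Ny\ge\delta^{-O(1)}$ (otherwise we are in case (A)); for the main term I reindex by $k=2mh$, picking up a divisor weight $\tau(k)$ over $|k|\ll MN$, and reduce to estimating $\sum_{0<|k|\ll MN}\tau(k)\min(N,\|\omega k\|^{-1})$. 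A standard three-range split forces this to be large only when $\omega$ is small --- when $y^2/(N\omega)<\delta^{-O(1)}$ fails (again case (A)) we may assume $\omega\le \delta^{O(1)}y^2/N$, so $\|\omega k\|=\omega k$ throughout the range --- and admits a good rational approximation: there is $Q\ll(1+y^{-1/2})\delta^{-O(1)}$ with $N^2Q\omega$ small, controlled against the divisor losses.

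\textbf{From $\omega$ to $\beta$ and $\alpha$.} With $\omega$ pinned down, grouping $n$ into residue classes modulo $Q$ makes $m\omega n^2$ essentially periodic and the sum linearizes (this is the step that introduces the divisor factor $\tau(q_2)$, $q_2$ being the reduced denominator of $[q\beta]/q$). A second, genuinely one-dimensional estimate for $\sum_{n\le N}e(m\beta n)$, weighted against the rapidly decaying $\tilde f(m,y)$, forces $N\|q\beta\|$ small; tracking the constant-term phase $e(m\alpha)$ against the resulting near-rational structure yields $\|q\alpha\|$ small as well, and assembling these with the size of $M$ gives exactly condition \eqref{HeckeCondition} with the stated range for $q$.

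\textbf{Main obstacle.} The analytic inputs --- the van der Corput bound, the divisor sum, the one-dimensional Weyl step --- are routine. The delicate part is the arithmetic bookkeeping in the last two paragraphs: merging the modulus $Q$ controlling $\omega$ with the modulus controlling $\beta$ into the single clean modulus $q$ of the statement, correctly accounting for the divisor losses $\tau(q_2)$ from the ranges of summation in the differencing and for the $y^{1/2-\theta}$ loss from the horocycle Fourier coefficients, and isolating the genuinely degenerate regimes ($Ny$ or $y^2/(N\omega)$ not large) into case (A) rather than (B).
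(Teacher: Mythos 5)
Your opening paragraph is fine: expanding $f$ in a Fourier series along the closed horocycle and identifying the algebraic measure with $\tilde f(0,y)$ is exactly how the paper also begins, and Proposition~\ref{period-bound-proposition} plus \eqref{parseval-smooth} justify the truncation $|m|\le M\asymp y^{-1}\delta^{-O(1)}$. But from there you diverge from the paper's proof in a way that loses the essential automorphic input, and the route you sketch would not recover the statement. The paper does \emph{not} attack the quadratic Weyl sum $T_m$ by van der Corput: it first Dirichlet-approximates $\beta$ by $a_2/q_2$ with $q_2\le M$, splits $n$ into residue classes mod $q_2$ (which linearizes the quadratic and replaces the inner $n$-sum by a smooth integral $I(m)$), passes to the spectral decomposition $f=\int f_\pi$, uses the multiplicativity \eqref{multiplicativity} to peel off the $q_2$-part of the Hecke eigenvalue, and then invokes the Wilton-type bounds \eqref{bound-fourier-series-discrete} and \eqref{WiltonCont} for $\sum_m\lambda_\pi(m)m^{-1/2}e(mx)\psi(mu)$. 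That last bound gives $O(\tilde\lambda_\pi^{O(1)}\|\psi\|_{W^{O(1)}})$ \emph{uniformly in $x$}; this is the source of the clean $y^{1/2}$ (with no $\theta$) on the right of \eqref{HeckeCondition}, and of the fact that the resulting modulus $q=q^*d^*q_2$ lands at the scale $q\ll y^{-1/2}(\delta/\tau(q_2))^{-O(1)}$ rather than $y^{-1}$.

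Your proposal, by contrast, applies Cauchy--Schwarz (or a pointwise bound) plus classical van der Corput and divisor-sum estimates, and this is where the gaps are. (i) Cauchy--Schwarz with Parseval, $\sum_m|\tilde f||T_m|\le(\sum|T_m|^2)^{1/2}$, discards the phases $e(m\alpha)$, so the $\|q\alpha\|$ term in \eqref{HeckeCondition} cannot emerge from your argument; yet it must, since the closed horocycle $\beta=1/2,\ \omega=0,\ y\to 0$, $\alpha$ rational shows that the exceptional set genuinely depends on $\alpha$. (ii) The range of $m$ you work over is $|m|\le M\asymp y^{-1}\delta^{-O(1)}$, and after van der Corput you reindex $k=2mh$ up to $MN$; the resulting ``$\omega$ near rational with denominator $Q$'' conclusion can only force $Q\ll y^{-1}\delta^{-O(1)}$, which is the square of the range $q\ll y^{-1/2}(\delta/\tau(q_2))^{-O(1)}$ required by the statement and needed downstream (it is what feeds into Lemma~\ref{period-made-explicit-discrete}(ii) via Theorem~\ref{dani-discrete}). (iii) If you instead try to keep the pointwise Fourier-coefficient bound $\tilde f(m,y)\ll\tau(m)y^{1/2-\theta}(1+|my|)^{-b}$, summing trivially over $m\ll y^{-1}$ gives a factor $y^{-1/2-\theta}$, so again you end up with $y$-scales off by a square root and with an unconditional $\theta$ loss; the cancellation in $\sum_m\lambda_\pi(m)m^{-1/2}e(mx)\psi(mu)$ is exactly what repairs both, and a divisor-sum estimate for quadratic exponential sums is not a substitute for it. So while the broad frame (Fourier expand, truncate, detect obstructions to equidistribution as rational approximations) is right, the argument as written cannot produce the precise exceptional condition \eqref{HeckeCondition}; you need the Hecke-multiplicativity plus Wilton-bound mechanism, or some equivalent, to get the $y^{1/2}$ and $y^{-1/2}$ thresholds.
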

\begin{proof} 
 The result is trivial for $y>\delta^{-1}$, since then for any $x_1,x_2\in \R$ we have $|f(x_2+iy)-f(x_1+iy)|\le \delta\|f\|_{\Lip}$. Otherwise, let us suppose that $s_n, n\le N$ does not satisfy the condition in the statement and that both $y^2/N|\omega|$ and $Ny$ are larger than $\delta^{-c}$ for $c>1$ a large constant. Then
\[
|\mathbb E_{n\le N} f(\alpha+\beta n+\omega n^2+iy)\eta(n/N)|\gg \delta,
\]
for some $f$ with  $\int_0^1 f(x+iy)\, dx=0$, $\|f\|_{W^j}\ll_j \delta^{-j}$ and $\eta\in C_0^{\infty}((0,1))$ with $\|\eta^{(j)}\|_{L^{\infty}}\ll_j \delta^{-j}$. 
Take $M=y^{-1}\delta^{-\sqrt c}$; there exist $q_{2}\le M$ and $a_{2}$ coprime to $q_{2}$ such that
\[
\beta=\frac {a_{2}}{q_{2}} +\epsilon  \hspace{20pt}  |\epsilon|\le\frac{1}{q_{2}M}.
\]
By splitting into arithmetic progressions modulo $q_{2}$ we have
\begin{equation}
\delta^{-O(1)}|\E _{b\le q_{2}} \, \E_{j\le N/q_2}  f(\alpha+\frac{a_{2}b}{q_{2}}+ \epsilon q_{2} j+\omega q_{2}^2 j^2+iy)\eta(\frac{j}{N/q_{2}})| > 1.
\end{equation}
Since $\|F^{(j)}\|_{L^{\infty}}\ll_j \delta^j$, with $F(t)=f(\alpha+a_{2}b/q_{2}+ \epsilon q_{2} t+\omega q_{2}^2 t^2+iy)\eta(\frac{t}{N/q_{2}})$,  we can see (for instance applying Poisson Summation and integrating by parts) that it is possible to substitute the inner sum by the integral
\begin{equation}\label{Beginning}
\delta^{-O(1)}|\E _{b\le q_{2}} \, \int f(\alpha+\frac{a_{2}b}{q_{2}}+ \epsilon Nt+\omega  N^2 t^2+iy)\eta(t) \, dt| > 1.
\end{equation}
At this point, let us remark that from the beginning we could assume the support of $\eta$ to be at distance $\delta$ from the point $-\epsilon(2\omega N)^{-1}$.
By the spectral decomposition \eqref{spectral-decom-func} and the Fourier series expansion  (taking into account that the zero coefficient vanishes) we have
\[
\delta^{-O(1)}|\sum_{m\neq 0} \frac{\lambda_{\pi}(m)}{\sqrt{|m|}} e(m\alpha)W_f(|m|y) \E_{b\le q_{2}} e(\frac{ma_{2}b}{q_{2}}) I(m)|> 1
\]
for some $\pi$  with $\lambda_{\pi}\ll \delta^{-O(1)}$ and $f\in V_{\pi}$ with $\|f\|_{W^j}\ll_j \delta^{-O(j)}$ and $I(m)= \int \eta(t) e(mN(\epsilon t+\omega N t^2) ) \, dt$. Then
\[
\delta^{-O(1)}|\sum_{k\neq 0} \frac{\lambda_{\pi}(q_{2}k)}{\sqrt{|q_{2}k|}}e(kq_{2}\alpha)W_f(|kq_{2}|y) I(kq_2)|> 1.
\]
By the multiplicativity of the Hecke eigenvalues  
\begin{equation}\label{multiplicativity}
\lambda_{\pi}(ab)=\sum_{d\mid (a,b)} \mu(d)\lambda_{\pi}(a/d)\lambda_{\pi}(b/d)
\end{equation}
we have
\[
\delta^{-O(1)}\frac{|\lambda_{\pi}(q_{2}/d)|}{\sqrt{dq_{2}}}|\sum_{m\neq 0} \frac{\lambda_{\pi}(m)}{\sqrt{ |m|}}e(mdq_{2}\alpha)W_f(|mdq_2| y) I(mdq_{2})|>\frac{1}{\tau(q_{2})} 
\]
for some $d\mid q_{2}$.
By splitting the integral defining $I(x)$ smoothly into $\delta^{-1}$ integrals and applying Lemma \ref{integration-parts}, taking into account our previous remark on the support of $\eta$, we have
\[
\delta^{-O(1)}\frac{|\lambda_{\pi}(q_{2}/d)|}{\sqrt{dq_2}}|\sum_{m\neq 0} \frac{\lambda_{\pi}(m)}{\sqrt{ |m|}}e(mdq_{2}\alpha)W_f(|mdq_{2}|y) \psi(mdq_2 u)|> \frac{1}{\tau(q_{2})} 
\]
with $u=\max(|\epsilon N|,|\omega| N^2)$ and $\psi$ a smooth function with
\[
\psi^{(j)}(x)\ll_{j,k} \delta^{-j} (1+|x|)^{-k} \hspace{20pt} k,j\ge 0.
\]
The function $\psi$ could depend on some of the parameters, but the bounds on the derivatives are absolute. 
Thus, if $\pi$ is discrete, splitting into dyadic intervals and using \eqref{bound-fourier-series-discrete}, Proposition \ref{period-bound-proposition} (with its remark) and \eqref{Peter} we have
\[
\delta^{-O(1)}(q_{2}/d)^{\theta} (dq_{2})^{-1/2}\min(1,yu^{-1})^{1/2} >  \tau(q_{2})^{-1} 
\] 
so that $q_{2}<\delta^{-O(1)}$ and $u< y\delta^{-O(1)}$. Now, there exists a natural number $q_1\le 1/\sqrt{\delta y}$ such that $\|q_1\alpha\|\ll\sqrt{\delta y}$; picking $q=q_1q_2$ one can check that the condition \eqref{HeckeCondition} is satisfied. If $\pi$ is continuous, again by Proposition \ref{period-bound-proposition} and \eqref{WiltonCont}---with the remarks after them---we have
\[
\delta^{-O(1)}  (dq_{2})^{-1/2}   \sqrt{dq_{2}y} \, \, \min (\frac{1}{q^* dq_{2}(y+u)},\frac 1{\|q^* dq_{2}\alpha\|}) > \tau(q_{2})^{-2} 
\]
for some $q^*\ll (dq_{2}(y+u))^{-1/2}$. Therefore for some $d^*
\mid q_{2}$ 
\[
q^*d^*q_{2}<(\delta/\tau(q_{2}))^{-O(1)}y^{-\frac 12}(1+uy^{-1})^{-1}
\]
and
$
\|q^*d^*q_{2}\alpha\|<(\delta/\tau(q_{2}))^{-O(1)}y^{1/2}.
$
These conditions imply \eqref{HeckeCondition}  with $q=q^* d^* q_2$.
\end{proof}

We now deal with the intermediate case. In the proof we simply use bounds for the Fourier coefficients.

\begin{proposition}[Intermediate case]\label{intermediate-case}
Let $0<\delta<1/2$, $N\ge 1$, $T=sN\ge 1$ and $g\in G$. Assume $s\delta^{-O(1)}<T^{1/4}$. The sequence $\Gamma g h(s)^n$, $n\le N$ is $\delta$-equidistributed w.r.t. the probability measure carried by $\Gamma g h(t)$, $t\in [0,T]$ unless either   both $y_T<\delta^{-O(1)}s/T$ and $|W_T| y_T<\delta^{-O(1)} s^2 T^{2\theta+o(1)}$ or both $y_T\delta^{-O(1)}> s^{-2} T^{-2\theta-o(1)}$ and $|W_T| y_T\delta^{-O(1)}> (T/s)^{2-o(1)} y_T^{2\theta}$. 
\end{proposition}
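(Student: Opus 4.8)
Since $h(s)^n=h(sn)$ and $T=sN$, the proposition is literally a \emph{Riemann--sum estimate}: with $F(t)=f(\Gamma g h(t))$ I must show $|\E_{n\le N}F(sn)-\E_{t\in[0,T]}F(t)|<\delta\|f\|_{\mathrm{Lip}}$ unless the parameters fall in one of the two listed ranges. As always I would first use \eqref{lipschitz-controls-sobolev} to pass to an $f$ with $\delta^{-O(1)}$--bounded Sobolev norms (and $\|Df\|_{L^\infty}\ll\delta^{-\mathrm{ord}D}$), suppose for contradiction that the left side is $\gg\delta$, take $\delta^{-1}$ to be at most a small power of $T$, and suppose we are in neither exceptional range. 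Then I would put the piece of horocycle in highest position, using \eqref{parametrization-piece-flat} together with Lemma \ref{FundDomainSubset} --- so that $|W_T|\gg T\gg s\delta^{-O(1)}$ by the hypothesis $s\delta^{-O(1)}<T^{1/4}$ --- to replace $F(t)$, up to a negligible $O(|W_T|^{-1})$ error, by $f(\Gamma h(\phi(t))a(\psi(t)))$ with $\phi(t)=\alpha_T+\tfrac{y_TW_T}{1\pm t/W_T}$, $\psi(t)=\tfrac{y_T}{(1\pm t/W_T)^2}$.

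\textbf{Poisson summation and spectral expansion.} Next I would smooth the cutoff $1_{[1,N]}$ and Poisson--sum in $n$, so that the difference of the two averages becomes $\tfrac1T\sum_{k\neq0}\int_0^T\widetilde\eta(t/T)F(t)e(-kt/s)\,dt$ up to $O(\delta)$, the range $|k|\gg s\delta^{-O(1)}$ being negligible by repeated integration by parts (Lemma \ref{integration-parts}). Inserting the spectral decomposition $f=\int f_\pi\,d\mu(\pi)$ (only $\lambda_\pi\ll\delta^{-O(1)}$ contribute, by \eqref{spectral-coefficients-decay}) and the cuspidal Fourier expansion $f_\pi(h(x)a(y))=\widetilde f_\pi(0,y)+\sum_{m\neq0}\tfrac{\lambda_\pi(|m|)}{\sqrt{|m|}}W_{f_\pi}(|m|y)e(mx)$, the modes $m=0$ (Eisenstein only) give a genuine Riemann--sum error of a slowly varying function, $\ll\|f\|_{W^{O(1)}}\,s/|W_T|\ll\delta$. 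So everything is reduced to bounding, for $m\neq0$ and $1\le|k|\ll s\delta^{-O(1)}$, the twisted integrals $\int_0^T\widetilde\eta(t/T)\widetilde f_\pi(m,\psi(t))e(m\phi(t)-kt/s)\,dt$.

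\textbf{Resonances and stationary phase.} On $[0,T]$ one has $\phi'(t)\asymp\mp y_T$ of fixed sign, with higher derivatives smaller by a factor $T/|W_T|\ll1$. Unless $|m|y_T=(1+o(1))|k|/s$ with a matching sign of $m$, the phase derivative $m\phi'(t)-k/s$ stays $\gg y_T\gg1/T$ in absolute value on all of $[0,T]$ and Lemma \ref{integration-parts} makes the integral negligible. In the remaining \emph{resonant} range $|m|\asymp|k|/(sy_T)$ one has $m\phi''(t)\asymp|k|/(s|W_T|)$ with the higher derivatives again smaller, so Lemma \ref{stationary-phase} (when $|k|\gg s|W_T|$) or a direct second--derivative estimate (otherwise) gives
\[
\int_0^T\widetilde\eta(t/T)\widetilde f_\pi(m,\psi(t))e(m\phi(t)-kt/s)\,dt\ \ll\ |\widetilde f_\pi(m,\psi(t_*))|\cdot\min\!\bigl(T,\sqrt{s|W_T|/|k|}\,\bigr),
\]
where $\psi(t_*)\asymp y_T$. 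I would then sum over the $O(1+|k|T/(sy_T|W_T|))$ resonant $m$ for each $k$ and over $1\le|k|\ll s\delta^{-O(1)}$, estimating $|\widetilde f_\pi(m,\psi(t_*))|$ by Proposition \ref{period-bound-proposition} for $\pi$ discrete (its remark replacing $y^{-\theta}$ by $m^\theta\ll(|k|/sy_T)^\theta$ since $\Gamma=SL_2(\Z)$) and by \eqref{WiltonCont} for $\pi$ continuous; the decay of these bounds in $|m|y_T\asymp|k|/s$ makes the $k$--sum converge, so only \emph{size} bounds for the Fourier coefficients are used, with no cancellation among the modes.

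\textbf{Expected obstacle.} Collecting everything yields $\ll\delta^{-O(1)}$ times a monomial in $y_T,|W_T|,s,T$ --- roughly $s\sqrt{|W_T|}\,y_T^{1/2-\theta}/T$ and $s\,y_T^{1/2-\theta}$ from the discrete spectrum (up to $T^{o(1)}$) and analogous quantities with $\|q\alpha_T\|$ denominators from the continuous spectrum --- and demanding that this be $\gg\delta$ is exactly what forces $(y_T,W_T)$ into one of the two alternatives: $y_T\ll\delta^{-O(1)}s/T$ with $|W_T|y_T\ll\delta^{-O(1)}s^2T^{2\theta+o(1)}$, or $y_T\gg\delta^{O(1)}s^{-2}T^{-2\theta-o(1)}$ with $|W_T|y_T\gg\delta^{O(1)}(T/s)^{2-o(1)}y_T^{2\theta}$. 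I expect the heart of the work to be this resonance bookkeeping --- deciding for each $k$ exactly which $m$ survive and whether the stationary point is steep ($|k|$ larger or smaller than $s|W_T|$), then summing the divisor--weighted coefficient bounds over the surviving pairs so that precisely these two thresholds emerge --- while the accompanying errors (parametrization, smoothing and tail in Poisson summation, the zero mode, non-degeneracy of stationary points) are all held to $O(\delta)$ by $s\delta^{-O(1)}<T^{1/4}$.
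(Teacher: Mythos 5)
Your skeleton is the right one — put the piece of horocycle in highest position via \eqref{parametrization-piece-flat}, smooth, use the Fourier expansion of $f$, apply Poisson summation in $n$, and run stationary phase on the resonant $(m,k)$ pairs. The small-$m$ analysis you describe (where "small" means $|m|\ll \delta^{-O(1)}T^{\epsilon}|W_T|/T$) is indeed what the paper does, and it produces both inequalities of the second alternative by combining the stationary-phase thresholds with the coefficient bound of Proposition \ref{period-bound-proposition}. So far so good.

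The gap is in the sentence "only \emph{size} bounds for the Fourier coefficients are used, with no cancellation among the modes." In the large-$m$ regime this is exactly what fails. Absolute-value bounds there give you the second part of the first alternative, $|W_T|y_T<\delta^{-O(1)}s^2T^{2\theta+o(1)}$, but they cannot produce the first part, $y_T<\delta^{-O(1)}s/T$: there is no monomial in your final display whose being $\gg\delta$ forces an \emph{upper} bound on $y_T$ of that shape, because the resonant $(m,k)$ pairs are too numerous in this range and their coefficient sizes are not small enough to beat the count. The paper handles this by applying Cauchy's inequality in $m$ together with Parseval \eqref{parseval-smooth} (rather than Proposition \ref{period-bound-proposition}), then opening the square, exchanging the order of summation, and doing a second Poisson summation plus stationary phase on the off-diagonal. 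The \emph{diagonal} after Cauchy--Schwarz is precisely what yields $\delta^{-O(1)}sT/(y_T W_T)>T^2 W_T^{-1}$, i.e.\ $y_T<\delta^{-O(1)}s/T$; and the \emph{off-diagonal} is where the hypothesis $s\delta^{-O(1)}<T^{1/4}$ is genuinely needed, to derive a contradiction from the resulting exponential sum. In your write-up that hypothesis only appears as a device to control smoothing and parametrization errors, which conceals that it is doing the real work in the off-diagonal estimate. Without the Cauchy--Schwarz/Parseval step and the subsequent diagonal/off-diagonal analysis, you obtain a disjunction of conditions but not the specific conjunction $y_T<\delta^{-O(1)}s/T$ \emph{and} $|W_T|y_T<\delta^{-O(1)}s^2T^{2\theta+o(1)}$ that the proposition asserts.

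A smaller remark: the paper does not decompose spectrally in this proof; it works directly with the Fourier expansion of $f$ at a single height $y_*\asymp y_T$. Your route through the spectral decomposition and the Kirillov/Whittaker picture is harmless but is what is used for Proposition \ref{near-to-closed-horo}, not here, and it obscures that the ingredient you actually need in the large-$m$ range is an $\ell^2$ bound in $m$ (Parseval), not an $\ell^\infty$ bound per $m$.
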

\begin{proof}
Let us assume for simplicity that $W_T>0$.
If $\Gamma g h(s)^n, n\le N$ is not $\delta$-equidistributed w.r.t. the measure carried by $\Gamma g h(t),t\in [0,T]$, splitting the orbit (\ref{parametrization-piece-flat}) and using (\ref{lipschitz-controls-sobolev})  we have that 
\[
|\mathbb E_{n/N \in I} f(h(x_{sn})a(y_*))\eta( n/N)|\gg\delta,
\]
for some $y_T<y_* <y_T \delta^{-O(1)}$, $f$ bounded with $\int_0^1 f(t+iy_*)\, dt=0$ and derivatives bounded by $\delta^{-O(1)}$, $\eta\in C_0^{\infty}$ with derivatives bounded by $\delta^{-O(1)}$ and supported on an interval $I$ of length larger than $\delta^2$, at distance at least $\delta^2$ from zero and such that $\delta^{-O(1)}|1\pm sn/W|>1$, with
\[
x_t=\alpha+yW(1\pm t/W)^{-1},
\]
$W=W_T$, $y=y_T$. Now, using  the Fourier expansion of $f(t+iy_*)$,  (\ref{parseval-smooth-large}) and Proposition \ref{period-bound-proposition} we have
\begin{equation}\label{previous}
\delta^{-O(1)}\sum_{\delta^{-O(1)} y^{-\epsilon}<|m| <\delta^{-O(1)}y^{-1}} |\tilde f(m,y_*)| |\mathbb E_{n} \eta(\frac nN) e(mx_{sn})|>1.
\end{equation}
Then,  we have (\ref{previous}) with the restriction that $|m|$ is either smaller or larger than  $\delta^{-O(1)} T^{\epsilon} W/T$. In the former possibility,  by Poisson summation in the inner sum, Lemmas \ref{integration-parts} and \ref{stationary-phase} and Proposition \ref{period-bound-proposition} we deduce that $y\delta^{-O(1)}> s^{-2} T^{-2\theta-o(1)}$ and $Wy\delta^{-O(1)}> (T/s)^{2-o(1)} y^{2\theta}$. In the latter, repeating the same steps we get that $Wy$ is at most $\delta^{-O(1)} s^2 T^{2\theta+o(1)}$, and using Cauchy's inequality and (\ref{parseval-smooth}) instead of Proposition \ref{period-bound-proposition} we have
\[
\delta^{-O(1)}\sum_{W/T<m<\delta^{-O(1)}y^{-1}} |\sum_{j\neq 0} b_j \eta_j^*(\frac{1-\sqrt{\frac{my}{j/s}}}{T/W}) \frac{e(2W\sqrt{myj/s})}{\sqrt{(j/s)T^2/W}}|^2 >1
\]
with $b_j=e(-Wj/s)$. Now, introducing a smooth function in the outer sum, expanding the square and changing the order of summation gives that either (from the diagonal terms) 
$\delta^{-O(1)}s T/yW> T^2 W^{-1}$
or 
\[
\delta^{-O(1)}|\sum_m  \rho(\frac{my/c-1}{a}) \psi( my/c)e(\epsilon cT\sqrt{my/c})|>Ts^{-2}(\epsilon c)^{-1+o(1)}
\]
for some $\psi, \rho \in C_0^{\infty}$ with support and derivatives bounded by $\delta^{-O(1)}$ and at distance $\delta^2$ from zero, $a<\delta^{-O(1)}$ and $\epsilon, c<\delta^{-O(1)}$, $\epsilon c\gg 1/s$.

The first possibility implies that
$y<\delta^{-O(1)}s/T$. The second possibility implies, writing $\rho(x)=\int \hat \rho(\theta) e(x\theta)\, d\theta$, that for some $\theta$ we have
\[
\delta^{-O(1)}|\sum_m \psi(my/c) e(m\theta) e(\epsilon c T\sqrt{my/c})|> Ts^{-2}(\epsilon c)^{-1+o(1)}
\]
which applying Poisson summation and Lemmas \ref{integration-parts} and \ref{stationary-phase} gives that $\delta^{-O(1)}s>T^{1/4}$, a contradiction.
\end{proof}

Now we treat the case in which the piece of horocycle is far from a closed one. The previous result worked for $y_T>s/T$, that is almost for the whole range. To handle the remaining range, we are going to split the piece of horocycle of length $T$ in pieces of length $\epsilon T$, $\epsilon <1$. Then we shall put those pieces in the fundamental domain $D_{X,\epsilon T}$, and then use the Fourier expansion (as in Proposition \ref{intermediate-case}). The advantage is that now we can get an extra cancellation from the fact that different pieces are essentially uncorrelated, and we can control this independence arithmetically. The ideas of the method come from the theory of exponential sums; it is essentially a modification of the one in \cite{jutila}.

\begin{proposition}[Far from a closed horocycle]\label{far-from-closed-horo}
Let $0<\delta<1/2$, $N\ge 1$, $T=sN\ge 1$ and $g\in G$. Assume $y_T<\delta^{-O(1)}s/T$ and $W_T y_T<\delta^{-O(1)}s^2T^{2\theta+o(1)}$. The sequence $\Gamma g h(s)^n$, $n\le N$ is $\delta$-equidistributed w.r.t. the volume measure on $X$ unless  $T^{1/5}s^{-1}$ is smaller than $\delta^{-O(1)}$. 
\end{proposition}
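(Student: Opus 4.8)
The plan is to argue by contradiction, following the ``$B$-process''/subdivision strategy for exponential sums twisted by Fourier coefficients of cusp forms, as in Jutila \cite{jutila} --- indeed this is exactly the route sketched just before the statement. Suppose the conclusion fails. As in the proofs of Propositions \ref{near-to-closed-horo} and \ref{intermediate-case} we may fix $f\in C(X)$ with $\int_X f\,d\mu_G=0$ and $\|f\|_{W^j}\ll_j\delta^{-O(j)}$, and $\eta\in C_0^\infty((0,1))$ with $\|\eta^{(j)}\|_{L^\infty}\ll_j\delta^{-O(j)}$, so that $\bigl|\E_{n\le N}f(\Gamma g h(s)^n)\,\eta(n/N)\bigr|\gg\delta$. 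Fix a parameter $\epsilon\in(\delta^{-O(1)}/N,1)$ to be optimised at the end, and split $[0,T]$ into $J\asymp 1/\epsilon$ consecutive blocks $B_\ell=[\ell\epsilon T,(\ell+1)\epsilon T]$. For each $\ell$ the arc $\{\Gamma g h(t):t\in B_\ell\}$ is a piece of horocycle of length $\epsilon T$; by Lemmas \ref{fundamental}--\ref{FundDomainSubset} we translate it by a suitable $\gamma_\ell\in\Gamma$ into $D_{X,\epsilon T}$. Since $y_T<\delta^{-O(1)}s/T$ the full arc hugs the cusp, and after translation block $\ell$ sits at height $y^{(\ell)}\asymp y_{\epsilon T}$, with tangency point $\alpha^{(\ell)}=\gamma_\ell\cdot\alpha_T$ --- the M\"obius image under $\gamma_\ell$ of the common tangency point $\alpha_T$ of the underlying horocycle --- and an angle still controlled by $W_T$ (shortening makes each block ``more like'' a closed horocycle, while the hypothesis $W_Ty_T<\delta^{-O(1)}s^2T^{2\theta+o(1)}$ prevents it from being too close to one).

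Using the flat parametrisation \eqref{parametrization-piece-flat} on each block, Poisson summation in $n$, and the Fourier expansion in the cusp together with Proposition \ref{period-bound-proposition} --- precisely the block-level computation already performed in Propositions \ref{near-to-closed-horo} and \ref{intermediate-case} --- the $\ell$-th block average equals
\[
\sum_{m\ne0}\tilde f\bigl(m,y^{(\ell)}\bigr)\,e\bigl(m\alpha^{(\ell)}\bigr)\,I_\ell(m)\;+\;O(\delta^{10}),
\]
where $I_\ell(m)$ is an oscillatory integral coming from the (essentially quadratic) phase on the block, negligible unless $|m|\ll\delta^{-O(1)}(y^{(\ell)})^{-1}$ and evaluated by Lemmas \ref{integration-parts} and \ref{stationary-phase}. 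Summing over $\ell$ via $\sum_n=\sum_\ell\sum_{n\in B_\ell}$, the assumed non-equidistribution gives
\[
\delta^{-O(1)}\Bigl|\sum_{m\ne0}\E_\ell\,\tilde f\bigl(m,y^{(\ell)}\bigr)\,e\bigl(m\alpha^{(\ell)}\bigr)\,I_\ell(m)\Bigr|>1 .
\]
The triangle inequality here would merely recover Proposition \ref{intermediate-case}; the gain must come from the averaging over $\ell$. We replace $\tilde f(m,y^{(\ell)})$ by $\tilde f(m,y_{\epsilon T})$ up to a controllable error, apply Cauchy's inequality in $m$ using \eqref{parseval-smooth} and \eqref{parseval-smooth-large} to bound the $\sum_m|\tilde f(m,\cdot)|^2$ factor, and expand the square of the $\ell$-average, reaching
\[
\delta^{-O(1)}\sum_{0<|m|\ll\delta^{-O(1)}y_{\epsilon T}^{-1}}\;\E_{\ell_1,\ell_2}\;e\bigl(m(\alpha^{(\ell_1)}-\alpha^{(\ell_2)})\bigr)\,I_{\ell_1}(m)\,\overline{I_{\ell_2}(m)}>\delta^{O(1)} .
\]

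The diagonal $\ell_1=\ell_2$ contributes roughly $\delta^{-o(1)}W_T/(s^2y_T\epsilon N^2)$ (the number of ``low'' modes with no van der Corput gain, the rest being summed by Lemma \ref{stationary-phase}); this is the term that, combined with the hypotheses and the lower bound $y_T\gg T^{-1}$ of Lemma \ref{fundamental}, will produce the range restriction on $s$. For the off-diagonal, the key point --- the ``arithmetic control of the independence of the blocks'' --- is that $\alpha^{(\ell_1)}-\alpha^{(\ell_2)}$ is an explicit M\"obius expression in $\gamma_{\ell_1}\gamma_{\ell_2}^{-1}$ and $\alpha_T$, so that $\sum_m e(m(\alpha^{(\ell_1)}-\alpha^{(\ell_2)}))I_{\ell_1}(m)\overline{I_{\ell_2}(m)}$ has cancellation (via Lemma \ref{integration-parts}) unless $\|\alpha^{(\ell_1)}-\alpha^{(\ell_2)}\|$ is tiny, i.e.\ unless the pair $(\ell_1,\ell_2)$ is \emph{resonant}. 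Resonant pairs are abundant only when $\alpha_T$ admits a rational approximation of bounded height --- equivalently, a closed-horocycle obstruction of the type measured by $\kappa_U(\alpha_T)$ in Lemma \ref{fundamental} --- which the standing hypotheses $y_T<\delta^{-O(1)}s/T$ and $W_Ty_T<\delta^{-O(1)}s^2T^{2\theta+o(1)}$ rule out; hence the off-diagonal is dominated by the diagonal up to $\delta^{-O(1)}$. Optimising $\epsilon$ in the resulting inequality then forces $T^{1/5}s^{-1}<\delta^{-O(1)}$, i.e.\ the sequence is equidistributed whenever $s<\delta^{O(1)}T^{1/5}$, which is the assertion.

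I expect the main obstacle to be this final optimisation: one must carry four competing conductors --- the cusp height $y_T$, the closeness parameter $W_Ty_T$, the block length $\epsilon T$, and the denominator $\kappa_U(\alpha_T)$ of the best rational approximation to $\alpha_T$ --- through the diagonal/off-diagonal bookkeeping, controlling the stationary points of the block phases uniformly in $\ell$, and choose $\epsilon$ so that the two terms are in balance; it is precisely this balance that yields the exponent $1/5$ and makes the dependence on $\theta$ drop out of the final bound. A subsidiary technical point is to check that translating each block into $D_{X,\epsilon T}$ and invoking \eqref{parametrization-piece-flat} perturbs the phases $m\alpha^{(\ell)}$ by at most $O(\delta^{10})$ --- this is where the $O\bigl(W_T^{-1}/(1\pm t/W_T)\bigr)$ error in \eqref{parametrization-piece-flat} must be handled block by block.
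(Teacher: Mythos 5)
Your high-level strategy --- split the orbit, translate pieces to the cusp, expand in Fourier, apply Cauchy-Schwarz in the frequency variable, and estimate the resulting diagonal and off-diagonal --- matches the paper's strategy. But there are two genuine gaps.

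First, the decomposition you propose is consecutive blocking of $[0,T]$ into $J\asymp 1/\epsilon$ equal blocks $B_\ell$, whereas the paper performs a \emph{Farey dissection}: it partitions the $x$-axis near $\partial\Hy$ into Mediant intervals around Farey fractions $a/q$ with $q\le K$ (of length $\asymp 1/(qK)$), restricts attention to a narrow dyadic range $q\asymp K$ (the small-$q$ fractions are shown to be negligible by a measure argument), and groups the $n$'s accordingly. This matters: after lifting to the fundamental domain, the Farey piece around $a/q$ sits at height $\asymp 1/(q^2 y)$, which is uniform precisely because $q\asymp K$. With consecutive blocks that is not so. A block that happens to straddle a Farey fraction of small denominator lifts to a height very different from $y_{\epsilon T}$; in general $y^{(\ell)} = y_{\epsilon T}(\Gamma g h(\ell\epsilon T))$ ranges over a span of size roughly $K^2$ as $\ell$ varies, because the local $\alpha^{(\ell)}$ is a M\"obius image of $\alpha_T$ whose Diophantine type varies with $\ell$. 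The step where you ``replace $\tilde f(m,y^{(\ell)})$ by $\tilde f(m,y_{\epsilon T})$ up to a controllable error'' therefore cannot be justified as written, and your intended invocation of \eqref{parseval-smooth}/\eqref{parseval-smooth-large} silently uses a uniform height that does not exist. The Farey dissection is the device that produces the required uniformity.

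Second, your description of how the hypotheses control the off-diagonal is not accurate. You argue that ``resonant pairs are abundant only when $\alpha_T$ admits a rational approximation of bounded height'' and that the standing hypotheses $y_T<\delta^{-O(1)}s/T$ and $W_Ty_T<\delta^{-O(1)}s^2T^{2\theta+o(1)}$ rule this out. They do not --- those hypotheses say nothing about the Diophantine type of $\alpha_T$ at the scale of the blocks. In the paper, the bulk of the off-diagonal cancellation comes instead from the arithmetic spreading of the numbers $\overline a/q$ (the modular inverses), a counting argument that is independent of $\alpha_T$. The hypotheses $y_T\ll\delta^{-O(1)}s/T$ and $W_T y_T\ll\delta^{-O(1)}s^2T^{2\theta+o(1)}$ enter only in the analysis of the residual ``$V$''-set (pairs with equal inverse fractions), where they are used to bound quantities such as $(Wy)^2(|\lambda|F)^{-1}$ and $|B-B'|$; this is where the exponent $1/5$ and the condition $\theta<1/5$ actually appear. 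Finally, you acknowledge that the crucial optimisation --- the bookkeeping that makes the diagonal contribution $\asymp\delta^{-O(1)}c^{-1}(sT)^{-1/3}$ beat the target $c^{-o(1)}(cs)^{-2}$, forcing $K^2y_T=(sT)^{-2/3}$ and ultimately $s<\delta^{O(1)}T^{1/5}$ --- is left undone; since both gaps above feed directly into that computation, the proposal does not yet establish the proposition.
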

\begin{proof}
Let us assume for simplicity that $W_T>0$, and write $y=y_T$, $W=W_T$.
If $\Gamma g h(s)^n, n\le N$ is not $\delta$-equidistributed w.r.t. the volume measure on $X$, then by (\ref{parametrization-piece-flat}) and (\ref{lipschitz-controls-sobolev}) there exists $f\in L_0^2 (X)$ bounded and with derivatives bounded by $\delta^{-O(1)}$ and an interval $I$ of length satisfying $N|I|^{-1}<\delta^{-O(1)}$ such that
\[
|\mathbb E_{n\in I} f(h(x_{sn})a(y_*))| \gg\delta
\]
where $y\ll y_*<\delta^{-O(1)}y$, $\delta^{-O(1)}|1\pm sn/W|>1$ for any $n\in I$ and 
\[
x_t=\alpha+ yW(1\pm t/W)^{-1}.
\]
We can take $\alpha$ such that $|\alpha+yW|\le 1/2$.
Now we divide the sum into sums such that the argument in $h(\cdot)$ is near to a Farey fraction $a/q$ up to $q\le K$, with $K^2 y=(sT)^{-2/3}$. This choice for $K$ will make sense later; by now we can say that since $y<\delta^{-O(1)}s/T$, we have $K>\delta^{-O(1)}$ in the range $\delta^{-O(1)}s<T^{1/5}$. Precisely, we are going to take the interval around $a/q$ 
\[
(\frac aq - \frac 1{q(q+q')}, \frac aq +\frac 1{q(q+q'')})
\]
where $q'$ and $q''$ are the denominators of the Farey fractions (up to $K$) to the left and right of $a/q$ respectively. The point is that both $q+q'$ and $q+q''$ are comparable to $K$. In this way, we can write 
\[
1(x)= \sum_{a,q} \tilde\eta_{a, q}(\frac{x-a/q}{1/qK})
\]
where $a,q$ ranges over integers $a,q$ with $q\le K$, and $\tilde\eta_{a, q}$ are $C_0^{\infty}$ functions with $\|\tilde\eta_{a,q}^{(j)}\|_{L^{\infty}}\ll_j\delta^{-O(j)}$ with uniformly bounded support, and $\tilde\eta_{a,q}=0$ if $a,q$ are coprimes. In this way, we have
\[
|\mathbb E_{q\le K} \mathbb E_{a\in qL} \mathbb E_{n\in I_{a/q}}  f(h(x_{sn})a(y_*)) \tilde\eta_{a, q}(qK(x_{sn}-a/q))|\gg\delta
\]
with $L$ a subinterval of $[-\delta^{-O(1)}yT,\delta^{-O(1)}yT]$ with $yT/|L|<\delta^{-O(1)}$, and $I_{a/q}$ an interval of size $\asymp 1/syqK\gg \delta^{-O(1)}$ containing all $n$ such that $qK(x_{sn}-a/q)$ is in the support of $\tilde\eta_{a,q}$.  Taking into account that the fractions with $q<\delta^2 K$ give an amount $O(\delta^2)$, we have
\[
|\mathbb E_{\delta^2 K<q<K} \mathbb E_{a\in qL} \mathbb E_{n\in I_{a/q}}  f(h(x_{sn})a(y_*)) \eta^{\#}_{a,q}(q^2(x_{sn}-a/q))|\gg \delta
\]
with $\eta^{\#}_{a,q}(t)=\tilde\eta_{a,q}(tK/q)$. Now, splitting $\delta^2 K<q<K$ into $\delta^{-O(1)}$ intervals of equal length, we deduce that 
\[
\mathbb E_{|q-q_0|<\delta^c K} \mathbb E_{a\in qL} \mathbb E_{n\in I_{a/q}}  f(h(x_{sn})a(y_*)) \eta^{\#}_{a,q}(q^2(x_{sn}-a/q))|\gg \delta
\]
for some $q_0\in [\delta^2 K, K]$, $c>1$ a large constant. This implies that we can change $a\in qL$ by $a\in q_0 L$. Proceeding in the same way we have
\[
|\mathbb E_{|q-q_0|<\delta^c K} \mathbb E_{|a-a_0|<\delta^c K} \mathbb E_{n\in I_{a/q}}  f(h(x_{sn})a(y_*)) \eta_{a,q}(q^2(x_{sn}-a/q))|\gg \delta
\]
with $\supp \eta_{a,q}\subset [w_0,w_0+\delta^c]$ for some fixed $\delta^2\ll w_0\ll 1$ and $a_0\asymp \delta^{-O(1)}  yTK$, $\eta_{a,q}$ with the same properties as $\tilde\eta_{a,q}$. Defining $v_t=q^2(x_t-a/q)$ we can write
\[
|\mathbb E_q \mathbb E_a \mathbb E_n  f(h(a/q +v_{sn}q^{-2})a(y_*)) \eta_{a,q}(v_{sn})|\gg \delta
\]
with  and $a,q,n$ moving in the previous ranges. But multiplying by $\gamma_{a/q}\in\Gamma$ as in the proof of Lemma \ref{period-made-explicit-discrete} we have that $\gamma_{a/q} h(a/q +v/q^2)a(y_*)$ equals
\[
h(-\frac{\overline a}q-\frac{v}{v^2+(q^2 y_*)^2}) a(\frac{q^2y_*}{v^2+(q^2 y_*)^2})k(-\arccot\frac{v}{q^2 y_*})
\]
which, due to the restrictions on the ranges of $a,q, \eta_{a,q}$, is at distance $O(\delta^2)$ from
$
h(-\overline a/q-1/v+r_0) a(K^2y_0)
$
for some $y_0$ with $y<y_0< \delta^{-O(1)} y$ and $r_0$, both independent of $a, q$. Then
\[
|\mathbb E_{q} \mathbb E_a\mathbb E_{n}  f(h(-\frac{\overline a}q-\frac{1}{v_{sn}}+r_0) a(K^2y_0)) \eta_{a,q}(v_{sn})|\gg \delta
\]
The Fourier expansion $f(h(t)a(y))=\sum_m \tilde f(m,y)e(mt)$ gives that
\[
\delta^{-O(1)}|\sum_{m} \tilde f(m,K^2y_0) e(mr_0) \mathbb E_{a,q} e(-\frac{\overline a m}q)\mathbb E_n e(-\frac{m}{v_{sn}})\eta_{a,q}(v_{sn}) | >1.
\]
By Proposition \ref{period-bound-proposition} we can assume that $m$ is in the range $|m|>\delta^{-O(1)}T^{\epsilon}$ for some $\epsilon>0$. Thus, by Cauchy's inequality and (\ref{parseval-smooth-large}) we have
\[
\delta^{-O(1)}\mathbb E_{m} |\mathbb E_{a,q} e(-\frac{\overline a m}q)\mathbb E_n e(-\frac{m}{v_{sn}})\eta_{a,q}(v_{sn})|^2 > K^{2} y, 
\]
where $m$ moves in the range $\delta^{-O(1)}T^{\epsilon}<m<\delta^{-O(1)}(K^2 y)^{-1}$. By Poisson Summation in the $n$-sum, Lemmas \ref{integration-parts} and \ref{stationary-phase} and splitting of the obtained sum into intervals of the shape $[J,(1+\delta)J]$ we have
\[
\delta^{-O(1)}\mathbb E_{m} |\mathbb E_x\, b_x\, e(A\frac mF + B \sqrt{\frac mF})\eta_x^*(\frac{m}{F})|^2> c^{-1-o(1)}s^{-2}
\]
with $x=(a,q,j)$, $b_x$ independent of $m$ and bounded, $j$ in the range $cs<j<(1+\delta)cs$ for some $c<\delta^{-O(1)}$, $cs\ge 1$, and
\[
 F= \frac c{K^2 y}, \hspace{20pt} A=(-\frac{\overline a}q+\frac{q^{-2}}{a/q-\alpha})F,  \hspace{20pt} B= \frac{2Wy}{a/q-\alpha}\frac Kq \sqrt{\frac{j}{cs}} F. 
\]
We also can assume that we are summing just in the $a$'s for which $\overline a/q$ is contained in an interval of length $1/2$.
Introduction of a smooth factor in the $m$-sum and expansion of the square followed by Poisson Summation in $m$ and Lemmas \ref{integration-parts} and \ref{stationary-phase} gives that 
\begin{equation}\label{expectation_inequality}
 \delta^{-O(1)} \E_{(x,x')} \big(1_{x=x'}+ \frac{1_{\overline{a}/q\neq \overline{a'}/q'}}{\sqrt{|A-A'|}} + \frac{1_V}{\sqrt{1+|A-A'|}} \big) > \frac{c^{-o(1)}}{(cs)^{2}},
\end{equation}
where $A'=A(x')$, $B'=B(x')$ and $V$ is the subset of $(x,x')$ with $x\neq x'$ for which $\overline a/q= \overline{a'}/q'$ and either $|A-A'|$ and $|B-B'|$ are of comparable size or both of them are bounded by $\delta^{-O(1)}T^{\epsilon}$. This is so due to the restrictions in the ranges of $a,q,j$ and the fact that $a/q-\alpha\asymp Wy$.

The first summand in (\ref{expectation_inequality}) gives a contribution of $\delta^{-O(1)} (K yTK cs)^{-1}$ to the expectation, which equals $\delta^{-O(1)} c^{-1} (sT)^{-1/3}$. This is smaller than the right hand of (\ref{expectation_inequality}) in the range $\delta^{-O(1)}s<T^{1/5}$. In the second summand, again due to the restriction in the ranges of $a,q,j$, we have
\[
 |A-A'|\asymp |\overline a/q -\overline{a'}/q'| F.
\]
Then, by counting we see that the contribution of the terms with $|\overline a/q-\overline{a'}/q'|\asymp U$ is $\delta^{-O(1)}  U(UF)^{-1/2}$, so the full contribution of the second summand is $\delta^{-O(1)} F^{-1/2}$ which equals the one of the first summand---this is what motivated the election of $K$---and then is smaller than the right hand of (\ref{expectation_inequality}). 

Finally, we are going to show that the contribution of the third summand in (\ref{expectation_inequality}) is even smaller than the other two. For the terms in $V$ we have $q'=q$ and $a'=a+\lambda q$, with $0\neq \lambda \ll yT$, and then
\[
 |A-A'|\asymp \frac{|\lambda|}{q^2 (yW)^2} F, \hspace{20pt}    |B-B'|\asymp \frac Kq |\frac{\lambda}{a/q-\alpha}+1-\sqrt{j'/j}| F.
\]
Suppose first that $|1-\sqrt{j'/j}|\not\asymp |\lambda/(a/q-\alpha)|$. Then $|B-B'|$ is larger than $|\lambda/(a/q-\alpha)|$, so at least $\delta^{-O(1)}|A-A'|$. Moreover if $j'\neq j$ then $|B-B'|$ is at least $F/j$ which is larger than $\delta^{-O(1)}T^{\epsilon}$ in the range $s\delta^{-O(1)}<T^{1/2-\epsilon}$, so $(x,x')\not\in V$. Thus, we can assume $j'=j$, and then from (\ref{expectation_inequality}) we deduce that for $\lambda$ in some dyadic interval $\delta^{-O(1)}K^{-2}(|\lambda|/yT)(1/cs)$ is larger than  $T^{-o(1)}(cs)^{-2}$, so $\delta^{-O(1)}|\lambda| F>T^{1-o(1)}/s$; using this bound we have
\[
\delta^{-O(1)}|B-B'| \asymp \delta^{-O(1)}(Wy)^{-1}|\lambda|F\gg  (Wy)^{-1} T^{1-o(1)}/s
\]
which by $Wy<\delta^{-O(1)}s^2 T^{2\theta+o(1)}$ is larger than $\delta^{-O(1)} T^{\epsilon}$ in the range $\delta^{-O(1)}s<T^{1/5}$, since $\theta<1/5$. Therefore $(x,x')\not\in V$, a contradiction. 

Assume now that $|1-\sqrt{j'/j}|\asymp |\lambda/(a/q-\alpha)|$. Then
\begin{equation}\label{densityofj}
 1\le |j'-j|\asymp j|\lambda|(yW)^{-1}
\end{equation}
so the proportion of $j'$ is $O(|\lambda|/Wy)$. We can write $a=a_1+\mu q$ with $1\le a_1\le q$ and $\mu\ll yT$. For fixed $\lambda, a_1, q, j'$ and $j$, it is easy to see that $|A-A'|$ can be comparable to $|B-B'|$ for at most one $\mu$; in the same conditions, the number of $\mu$ for which both $|A-A'|$ and $|B-B'|$ can be at most $\delta^{-O(1)}T^{\epsilon}$ is $1+ \delta^{-O(1)}T^{\epsilon}(Wy)^2/(|\lambda| F)$. Applying the bound on $Wy$ from (\ref{densityofj}) we have
\[
  (Wy)^2 (|\lambda| F)^{-1} \ll |\lambda| j^2 c^{-1}(sT)^{-2/3}\ll yT s^2 (sT)^{-2/3},
\]
which, due to the bound $y\ll \delta^{-O(1)} s/T$, is smaller than $1$ in the range $s\delta^{-O(1)}<T^{2/7}$. Then, the proportion of $\mu$ in $V$ is $\delta^{-O(1)}T^{\epsilon}/yT$, so the contribution from the third summand is the supremum in $\lambda\ll yT$ of 
\[
 \delta^{-O(1)}T^{\epsilon} K^{-2}\frac{|\lambda|}{Wy}\frac{1}{yT} (\frac{|\lambda|}{K^2 (yW)^2} F)^{-1/2}
\]
which is at most $\delta^{-O(1)} T^{\epsilon}c^{-1/2} T^{-1/2}$ and then smaller than then right hand of (\ref{expectation_inequality}) in the range $\delta^{-O(1)} s<T^{1/4-\epsilon}$.

\end{proof}

\begin{remarks} (a) It would be possible to improve the range for $s$ a little by treating non-trivially the exponential sums appearing in the proof in the last application of Stationary Phase. This would improve the range also in Theorem \ref{dani-discrete}.

(b) We could prove this result also (with a smaller range for $s$) by proceeding as in the proof of Proposition \ref{intermediate-case}, but treating the sums $\sum_n \lambda_{\pi} (n) e(F(n))$ with van der Corput's Lemma and shifted convolution. One could also prove it in a quicker way (again with $s$ smaller) by relating the discrete orbit to the continuous one as in Lemma 3.1 of \cite{venkatesh}, and then using Theorem \ref{implicit-dani-continuous}. 
\end{remarks}

Now we join the previous cases to prove an effective version of Dani's result for sums

\begin{theorem}[Effective Dani, discrete orbit]\label{dani-discrete}
Let $\xi \in X$ fixed. Let $0<\delta <1/2$, $N\ge 1$ and $s>0$. There exists a positive integer $j<\delta^{-O(1)}$ such that the sequence $\xi h(s)^n$, with $n$ in any subinterval of $n\le N$ of length $N/j$, is $\delta$-equidistributed w.r.t. a  continuous algebraic measure on $X$ unless either $(sN)^{1/5}s^{-1}<\delta^{-O(1)}$ or the conditions in Lemma \ref{period-made-explicit-discrete} are satisfied. Moreover both $j$ and the measure are the ones in Theorem \ref{implicit-dani-continuous}---the continuous case.
\end{theorem}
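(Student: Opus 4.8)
The plan is to glue together Propositions \ref{near-to-closed-horo}, \ref{intermediate-case} and \ref{far-from-closed-horo}, which between them treat every long piece of a discrete orbit, and then to reconcile their exceptional regimes with the arithmetic conditions of Lemmas \ref{period-made-explicit-continuous} and \ref{period-made-explicit-discrete}. Fix $\xi=\Gamma g_0$ with $g_0$ of bounded coefficients, set $T=sN$, and assume we are in neither exceptional case of the theorem; thus $s\le\delta^{O(1)}T^{1/5}$ and the conditions of Lemma \ref{period-made-explicit-discrete} fail. As usual $\delta^{-1}$ may be assumed to be at most a small power of $T$, and, replacing $\delta$ by a fixed power of itself, it suffices to prove $\delta$-equidistribution on each subinterval of length $N/j$: the proofs of the three propositions all use a test cutoff supported on an arbitrary subinterval of relative length $\ge\delta^{O(1)}$, so they apply on such subintervals at the cost of a further fixed power of $\delta$, which is how $j<\delta^{-O(1)}$ enters. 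We also record that, $\xi$ being fixed, Lemma \ref{fundamental} gives $y_T\le\delta^{-O(1)}$ for all $T$.

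The case split is governed by the pair $(y_T,\,|W_T|y_T)$. First I would apply Proposition \ref{intermediate-case}: either the orbit is $\delta$-equidistributed with respect to the measure carried by the continuous piece $\Gamma g_0 h(t)$, $t\in[0,T]$ --- in which case Theorem \ref{implicit-dani-continuous} identifies that measure, on subintervals of length $T/j$, as a non-discrete algebraic measure, which by Lemma \ref{period-made-explicit-continuous} is the volume measure unless there is $q<\delta^{-O(1)}$ with $\|q\alpha\|<\delta^{-O(1)}T^{-1}$, exactly the dichotomy of Theorem \ref{dani-continuous} --- or else we lie in one of the two excluded regimes of that proposition. In the first, $y_T<\delta^{-O(1)}s/T$ and $W_Ty_T<\delta^{-O(1)}s^2T^{2\theta+o(1)}$, I would invoke Proposition \ref{far-from-closed-horo}, whose hypotheses are precisely these and whose sole exceptional outcome $T^{1/5}s^{-1}<\delta^{-O(1)}$ we have excluded; the orbit is then $\delta$-equidistributed w.r.t.\ the volume measure, and here $y_T<\delta^{-O(1)}s/T$ forces $y_T^{-1}$ large, so this matches Theorem \ref{dani-continuous}.

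The remaining (``near'') regime is $y_T\gg\delta^{O(1)}s^{-2}T^{-2\theta-o(1)}$ and $|W_T|y_T\gg\delta^{O(1)}(T/s)^{2-o(1)}y_T^{2\theta}$. Here I would Taylor-expand \eqref{parametrization-piece-flat} about the highest point $g_T$. Combining $y_T\le\delta^{-O(1)}$ with $s\le\delta^{O(1)}T^{1/5}$ and $N$ large forces $|W_T|\gg\delta^{-O(1)}T^{3/2}$, so the error $O(W_T^{-1})$ in \eqref{parametrization-piece-flat}, the drift $O(T/|W_T|)$ of the $a$-coordinate away from $a(y_T)$, and the cubic and higher Taylor terms are all $\ll\delta$; thus $\Gamma g_0 h(s)^n$ agrees to within $O(\delta)$ with the quadratic sequence $s_n=\alpha+\beta n+\omega n^2+iy_T$ with $\alpha=\alpha_T+y_TW_T$, $|\beta|\asymp sy_T$, $\omega\asymp s^2y_T/|W_T|$, and Proposition \ref{near-to-closed-horo} applies. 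Its condition $\min(y_T^2/(N\omega),\,Ny_T)<\delta^{-O(1)}$ does not occur: a short computation using $s\le\delta^{O(1)}T^{1/5}$ (hence $s\ll\delta^{O(1)}N^{1/4}$) and $\theta<1/5$ shows both $Ny_T$ and $y_T^2/(N\omega)\asymp y_T|W_T|/(Ns^2)$ tend to infinity. Its other exceptional condition is \eqref{HeckeCondition} for $(\alpha,\beta,\omega)$; since $N^2q\omega\asymp(sN)^2q\theta_Ty_T$ this coincides with condition (ii) of Lemma \ref{period-made-explicit-discrete} for $g_0$ --- the passage between the parameters of $g_T$ and those of $g_0$ being the content of that lemma --- and is therefore excluded. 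Hence $s_n$, and so $\Gamma g_0 h(s)^n$ on each subinterval of length $N/j$, is $\delta$-equidistributed w.r.t.\ the algebraic measure on the closed horocycle of period $y_T^{-1}$; since $y_T\le\delta^{-O(1)}$, Lemma \ref{period-made-explicit-continuous} shows this is again the measure of Theorem \ref{dani-continuous}, with the same $j$.

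I expect the genuine analytic difficulty to have been discharged already in Propositions \ref{near-to-closed-horo}--\ref{far-from-closed-horo}; the main obstacle in this final step is purely bookkeeping: verifying that the two excluded regimes of Proposition \ref{intermediate-case} together with its ``volume'' regime are covered without gaps by Propositions \ref{far-from-closed-horo} and \ref{near-to-closed-horo} once the various $\delta^{-O(1)}$ and $T^{o(1)}$ factors are reconciled; checking that the $\min$-condition of Proposition \ref{near-to-closed-horo} is vacuous in the near regime; and matching \eqref{HeckeCondition} with the conditions of Lemma \ref{period-made-explicit-discrete}, as well as the resulting measure and cutoff parameter $j$ with those of Theorem \ref{dani-continuous}. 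The hypothesis $s\le\delta^{O(1)}T^{1/5}$ together with $\theta<1/5$ is precisely what keeps all of these inequalities consistent.
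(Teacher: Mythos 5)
Your proposal is correct and follows essentially the same route as the paper: split into the three regimes via Proposition \ref{intermediate-case}, discharge the first with Proposition \ref{far-from-closed-horo} and Theorem \ref{dani-continuous}, then in the remaining ``near'' regime derive $|W_T|\gg\delta^{-O(1)}T^{3/2}$ and $y_T\gg\delta^{-O(1)}/N$ from $s\le\delta^{O(1)}T^{1/5}$ and $\theta<1/5$, approximate the orbit by the quadratic sequence, apply Proposition \ref{near-to-closed-horo}, and match its exceptional condition \eqref{HeckeCondition} against condition (ii) of Lemma \ref{period-made-explicit-discrete}. Your write-up is more explicit than the paper's in tracking the case analysis and verifying that the $\min$-condition of Proposition \ref{near-to-closed-horo} is vacuous, but it is not a different argument.
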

\begin{proof} Let $\xi = \Gamma g$.  As before, the case $\delta^{-O(1)}>T=sN$ is trivial. Otherwise, notice that if we show that $\xi h(s)^n, n\le N$ is $\delta$-equidistributed w.r.t the probability measure carried by $\xi h(t), t\in [0,T]$, by Theorem \ref{implicit-dani-continuous} we are done. Thus, by Propositions \ref{far-from-closed-horo} and \ref{intermediate-case} we get the result unless both $y_T\delta^{-O(1)}>s^{-2}T^{-2\theta-o(1)}$ and $|W_{T} y_T|\delta^{-O(1)} >  (T/s)^{2-o(1)} y_T^{2\theta}$, with $T=sN$. In this case, we have that
\[
y_T>\delta^{-O(1)}/N, \hspace{30pt} |W_T y_T|> \delta^{-O(1)} (y_T T^{3/2}+sT),
\]
in the range $\delta^{-O(1)} s<T^{1/5}$, since $\theta<1/5$, so
\[
\Gamma gh(s)^n=h(x+y_T sn+y_T\frac{s^2n^2}{W_T})a(y_T)+O(\delta^2) 
\]
and we can apply Proposition \ref{near-to-closed-horo}, which gives that $\xi h(s)^n, n\le N$ is $\delta$-equidistributed w.r.t. the algebraic measure on the closed horocycle of period $y_T^{-1}$
unless there exists an integer $q< (\delta/\tau(q_2))^{-O(1)}y_T^{-1/2}$ such that
\[
\|qx\|+N\|qsy_T\|+(sN)^2 q y_T/|W_T| < y_T^{\frac 12} (\delta/\tau(q_2))^{-O(1)}.
\]
Actually we must have $q>(\delta/\tau(q_2))^{O(1)}y_T^{-1/2}$ because $\xi$ is fixed. But these are the conditions in Lemma \ref{period-made-explicit-discrete}.
\end{proof}
\begin{remark}
 The proof actually shows that $\xi h(s)^n, n\le N$ is $\delta$-equidistributed w.r.t the measure carried by $\xi h(t), t\in[0,sN]$ unless either $(sN)^{1/5}s^{-1}<\delta^{-O(1)}$ or the conditions in Lemma \ref{period-made-explicit-discrete} are satisfied.
\end{remark}

As a corollary of this result we obtain Dani's Theorem for discrete orbits.

\begin{corollary}[Dani Theorem]\label{dani}
Let $\xi\in X$ with $\alpha$ irrational. Then $\xi h(s)^n$, $n\le N$ becomes equidistributed w.r.t the volume measure on $X$ as $N$ goes to infinity.
\end{corollary}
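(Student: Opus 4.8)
The plan is to deduce this from the effective discrete Dani theorem, Theorem \ref{dani-discrete}, by letting the quality parameter $\delta$ tend to $0$. Fix $s>0$ and a representative $g\in D_X$ of $\xi$, so that $g$ has bounded entries and $\alpha:=\alpha(g)$ is irrational; fix $\delta\in(0,1/2)$. Theorem \ref{dani-discrete} produces an integer $j<\delta^{-O(1)}$ and asserts that $\xi h(s)^n$, with $n$ ranging over the integers in any subinterval of $[0,N]$ of length $N/j$, is $\delta$-equidistributed w.r.t.\ a non-discrete algebraic measure, unless one of three exceptional alternatives holds; moreover that measure is the one of Theorem \ref{dani-continuous}. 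I will check that for $N$ larger than a threshold $N_0=N_0(\delta,s,\xi)$ none of the three alternatives can hold and the measure in question is the volume measure $\mu_G$. Granting this, split $\{1,\dots,N\}$ into the $j$ consecutive blocks $B_i=\{\,n:\ (i-1)N/j<n\le iN/j\,\}$; each $B_i$ is the set of integers in a subinterval of length $N/j$, so $\E_{n\in B_i}f(\xi h(s)^n)=\int_X f\,d\mu_G+O(\delta\|f\|_{\mathrm{Lip}})$ for every bounded Lipschitz $f$. Averaging over $i$ and using $|B_i|=N/j+O(1)$ gives, for $N$ large (depending only on $j,\delta$), $\E_{n\le N}f(\xi h(s)^n)=\int_X f\,d\mu_G+O(\delta\|f\|_{\mathrm{Lip}})$. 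Since $\delta$ was arbitrary, $\E_{n\le N}f(\xi h(s)^n)\to\int_X f\,d\mu_G$ for every bounded Lipschitz $f$ on $X$; as the empirical measures and $\mu_G$ are probability measures on the metric space $X$, and bounded Lipschitz functions are convergence-determining for weak convergence of probability measures, this is precisely equidistribution w.r.t.\ the volume measure.

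It remains to rule out the three alternatives for $N$ large. \textbf{(1)} $(sN)^{1/5}s^{-1}=s^{-4/5}N^{1/5}\to\infty$, so $(sN)^{1/5}s^{-1}<\delta^{-O(1)}$ fails once $N>N_1(\delta,s)$. \textbf{(2)} Suppose the conditions of Lemma \ref{period-made-explicit-discrete} held. By the Remarks after that lemma, the integer $q$ and the entries of $\gamma$ are then bounded by $(1+s)^{o(1)}\delta^{-O(1)}=:M$, a quantity depending only on $s$ and $\delta$; so it suffices to show that, for each of the finitely many pairs $(q,\gamma)$ obeying these bounds, $g':=\gamma g$ cannot satisfy both $\|q\tfrac{s}{R'}\|<M N^{-1}$ and $\|[q\tfrac{s}{R'}]\alpha'\|<M(sN^2)^{-1}$ once $N$ is large, where $R'=R(g')$ and $\alpha'=\alpha(g')$ are fixed once the pair is fixed. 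If $qs/R'\notin\Z$, the first inequality fails for $N$ large since $\|qs/R'\|$ is a fixed positive number. If $qs/R'=m\in\Z$ (and $m\ge1$ because $q,s,R'>0$), then $\alpha'$, being the image of the irrational $\alpha$ under the fractional linear action of $\gamma\in SL_2(\Z)$ (see \eqref{gammaMultiplication}), is irrational; hence $m\alpha'$ is irrational and the second inequality fails for $N$ large. Taking the maximum over these finitely many pairs gives $N_2(\delta,s,\xi)$. \textbf{(3)} By the last sentence of Theorem \ref{dani-discrete}, the relevant algebraic measure is the one of Theorem \ref{dani-continuous}, which by that theorem equals $\mu_G$ unless there is an integer $q<\delta^{-O(1)}$ with $\|q\alpha\|<\delta^{-O(1)}(sN)^{-1}$; as $\|q\alpha\|$ is a fixed positive number for each of the finitely many such $q$, this fails once $N>N_3(\delta,\xi)$. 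Put $N_0=\max(N_1,N_2,N_3)$.

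The routine parts are the block-averaging and the identification of the resulting convergence with equidistribution. The only step needing care is alternative (2): one must combine the fact that the arithmetic conditions of Lemma \ref{period-made-explicit-discrete} carry bounds depending only on $s$ and $\delta$ — so that only finitely many configurations $(q,\gamma)$ need to be excluded — with the fact that irrationality of $\alpha$ is preserved under the $\Gamma$-action on $\partial\mathbb{H}$, so that each configuration fails for all large $N$ by a fixed positive margin against a right-hand side of order $N^{-2}$. This is precisely the discrete counterpart of the derivation of Dani's theorem for the continuous flow recorded after Theorem \ref{dani-continuous}.
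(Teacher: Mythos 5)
Your proof is correct and follows the route the paper intends: Theorem \ref{dani-discrete} gives effective $\delta$-equidistribution on subintervals of length $N/j$ with $j<\delta^{-O(1)}$, unless one of the exceptional alternatives holds, and you show that for $\xi$ fixed with $\alpha$ irrational each alternative fails once $N$ is large enough (depending on $\delta$, $s$, $\xi$); then block-averaging and letting $\delta\to0$ gives weak convergence to $\mu_G$, with the bounded-Lipschitz class being convergence-determining so that no mass escapes to the cusp. This is precisely the discrete analogue of the one-paragraph derivation the paper gives after Theorem \ref{dani-continuous}, and your handling of alternative (2) — reducing to finitely many configurations $(q,\gamma)$ and using that the $SL_2(\Z)$-action on $\partial\mathbb H$ preserves irrationality of $\alpha$ — is the right way to make it precise.
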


\section{Effective equidistribution:  non-continuous algebraic measures}\label{section-discrete-measures}

In the previous section we gave necessary conditions on $\xi$ for the probability measure carried by any large piece of the orbit $\xi h(s)^n$, $n\le N$ to be near to some continuous algebraic measure. This is all we need in order to prove our results for primes. However, it remains to answer the following questions: is it possible for that measure to be always near to an---either  continuous or not---algebraic measure?  Are the conditions on $\xi$ really sharp for the measure to be near to a continuous algebraic measure?  In this section we will show that the answer to both questions is essentially ``yes'', as long as $(sN)^{1/5}s^{-1}>\delta^{-O(1)}$.

Theorem \ref{dani-discrete} says that, for $(sN)^{1/5}s^{-1}>\delta^{-O(1)}$, if the sequence $\xi h(s)^n$, $n\le N$  is not equidistributed w.r.t. a  continuous algebraic measure, then it is near to a sequence
\begin{equation}\label{sn-definition}
s_n=\frac{A+Bn}{q}+ \epsilon_4   \frac{\epsilon_1+\epsilon_2\frac nN +\epsilon_3 (\frac nN)^2+i\epsilon_4}{q^2} 
\end{equation}
for some integers $A, B, q$ with $(A, B, q)=1$ and $\epsilon_j \ll (\tau(q_2)\delta^{-1})^{O(1)}$, with $\frac{B}{q}=\frac{a_2}{q_2}$, $(a_2,q_2)=1$. 
We also have $\epsilon_4/q^2\gg (1+s)^{-2-o(1)}$ from the remarks after Lemma \ref{period-made-explicit-discrete}.
A key ingredient to understand $s_n$ is going to be to understand its restriction to an interval $L< n \le L+q$; there it is very near to a sequence
\begin{equation}\label{rationals-1}
\frac{A+Bn}q+ M_2(\frac{M_1}{q^2}+i\frac{M_2}{q^2})    \hspace{20pt}   \text{with } n \text{ mod } q_2,
\end{equation}
where $M_1, M_2 \ll \tau(q_2)^{O(1)}$. We can describe the points 
\[
\frac{A+Bn}q          \hspace{20pt}    n\text{ mod } q_2
\]
$\text{ mod } 1$ in a more convenient way. First, we can write 
\[
\frac{A+q_1 a_2 n}{q_1q_2}          \hspace{20pt}    n\text{ mod } q_2
\]
with $q_1=q/q_2$ and then $(A, q_1)=1$. Since $a_2$ is coprime to $q_2$ we can write
\[
\frac{A+q_1 n}{q_1q_2}          \hspace{20pt}    n\text{ mod } q_2.
\]
Next, we can write $q_2=q_2' q_2''$ with $q_2'$ coprime to $q_1$ and $q_1$ a multiple of every prime dividing $q_2''$. Since $(q_1,q_2')=1$, the equation
\[
A +q_1 x \equiv 0 \text{ mod } q_2'
\]
has solution in $x$, and then we can write
 \[
\frac{q_2' h + q_1 n}{q_1q_2} \hspace{20pt}   \text{with } n \text{ mod } q_2
\]
with $(h,q_1)=1$ and then $(h,q_1q_2'')=1$. Finally, writing $m=vq_2'+uq_2''$ with $u,v$ integers the points in \eqref{rationals-1} can be described as
\[
\frac u{q_2'} +\frac v{q_2''} + \frac{h}{q_1q_2''}+M_2(\frac{M_1}{q^2}+i\frac{M_2}{q^2})  \hspace{20pt}    u \text{ mod }q_2' ,\, \, v \text{ mod } q_2''.
\]
To further study the behaviour of these points in $X$ it is natural to split them into classes $C_l$ for any $l$ a divisor of $q_2'$, each $C_l$ corresponding to the points such that $(u, q_2')=l$. 

For any point $g$ of $SL(2,\mathbb R)$ of the form;
\[
g=(\frac{b}{q}+M_2 \frac{M_1+iM_2}{q^2},0)
\]
with $(b,q)=1$, we can multiply to the left by $\gamma_{b/q}\in SL(2,\mathbb Z)$, with
\[
\gamma_{b/q} = 
\begin{bmatrix}
-\overline b &  *  \\
q    &     -b
\end{bmatrix} 
\]
obtaining
\begin{equation}\label{up}
g^*=\gamma_{b/q} g = (-\frac{\overline b}q -\frac{M_1/M_2}{M_1^2+M_2^2} + \frac{i}{M_1^2+M_2^2},  -2\arccot  \frac{M_1}{M_2}).
\end{equation}
So in our setting it is easy to see that $g\mapsto g^*$ sends the points in $C_1$ to
\[
(\frac{u}{q_2'}+\frac{\overline{q_2'}^2 \overline{q_1 v+h}}{q_1q_2''} -\frac{M_1/M_2}{M_1^2+M_2^2} + \frac{i}{M_1^2+M_2^2},  -2\arccot  \frac{M_1}{M_2}).
\]
where $\overline{x}$ is the inverse modulo $q_1q_2''$.  One can rewrite these points as
\[
(\frac{u}{q_2'}+\frac{\overline{q_2'}^2 (q_1 v+\tilde h)}{q_1q_2''} -\frac{M_1/M_2}{M_1^2+M_2^2} + \frac{i}{M_1^2+M_2^2},  -2 \arccot  \frac{M_1}{M_2}).
\]
with $\tilde h$ an inverse of $h$ modulo $q_1$. But then this is the same as
\[
(\frac n{q_2}+\frac{\overline{q_2'}^2 \tilde h}{q_1q_2''} -\frac{M_1/M_2}{M_1^2+M_2^2} + \frac{i}{M_1^2+M_2^2}, -2\arccot  \frac{M_1}{M_2})
\]
with $n \modu q_2$, $(n,q_2')=1$.
In general, for any $l\mid q_2'$, everything works the same way but dividing $q_2', M_1$ and $M_2$ by $l$, and then the points of $C_l$ can be seen as
\begin{equation}\label{lines}
(\frac{n}{q_2/l}+l^2 z,  -2\arccot  \frac{M_1}{M_2}),
\end{equation}
with $n \modu q_2/l$, $(n,q_2'/l)=1$ and
\[
z=\frac{\overline{q_2'}^2 \tilde h}{q_1q_2''} -\frac{M_1/M_2}{M_1^2+M_2^2} + \frac{i}{M_1^2+M_2^2}.
\]

We are finally prepared to state the main result concerning the near to a closed horocycle case

\begin{proposition}\label{discrete-measures}
Let $0<\delta<1/2$. There exists $j< \delta^{-O(1)}$ such that when $n$ is restricted to any subinterval of $n\le N$ of length $N/j$ the sequence $s_n$ is  $\delta$-equidistributed w.r.t. some algebraic measure in $X$. 
\end{proposition}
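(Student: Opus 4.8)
The plan is to use the explicit description built up above: after restricting to a suitable subinterval the sequence $s_n$ becomes exactly periodic up to an error $\delta^{O(1)}$, and its empirical measure is then the uniform measure on a periodic orbit of the horocycle flow — a type (i) algebraic measure — whose location in the fundamental domain is precisely the one recorded by the classes $C_l$ and formula \eqref{lines}.

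\emph{Step 1: passing to a periodic sequence.} Take $j$ to be a sufficiently large fixed power of $\delta^{-1}$; this is legitimate because in the range where \eqref{sn-definition} is produced (cf.\ the proof of Theorem \ref{dani-discrete} and the remarks after Lemma \ref{period-made-explicit-discrete}) the integers $q,q_2$ and the $\epsilon_i\ll(\tau(q_2)\delta^{-1})^{O(1)}$ are all $\ll\delta^{-O(1)}$ while $\epsilon_4\ge 1$. On a subinterval $J\subset\{1,\dots,N\}$ of length $N/j$ the bound $|\frac{d}{dn}\big(\epsilon_4(\epsilon_2\frac nN+\epsilon_3(\frac nN)^2)/q^2\big)|\ll\epsilon_4(\epsilon_2+\epsilon_3)/(q^2N)$ shows that the linear and quadratic parts of $s_n$ vary over $J$ by less than $\delta^{100}\epsilon_4^2/q^2=\delta^{100}\,\Im s_n$; since $\Im s_n$ is constant in $n$, the point $s_n$ stays within $\delta^{50}$ (in $d_X$) of the sequence $\frac{A+Bn}{q}+M_2\frac{M_1+iM_2}{q^2}$ of \eqref{rationals-1}, the $M_i$ being the slow part frozen at the centre of $J$. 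This auxiliary sequence is periodic in $n$ of period $q_2$, and as $N/j\ge q_2$ for $N$ not too small (which we may assume) it attains each of its $q_2$ distinct values $P_r$, $r$ mod $q_2$, with equal frequency up to $O(q_2 j/N)<\delta$.

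\emph{Step 2: identifying the limit measure.} Since $B/q=a_2/q_2$ with $(a_2,q_2)=1$, the set $\{P_r\}$ is a coset of the cyclic group $\frac1{q_2}\Z/\Z\subset\R/\Z$ at height $M_2^2/q^2$; hence $\{\Gamma P_r:r\bmod q_2\}$ is exactly the periodic orbit $\{\Gamma\zeta\,h(s')^k:0\le k<q_2\}$ with $\zeta=(\frac Aq+\frac{M_1M_2}{q^2}+i\frac{M_2^2}{q^2},0)$ and $s'=q^2/(q_2M_2^2)$, and the candidate measure $\mu_J:=\frac1{q_2}\sum_{r\bmod q_2}\delta_{\Gamma P_r}$ is the uniform — type (i) — algebraic measure it carries. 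What must still be checked is that the $q_2$ points stay distinct in $X$, so that $\mu_J$ is genuinely uniform after reduction to a fundamental domain; this is where the decomposition into the classes $C_l$, $l\mid q_2'$, and the transformation formula \eqref{up} enter. For each $l$ one multiplies the points of $C_l$ on the left by the appropriate $\gamma_{b/q}\in SL_2(\Z)$ and, as recorded in \eqref{lines}, obtains an arithmetic progression $(\frac n{q_2/l}+l^2z,\,-2\arccot\frac{M_1}{M_2})$ of $|C_l|$ well-separated points lying on a single closed horocycle at height $\asymp l^2/(M_1^2+M_2^2)$; as $l$ ranges over the divisors of $q_2'$ these progressions partition $\{\Gamma P_r\}$, so there are no collisions. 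Combined with Step 1 this gives that $s_n$ restricted to $J$ is $\delta$-equidistributed with respect to $\mu_J$. (If instead the quadratic coefficient in \eqref{sn-definition} is ``generic'', Proposition \ref{near-to-closed-horo} applied to $s_n$ itself already yields $\delta$-equidistribution on the closed horocycle of height $\Im s_n$, a type (ii) algebraic measure.)

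\emph{Expected main obstacle.} The conceptual heart, and the only laborious part, is the verification in Step 2 that the arithmetic data $(A,B,q)$ and $(M_1,M_2)$ force the reductions of the $q_2$ points to split cleanly into the coprimality-restricted progressions \eqref{lines} with the correct multiplicities and with no overlap; this rests on the congruence manipulations with $q=q_1q_2$, $q_2=q_2'q_2''$ and the inverses $\overline{q_2'}^{\,2}\tilde h$ carried out just before the statement. Keeping $j<\delta^{-O(1)}$ and the auxiliary error $O(q_2 j/N)$ below $\delta$ are then routine, all the data being $\delta^{-O(1)}$-bounded and $N$ assumed large.
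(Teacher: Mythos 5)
Your Step 1 rests on the claim that $q$, $q_2$, and the $\epsilon_i$'s are all $\ll\delta^{-O(1)}$ while $\epsilon_4\ge 1$, and this is where the argument breaks. Neither of these assertions holds. The proof of Theorem~\ref{dani-discrete} only gives $q<(\delta/\tau(q_2))^{-O(1)}y_T^{-1/2}$, so $q$ — and hence $q_2\mid q$ — can be as large as $\sqrt{sN}$ up to $\delta^{-O(1)}$ and $\tau(q_2)$ factors; correspondingly $\tau(q_2)$ may be of size $q_2^{o(1)}=N^{o(1)}$, which is not $\delta^{-O(1)}$-bounded. The only lower bound on $\epsilon_4$ one gets from the same source is $\epsilon_4\gg(\delta/\tau(q_2))^{O(1)}$, not $\epsilon_4\ge 1$. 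Consequently the drift of the slow phase $b_n=\epsilon_1+\epsilon_2(n/N)+\epsilon_3(n/N)^2$ over a subinterval of length $N/j$, measured in the hyperbolic metric at height $\Im s_n=\epsilon_4^2/q^2$, is of size $(\epsilon_2+\epsilon_3)/(j\epsilon_4)$, and when $\tau(q_2)$ is large this cannot be made $<\delta$ with $j<\delta^{-O(1)}$. So $s_n$ restricted to $J$ is \emph{not} pointwise within $\delta$ of the frozen periodic sequence, and the whole reduction to $\mu_J$ collapses. (The parenthetical at the end of Step 2 also does not help: in the setting of Proposition~\ref{discrete-measures} the resonance condition~\eqref{HeckeCondition} \emph{is} satisfied, so Proposition~\ref{near-to-closed-horo} does not yield equidistribution.)

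The paper's proof handles exactly this by \emph{not} freezing the slow phase pointwise. It splits into two regimes. When $q_2\ll\delta^{-O(1)}$, it constructs the auxiliary periodic sequence $r_n$ with the $b$-value frozen at $N'$ and an extra large prime factor $p$ in the denominator, then shows — via the inversion~\eqref{up} and a classical exponential-sum estimate for the phase $k\epsilon_4^{-1}q_0^2/b_n^2$ — that both $s_n$ and $r_n$ restricted to each arithmetic progression mod $q_2$ inside $J$ are $\delta$-equidistributed on the \emph{same} closed horocycle, hence $s_n$ is $\delta$-equidistributed w.r.t.\ the algebraic measure carried by $r_n$. When $q_2>\delta^{-O(1)}$ (the case your Step~1 implicitly excludes), it uses the class decomposition $C_l$, $l\mid q_2'$, retaining only $l<\tau(q_2')\delta^{-O(1)}$, and within each $C_l$ it proves equidistribution on the corresponding closed horocycle by counting integers coprime to $q_2'/l$ in short intervals; the key inequality $\epsilon j\phi(d)\gg\delta^{-O(1)}\tau(d)$ (with $d=q_2'/l$) is what survives the potentially large $\tau(q_2)$ factors, precisely because $q_2$ is large in this regime. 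Your Step~2's observation about the $C_l$-partition is on the right track, but the conclusion you draw from it (distinctness of the $q_2$ points, hence $\mu_J$ is the target measure) misses what actually needs to be shown, namely equidistribution of the restricted progressions on closed horocycles with effective error control.
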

\begin{proof}
 If $|\epsilon_2|+|\epsilon_3|<\delta^{-O(1)} \epsilon_4$ then $s_n$ and $s_m$ are at distance $O(\delta^{-O(1)} |n-m|/N)$, hence the result follows---each algebraic measure is 
supported on a point. Thus, from now on we assume $|\epsilon_2|+|\epsilon_3|\gg \delta^{-O(1)} \epsilon_4$. 
 
  Let us treat first the case $q_2\ll \delta^{-O(1)}$; if $n$ is in an interval $J$ of length $\delta^{c}N$ containing the point $N'$ (we should choose $N'$ such that $t=N'/N$ makes $|\epsilon_1+\epsilon_2 t+\epsilon_3 t^2|\gg|\epsilon_1|+|\epsilon_2|+|\epsilon_3|$), we select the sequence
\[
r_n=\frac{A+Bn}{pq} + \epsilon_4   \frac{\epsilon_1+\epsilon_2\frac {N'}N +\epsilon_3 (\frac {N'}N)^2+i\epsilon_4}{(pq)^2}      \hspace{30pt}  n \text{ mod } pq_2
\]
for any prime $p> \delta^{-O(1)}$, which carries an algebraic measure $\mu$.  Let us see that $s_n$ in the interval $J$ is $\delta$-equidistributed w.r.t. $\mu$.  For that, let us split $J$ into arithmetic progressions $n\equiv n_0 \modu q_2$; for each one we have (due to (\ref{up}))
\[
\Gamma s_n=\Gamma (x_0-\frac{b_n}{\epsilon_4}\frac{q_0^2}{b_n^2+\epsilon_4^2}+\frac{i q_0^2}{b_n^2+\epsilon_4^2}, - 2\arccot \frac{b_n}{\epsilon_4} )
\]
for $b_n=\epsilon_1+\epsilon_2\frac nN +\epsilon_3 (\frac nN)^2$ and $x_0$ and $q_0\mid q$ depending on $n_0$. Then
\[
\Gamma s_n = x_0-\frac{\epsilon_4^{-1}q_0^2 }{b_n^2}+\frac{i q_0^2}{b_{N'}^2}+O(\delta)
\]
for most $n$'s in the arithmetic progression contained in $J$. Since $\delta^{-O(1)} < |\epsilon_4^{-1}|<\delta^{-O(1)} s^{2+o(1)}<N^{O(1)}$ by classical methods in exponential sums one can prove (see for instance \cite{iwaniec-kowalski})
\[
\E_{n\in J, n\equiv n_0 \modu{q_2}} e(k \frac{\epsilon_4^{-1}q_0^2 }{b_n^2} ) \ll \delta^{c'}
\]
for large $c'$ and any $k<\delta^{-O(1)}$. Then, we have that $s_n$, $n\in J$, $n\equiv n_0 \modu q_2$ is $\delta$-equidistributed w.r.t. the measure carried by
\[
t+i\frac{q_0^2}{b_{N'}^2}      \hspace{20pt}    t\in [0,1].
\]
On the other hand we have, again by (\ref{up}), that $r_n$
with $n\in J$, $n\equiv n_0 \modu q_2$ is $\delta$-equidistributed w.r.t. the measure carried by
\[
(x_0'+\frac{m}{p}-\frac{b_{N'}}{\epsilon_4}\frac{q_0^2}{b_{N'}^2+\epsilon_4^2}+i\frac{q_0^2}{b_{N'}^2+\epsilon_4^2}, - 2\arccot \frac{b_{N'}}{\epsilon_4})   \hspace{20pt} m  \modu p
\]
for some $x_0'$ depending on $n_0$. Since any two consecutive points of this sequence are at distance $p^{-1}\delta^{-O(1)}<\delta^2$, we have that $r_n$ $n\in J$, $n\equiv n_0 \modu q_2$ is $\delta$-equidistributed w.r. t. the measure carried by
\[
t+i\frac{q_0^2}{b_{N'}^2}      \hspace{20pt}    t\in [0,1],
\]
so $s_n$, $n\in J$ is $\delta$-equidistributed w.r.t. $\mu$. 

It remains the case $|\epsilon_2|+|\epsilon_3|> \delta^{-O(1)} \epsilon_4$ and $q_2> \delta^{-O(1)}$. In this case we can take the algebraic measure $\mu$ carried by
\[
r_n=\frac{A+Bn}q+ \epsilon_4   \frac{\epsilon_1+\epsilon_2\frac {N'}N +\epsilon_3 (\frac {N'}N)^2+i\epsilon_4}{q^2}      \hspace{30pt}  n \text{ mod } q_2,
\]
to approximate $s_n$, $n\in J$. To see that $s_n$, $n\in J$ is $\delta$-equidistributed w.r.t. $\mu$, we shall show the same restricting $n$ to any subinterval of $J$ of length $q_2$; since $q<\delta^{-O(1)}y^{-\frac 12}$ and $y^{-1}\ll sN$ we have that $q<\delta N$; then $s_n$ restricted to a subinterval of $J$ of length $q_2$ satisfies
\[
s_n= \frac{A+Bn}q+ \epsilon_4   \frac{\epsilon_1+\epsilon_2\frac {N''}N +\epsilon_3 (\frac {N''}N)^2+i\epsilon_4}{q^2} +O(\delta)      \hspace{30pt}  n \text{ mod } q_2,
\]
with $N''\in J$. Now we restrict $n$ further to the set $C_l$, for some $l\mid q_2'$, of $n$'s satisfying $(n,q_2')=l$; it is necessary to look just to the $l$'s with $l<\tau(q_2')\delta^{-O(1)}$, because the others give a negligible contribution. Due to (\ref{lines}) the restriction of $r_n$ to $C_l$ is
\begin{equation}\label{line}
(\frac{n}{q_2/l}+l^2 x_1-\frac{b_{N'}}{\epsilon_4}\frac{l^2}{b_{N'}^2+\epsilon_4^2} + i \frac{l^2}{b_{N'}^2+\epsilon_4^2},  -2\arccot  \frac{b_{N'}}{\epsilon_4}).
\end{equation}
with $n \modu q_2/l$, $(n,q_2'/l)=1$, and $x_1=\frac{\overline{q_2'}^2 \tilde h}{q_1q_2''}$. 
Now, it is easy to show that for any $x, \epsilon>0$ and $j, d \in \N$ we have
\[
\sum_{x<n\le x+\epsilon jd, (n,d)=1} 1 = \epsilon \sum_{n\le j d, (n,d)=1} 1   +  O(\tau(d)). 
\]
Applying it for $d=q_2'/l$, $j=q_2/q_2'$ and $\epsilon=\delta l^2/(b_{N'}^2+\epsilon_4^2)$, since  
$\epsilon j \phi(d)\gg \delta^{-O(1)}\tau(d)$, we have
that the sequence in (\ref{line}) is $\delta$-equidistributed w.r.t. the measure carried by
\[
(t+i \frac{l^2}{b_{N'}^2+\epsilon_4^2}, -2\arccot  \frac{b_{N'}}{\epsilon_4})    \hspace{20pt}  t\in [0,1].
\]
One can proceed in the same way for the restriction of $s_n$, obtaining that it is $\delta$-equidistributed w.r.t. the measure carried by
\[
(t+i \frac{l^2}{b_{N''}^2+\epsilon_4^2}, -2\arccot  \frac{b_{N''}}{\epsilon_4})    \hspace{20pt}  t\in [0,1].
\]
But, since both $N'$ and $N''$ are in $J$ we have that both measures are similar, and then $s_n$, $n\in J$ is $\delta$-equidistributed w.r.t. $\mu$. 
\end{proof}

As a corollary of Theorem \ref{dani-discrete} and Proposition \ref{discrete-measures} we obtain

\begin{theorem}[Effective equidistribution]\label{effective-equid}
Let $\xi \in X$
fixed. Let $0<\delta <1/2$, $N>1$ and $s>0$.
There exists a positive integer $j<\delta^{-O(1)}$ such that the sequence $\xi h(s)^n$, with $n$ in any subinterval of $n\le N$ of length $N/j$, is $\delta$-equidistributed w.r.t. an algebraic measure on $X$, unless $(sN)^{1/5}s^{-1}<\delta^{-O(1)}$.
\end{theorem}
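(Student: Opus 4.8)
The plan is to deduce this directly from Theorem \ref{dani-discrete} and Proposition \ref{discrete-measures}, following the trichotomy set up in Section \ref{horocycle-approximation} and made explicit at the beginning of Section \ref{section-discrete-measures}. Fix $\xi=\Gamma g\in X$, and let $C$ be a large absolute constant to be pinned down at the end; if $(sN)^{1/5}s^{-1}<\delta^{-C}$ there is nothing to prove, so assume $(sN)^{1/5}s^{-1}\ge \delta^{-C}$. First I would apply Theorem \ref{dani-discrete} with $\delta/2$ in place of $\delta$. This produces an integer $j_0<\delta^{-O(1)}$ such that, \emph{unless} the conditions of Lemma \ref{period-made-explicit-discrete} hold, $\xi h(s)^n$ restricted to any subinterval of $\{n\le N\}$ of length $N/j_0$ is $(\delta/2)$-equidistributed with respect to a non-discrete algebraic measure on $X$; in that case the theorem holds with $j=j_0$.

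It remains to treat the complementary case, in which the conditions of Lemma \ref{period-made-explicit-discrete} are satisfied. Here Theorem \ref{dani-discrete} together with the discussion opening Section \ref{section-discrete-measures} --- the normalisation of $\gamma g$ recorded just before \eqref{sn-definition} --- shows that $\xi h(s)^n$ stays within $O(\delta)$ of a sequence $s_n$ of the form \eqref{sn-definition}, uniformly for $n\le N$, with $(A,B,q)=1$, $B/q=a_2/q_2$ in lowest terms, and $q,\epsilon_j$ bounded as described there; in fact this error is $O(\delta^2)$, so for $C$ large we may assume it is $\le \delta/4$. Under the standing assumption the parameters of $s_n$ lie in the ranges used in the proof of Proposition \ref{discrete-measures} (in particular $\epsilon_4/q^2\gg(1+s)^{-2-o(1)}$), so applying that proposition with $\delta/4$ in place of $\delta$ yields an integer $j_1<\delta^{-O(1)}$ such that $s_n$, restricted to any subinterval of length $N/j_1$, is $(\delta/4)$-equidistributed with respect to some algebraic measure $\mu$ on $X$.

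The final step is the routine transfer of equidistribution from $s_n$ to $\xi h(s)^n$: for any $f$ with $\|f\|_{\mathrm{Lip}}\le 1$ and any subinterval $J$ of $\{n\le N\}$ of length $N/j_1$,
\[
\Big|\E_{n\in J}f(\xi h(s)^n)-\int f\,d\mu\Big|\le \E_{n\in J}\big|f(\xi h(s)^n)-f(s_n)\big|+\Big|\E_{n\in J}f(s_n)-\int f\,d\mu\Big|<\frac{\delta}{4}+\frac{\delta}{4}\le\delta,
\]
so $\xi h(s)^n$ on subintervals of length $N/j_1$ is $\delta$-equidistributed with respect to $\mu$. Setting $j=j_0$ in the first case and $j=j_1$ in the second gives the theorem, with $j<\delta^{-O(1)}$ since both $j_0$ and $j_1$ are.

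The only point requiring care will be the bookkeeping of constants: one must take the exponent $C$ in the excluded range larger than the finitely many $O(1)$-exponents arising from Theorem \ref{dani-discrete}, from the reduction to \eqref{sn-definition}, and from Proposition \ref{discrete-measures}, and verify that replacing $\delta$ by $\delta/2$ or $\delta/4$ only inflates those exponents. There is no new analytic content: all of the work --- handling the near, intermediate and far regimes via Propositions \ref{near-to-closed-horo}, \ref{intermediate-case} and \ref{far-from-closed-horo}, and analysing the arithmetic of $s_n$ through the classes $C_l$ in Proposition \ref{discrete-measures} --- lies upstream of this statement, so the present argument is essentially a logical assembly.
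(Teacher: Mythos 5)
Your argument is correct and follows precisely the path the paper intends: the theorem is stated as a corollary of Theorem \ref{dani-discrete} and Proposition \ref{discrete-measures}, and your writeup simply makes the trichotomy and the Lipschitz transfer explicit. One small bookkeeping remark: the closeness of $\xi h(s)^n$ to $s_n$ comes from the display $\Gamma gh(s)^n=h(x+y_T sn+y_T s^2n^2/W_T)a(y_T)+O(\delta^2)$ in the proof of Theorem \ref{dani-discrete}, and absorbing the $O(\delta^2)$ error into $\delta/4$ is a matter of taking $\delta$ small (equivalently inflating the implied $O(1)$ exponents), not of taking the excluded exponent $C$ large; but this does not affect the validity of the argument.
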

\begin{remarks}
(a) As shown in the proofs of this section, in the quantitative setting the non-continuous algebraic measures are much more complex than  continuous algebraic ones. Therefore, to be near to a  continuous algebraic measure is a condition that is much stronger than to be near to a general algebraic measure. 

(b) This result gives control over pieces of orbits of the discrete horocycle flow for $s$ not very large. It is possible to show that this control fails for $s>N^{3+\epsilon}$; in fact, taking $x=0$, $y=q^{-2}$, $s=Aq^{-2}$ and $W^{-1}=A^{-2}q^{-3}$  for $A,q$ natural numbers, $q<N^{\epsilon}$, $A>N^{3+\epsilon}$ we have that $gh(s)^n$ is very near to the periodic sequence 
\[
 \frac{n^2}{q}+\frac{i}{q^2}     \hspace{20pt}  n\modu q.
\]
One can show that for certain $q$'s the measure carried by this sequence is not near to any algebraic measure. The same happens for Theorem \ref{dani-discrete}. Perhaps both results remain true for $s<N^{O(1)}$ by substituting algebraic measures by ``polynomial algebraic measures''---and changing the conditions in the statement of Theorem \ref{dani-discrete}---, meaning any measure carried by a periodic sequence $\Gamma h(p(n))a(y)$ with $p$ a polynomial of degree $O(1)$. 
\end{remarks}

\section{Large closure of prime orbits}\label{section-closure-primes}

In this section we are going to deduce Theorem \ref{measure-control} from an upper bound for the sum 
\begin{equation*}
\sum_{p<T} f(xu^p).
\end{equation*}
In order to get such a bound we make use of sieve theory, in particular the following special case of a Selberg's result (see \cite[Theorem~6.4]{iwaniec-kowalski})

\begin{lemma}[Upper bound Sieve]\label{sieve-selberg} Let $(a_n)_{n\le T}$ a sequence of non-negative numbers. For any $d\in \N$ write
\[
\sum_{\substack{n\le T \\ n\equiv 0 \,\mathrm{mod } \, d}} a_n = \frac 1d A + r_d.    
\]
Then, for any $1<D<T$ we have
\[
\sum_{\sqrt{T}< p\le T} a_p \le \frac{A}{\log\sqrt{D}} + \sum_{d< D} \tau_3(d) |r_d|,
\]
with $\tau_3(d)=\sum_{d_1 d_2 d_3 =d} 1$.
\end{lemma}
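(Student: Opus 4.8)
The plan is to obtain this as a specialization of Selberg's $\Lambda^2$ upper bound sieve, in the form of \cite[Theorem~6.4]{iwaniec-kowalski}, applied to the sifted sum over $n\le T$ all of whose prime factors are $\ge\sqrt T$. First I would record the elementary reduction: any $n\le T$ coprime to $P:=\prod_{p<\sqrt T}p$ is either $1$ or a prime in $(\sqrt T,T]$, so since the $a_n$ are non-negative,
\[
\sum_{\sqrt T<p\le T}a_p\ \le\ \sum_{\substack{n\le T\\ (n,P)=1}}a_n\ =:\ S,
\]
and it suffices to bound $S$.

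For $S$ I would run the standard Selberg argument with weights $(\lambda_d)$ supported on squarefree $d\le\xi:=\sqrt D$ and normalised by $\lambda_1=1$: from $\mathbf 1_{(n,P)=1}\le\bigl(\sum_{d\mid(n,P)}\lambda_d\bigr)^2$ one gets $S\le\sum_{d_1,d_2}\lambda_{d_1}\lambda_{d_2}\sum_{[d_1,d_2]\mid n}a_n$, which on inserting $\sum_{d\mid n}a_n=A/d+r_d$ splits into a main term $A\sum_{d_1,d_2}\lambda_{d_1}\lambda_{d_2}/[d_1,d_2]$ and an error $\sum_{d_1,d_2}\lambda_{d_1}\lambda_{d_2}r_{[d_1,d_2]}$. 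The sieve density here is the multiplicative function $g(d)=1/d$. Diagonalising the main quadratic form by the substitution $y_e=\sum_{e\mid d}\lambda_d g(d)$ turns it into $\sum_e\mu^2(e)h(e)^{-1}y_e^2$ with $h(p)=\tfrac1{p-1}$ subject to $\sum_e\mu(e)y_e=1$; the optimal $\lambda$ produce the main term $A/G$ with $G=\sum_{e\le\xi}\mu^2(e)\prod_{p\mid e}\tfrac1{p-1}$, and — the well-known feature of this choice — satisfy $|\lambda_d|\le1$ for every $d$.

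Two elementary computations then match the two pieces to the statement. For the main term, expanding $\tfrac1{p-1}=\sum_{j\ge1}p^{-j}$ gives $\mu^2(e)\prod_{p\mid e}\tfrac1{p-1}=\sum_{\mathrm{rad}(m)=e}m^{-1}$, hence $G=\sum_{\mathrm{rad}(m)\le\xi}m^{-1}\ge\sum_{m\le\xi}m^{-1}\ge\log\xi=\log\sqrt D$; the hypothesis $D<T$ guarantees that every squarefree $e\le\xi$ indeed divides $P$, so nothing is lost in this bound. For the error term, $|\lambda_d|\le1$ together with the fact that $[d_1,d_2]$ ranges over integers $<D$ (as $d_1,d_2\le\sqrt D$) gives
\[
\Bigl|\sum_{d_1,d_2}\lambda_{d_1}\lambda_{d_2}r_{[d_1,d_2]}\Bigr|\ \le\ \sum_{d<D}\#\{(d_1,d_2):[d_1,d_2]=d\}\,|r_d|\ \le\ \sum_{d<D}\tau_3(d)|r_d|,
\]
since for squarefree $d$ each prime divisor can lie in $d_1$ only, in $d_2$ only, or in both, giving $3^{\omega(d)}=\tau_3(d)$ pairs. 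Combining the two bounds with the reduction for $S$ yields the lemma.

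There is no genuine obstacle here: the lemma is a repackaging of the classical Selberg sieve, and in the write-up I would simply cite \cite[Theorem~6.4]{iwaniec-kowalski} and carry out the two specialisations above. The only points needing a line of care are that sifting by all primes $<\sqrt T$ is harmless for primes in $(\sqrt T,T]$, that $\xi=\sqrt D<\sqrt T$ makes the lower bound $G\ge\log\sqrt D$ applicable, and the standard inequality $|\lambda_d|\le1$ for the optimal Selberg weights.
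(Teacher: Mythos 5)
Your proof is correct. The paper itself does not prove this lemma but merely cites \cite[Theorem~6.4]{iwaniec-kowalski}, and your write-up is a correct unwinding of exactly that reference: reducing to the sifted sum over $n$ coprime to $P=\prod_{p<\sqrt T}p$, running Selberg's $\Lambda^2$ sieve with squarefree weights supported on $d\le\sqrt D$ (where $D<T$ ensures $d\mid P$ automatically), using $G=\sum_{e\le\sqrt D}\mu^2(e)/\phi(e)\ge\sum_{m\le\sqrt D}1/m\ge\log\sqrt D$ for the main term, and $|\lambda_d|\le1$ together with $\#\{(d_1,d_2):[d_1,d_2]=d\}=3^{\omega(d)}=\tau_3(d)$ for squarefree $d<D$ for the remainder.
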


So the task now is reduced to have tight control over the sums
\[
\sum_{m\le T/d} f(x(u^d)^m)
\]
for most of the $d$'s in a range $1\le d \le D$ with $D$ as big as possible. This can be handled by the  Theorem \ref{dani-discrete} whenever $D<T^{1/5}$. The precise result that follows is 

\begin{theorem}\label{prime-sum}
Let $\xi\in X$ with $\alpha$ irrational, $f\ge 0$ and $s>0$. Then
\begin{equation}\label{PrimeSumBound}
\E_{p<T} f(\xi h(s)^p) \le 10 \int_X f \, d\mu_G  +  o_T(1)\|f\|_{\mathrm{Lip}}.
\end{equation}
\end{theorem}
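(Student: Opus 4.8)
The plan is to run Selberg's upper bound sieve (Lemma~\ref{sieve-selberg}) on the non-negative sequence $a_n:=f(\xi h(s)^n)$, $n\le T$, at a sieve level $D$ just below $T^{1/5}$. Fix $\delta=\delta(T)=(\log T)^{-C}$ with $C$ a large absolute constant to be chosen at the end; whenever a Sobolev norm is needed, replace $f$ by a smoothing as in \eqref{lipschitz-controls-sobolev}, at the additive cost $O(\delta\|f\|_{\mathrm{Lip}})$. Set $D:=\lfloor s^{-4/5}T^{1/5}\delta^{C_0}\rfloor$ with $C_0$ chosen so that $d\le D$ forces $(sT)^{1/5}/(sd)>\delta^{-O(1)}$ with the implicit exponent occurring in Theorem~\ref{dani-discrete}; since $s$ is fixed and $\log\delta^{O(1)}=O(\log\log T)=o(\log T)$, one has $\log\sqrt D=(\tfrac1{10}+o(1))\log T$. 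Put $A:=\sum_{n\le T}a_n$, so that $r_1=0$ in the notation of Lemma~\ref{sieve-selberg}.

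First I would estimate, for each $d<D$, the divisor sum $\sum_{n\le T,\,d\mid n}a_n=\sum_{m\le T/d}f(\xi h(sd)^m)$, which is a piece of the discrete horocycle orbit of step $sd$ and length $T/d$, hence of total horocycle length $sd\cdot(T/d)=sT$ --- the same for every $d$. Because $d\le D$ defeats the first exceptional alternative of Theorem~\ref{dani-discrete}, the remark following that theorem makes this orbit $\delta$-equidistributed, on $j<\delta^{-O(1)}$ subintervals, with respect to the algebraic measure $\mu^*$ attached by Theorem~\ref{dani-continuous} to the continuous orbit $\xi h(t)$, $t\in[0,sT]$ --- again the same $\mu^*$ for all such $d$ --- provided the Diophantine alternative of Lemma~\ref{period-made-explicit-discrete} fails for this $d$. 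Granting that (see the last paragraph), summing over the $j$ subintervals yields
\[
\sum_{m\le T/d}f(\xi h(sd)^m)=\frac Td\int_X f\,d\mu^*+O\!\Big(\frac Td\,\delta\|f\|_{\mathrm{Lip}}\Big),
\]
so $A=T\int_X f\,d\mu^*+O(T\delta\|f\|_{\mathrm{Lip}})$ and $|r_d|\ll\frac Td\,\delta\|f\|_{\mathrm{Lip}}$.

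Since $\alpha$ is irrational, $\kappa_U(\alpha)\to\infty$ as $U\to\infty$, so Lemma~\ref{fundamental} (applied with $T\to sT$, $U=(sT/R)^{1/2}$) gives $y_{sT}\ll RT^{-2}+(R\kappa_U(\alpha)^2)^{-1}\to0$; by Theorem~\ref{dani-continuous} the measure $\mu^*$ is then either $\mu_G$, or the algebraic measure on a closed horocycle of period $y_{sT}^{-1}\to\infty$, and in the latter case the equidistribution rate of long closed horocycles (remark after Theorem~\ref{implicit-dani-continuous}) gives $\int_X f\,d\mu^*=\int_X f\,d\mu_G+O(y_{sT}^{1/2+o(1)})\|f\|_{W^{O(1)}}=\int_X f\,d\mu_G+o_T(1)\|f\|_{\mathrm{Lip}}$. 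Hence $A=T\int_X f\,d\mu_G+o_T(T)\|f\|_{\mathrm{Lip}}$. Feeding this into Lemma~\ref{sieve-selberg}, bounding the primes $p\le\sqrt T$ trivially by $\pi(\sqrt T)\|f\|_{\mathrm{Lip}}=o(T/\log T)\|f\|_{\mathrm{Lip}}$, and using $\sum_{d<D}\tau_3(d)/d\ll(\log D)^3$,
\[
\sum_{p<T}a_p\le\frac{T\int_X f\,d\mu_G}{\log\sqrt D}+o_T(1)\,\frac{T}{\log T}\,\|f\|_{\mathrm{Lip}}+O\big(\delta T(\log T)^3\|f\|_{\mathrm{Lip}}\big).
\]
Dividing by $\pi(T)=(1+o(1))T/\log T$, using $\log\sqrt D=(\tfrac1{10}+o(1))\log T$ and $\int_X f\,d\mu_G\le\|f\|_{\mathrm{Lip}}$, and choosing $C>4$ so that $\delta(\log T)^4\to0$, one arrives at $\E_{p<T}f(\xi h(s)^p)\le 10\int_X f\,d\mu_G+o_T(1)\|f\|_{\mathrm{Lip}}$.

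\textbf{The main obstacle} is the proviso in the second paragraph: because of the weight $\tau_3(d)$ the sieve remainder is very unforgiving, so one must control those $d<D$ for which the Diophantine alternative of Lemma~\ref{period-made-explicit-discrete} holds and the step-$sd$ orbit equidistributes with respect to a \emph{discrete} (or short closed horocycle) algebraic measure, which is neither common across $d$ nor close to $\mu_G$. One should show that such a $d$ forces $sd/R$ to be very near a rational of bounded denominator and an $SL_2(\Z)$-translate of $\alpha$ to be abnormally well approximable at a scale shrinking with $T$; a counting argument then bounds this exceptional set, and the irrationality of $\alpha$ together with Lemma~\ref{fundamental} / Proposition~\ref{near-to-closed-horo} should force the associated closed horocycle to have period $\gg(\log T)^{O(1)}$, hence $o_T(1)$-close to $\mu_G$, so that the corresponding $r_d$ remains $O\big(\tfrac Td(\log T)^{-O(1)}\|f\|_{\mathrm{Lip}}\big)$. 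A related delicate point is to pick one $\delta=\delta(T)$ that keeps the sieve remainder and the $\mu^*\to\mu_G$ discrepancy simultaneously $o_T(1)$ while respecting the implicit exponents of Theorem~\ref{dani-discrete}; the regime in which $y_{sT}^{-1}$ grows only slowly (very well approximable $\alpha$) is the one to watch, and may be cleanest to treat by Vinogradov's method directly on the nearby closed horocycle rather than through the sieve.
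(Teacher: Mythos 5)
Your skeleton matches the paper's: run Selberg's sieve on $a_n=f(\xi h(s)^n)$ at level $D\approx T^{1/5}$, note that the divisor sums $\sum_{m\le T/d}f(\xi h(sd)^m)$ all trace out the same continuous piece $\xi h(t)$, $t\in[0,sT]$, appeal to Theorem~\ref{dani-discrete} to approximate each of them by the same algebraic measure, and use Lemma~\ref{fundamental} to force $y_{sT}\to 0$ since $\alpha$ is irrational. This is exactly the paper's strategy, and your observation that the sieve constant is $2/(\alpha)=10$ with $\alpha=1/5$, so the bottleneck is the exponent in Theorem~\ref{dani-discrete}, is correct.

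However, the two items you flag in your last paragraph are not peripheral polishing steps; they are the technical heart of the paper's proof, and your proposal as written does not resolve either.

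First, the case in which the Diophantine alternative of Lemma~\ref{period-made-explicit-discrete} already holds for $d=1$ (i.e.\ for $(\xi,s,N=T)$ with $\delta=(\log T)^{-A}$) is treated by the paper \emph{before} the sieve is ever invoked, and by a different tool: one extracts the smallest good modulus $q<\delta^{-O(1)}$, observes it must tend to infinity with $T$ because $\alpha$ is irrational and $\xi$ is fixed, uses Lemma~\ref{period-made-explicit-discrete}(ii) to approximate $\xi h(s)^n$ by a $q_2$-periodic sequence with $q_2\ll q^3$, then applies the Siegel--Walfisz theorem to replace the sum over primes by a sum over $n$ with $(n,q_2)=1$, and finally M\"obius inversion together with Theorem~\ref{dani-discrete} (applied with $\delta_*=q^{-\epsilon}$ and progressions of modulus $d<q^{\epsilon}$) to detect the coprimality condition. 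Your suggestion to "treat by Vinogradov's method directly on the nearby closed horocycle" is pointing in the right direction, but the argument you would need is precisely this Siegel--Walfisz reduction, which must be carried out, not alluded to.

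Second and more seriously, the sieve remainder $\sum_{d<D}\tau_3(d)|r_d|$ is controlled only after you prove that in each dyadic block $D<d<2D$ the number of \emph{exceptional} $d$'s --- those for which the Diophantine alternative of Lemma~\ref{period-made-explicit-discrete} holds with $s\mapsto sd$, $N\mapsto N/d$ --- is $O(\delta D)$, so that they may be absorbed by a trivial bound $|r_d|\ll T/d$. The paper proves this by contradiction through a careful pigeonholing: one first groups the exceptional $d$'s by comparable values of $\tau(\widetilde{(q_d)}_2)$, then by common $(\gamma,q)$, applies Green--Tao's effective equidistribution on the torus (their Lemma~3.2) to the resulting set of $d$'s to extract a single modulus $h<M$ with $\|hq\,s/R\|<M T^{-1}$, shows $h\mid d$ for all $d$ in the set, applies the torus lemma again to the quotients $\lambda=d/h$, and finally derives that the conditions in Lemma~\ref{period-made-explicit-discrete} were already satisfied for $d=1$, contradicting the case split. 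None of this is a mere "counting argument" that can be left to the reader: the self-improving bound on $\tau_*$ at the end (from $k\ll q_*<\tau_*^{O(1)}\delta^{-O(1)}<\tau(k)^{O(1)}\delta^{-O(1)}$ to $\tau_*<\delta^{-O(1)}$) is a nontrivial step needed to close the loop. Until this is supplied, the sieve step in your proposal is incomplete, since without it one cannot even assert that $r_d$ is small for most $d$.
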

\begin{proof}
Let $\|f\|_{\Lip}=1$. Let us begin by dealing with the case in which the conditions in Lemma \ref{period-made-explicit-discrete} (i) are satisfied for $\xi$, $s$, $N=T$ and $\delta=(\log T)^{-A}$, $A$ a large constant. Take $q<\delta^{-O(1)}$ the smallest integer satisfying the conditions in the lemma. It is easy to check that $q$ has to go to infinity with $T$;
moreover by Lemma \ref{period-made-explicit-discrete} (ii) and the remarks after the lemma one sees that $\xi h(s)^n$ is at distance $\delta$ from a $q_2$-periodic sequence, with $q_2\mid q[q\frac sR]^2 \ll q^3<\delta^{-O(1)}$, when $n$ is restricted to any subinterval $J$ such that $|J|=\delta^{c}T$ for some constant $c$. In this situation we can apply the Siegel-Walfisz theorem \cite[page 133]{da} for primes in arithmetic progressions  to deduce that
\[
\E_{p<T} f(\xi h(s)^p)=\E_{n<T, (n,q_2)=1} f(\xi h(s)^n) +O((\log T)^{-1}).
\]
The conditions of Lemma \ref{period-made-explicit-discrete} are not satisfied for the parameters $s_*=sd$, $N_*=N/d$, $\delta_*=q^{-\epsilon}$, if $d<q^{\epsilon}$ and $\epsilon>0$ is sufficiently small; we can then apply Theorem \ref{dani-discrete} to deduce that 
\begin{equation}\label{d-independence}
\E_{n<T, n\equiv 0 \text{ mod } d} f(\xi h(s)^n)=\int f \, d\mu +O(q^{-\epsilon}),
\end{equation}
for any $d<q^{\epsilon}$, where $d\mu$ is the average of the algebraic measures appearing in the statement and then it is independent of $d$. Therefore, using the identity $1_{m=1}=\sum_{d\mid m}\mu (d)$ and \eqref{d-independence} for $d<q^{\epsilon}$ we have
\[
\E_{n<T, (n,q_2)=1} f(\xi h(s)^n) = \E_{n<T} f(\xi h(s)^n)+ O(\tau(q_2)q^{-\epsilon})
\]
and then the result follows from Corollary \ref{dani}.

Now let us suppose that the conditions in Lemma \ref{period-made-explicit-discrete} are not satisfied. Then, applying Theorem \ref{dani-discrete} we have
\[
\E_{n<T} f(\xi h(s)^n)=\int f \, d\mu +O((\log T)^{-A}).
\]
with $\mu$ the average of the algebraic measures there. Let us suppose that for any $D\le D_0 =T^{1/5}$ and for any $d$ in $D<d<2D$ but at most $O(\delta D)$ exceptions, the conditions in Lemma \ref{period-made-explicit-discrete} are not satisfied for the parameters $s_*=sd$ and $N_*=N/d$. For any of the non-exceptional $d$  we have  (again by Theorem \ref{dani-discrete})
\[
\E_{n<T, n\equiv 0 \text{ mod } d} f(\xi h(s)^n)=\int f \, d\mu +O((\log T)^{-A}).
\]
Therefore, applying Lemma \ref{sieve-selberg} we get
\[
\E_{p<T} f(\xi h(s)^p)\le 10 \,  \E_{n<T} f(\xi h(s)^n) +O((\log T)^{O(1)-A})
\]
and we are done by Corollary \ref{dani}. So, we have finished unless 
\[
\|d q_d \frac{s}{R_d}\|=D\tau_d^{O(1)}\delta^{-O(1)}T^{-1},  \hspace{16pt}
\|\left[d q_d \frac s{R_d} \right]\alpha_d\|=D\tau_d^{O(1)}\delta^{-O(1)}(sT^2)^{-1}
\] 
for more than $\delta D$ $d$'s in $D<d<2D$ for some $D$, where $R_d=R(\gamma_d g_0)$, $\alpha_d=\alpha(\gamma_d g_0)$, $\Gamma g_0=\xi$, $q_d\ll \tau_d^{O(1)}\delta^{-O(1)}$, and $\tau_d=\tau(\widetilde{(q_d)}_2)$. Let us see that this cannot be true for any $D<T^{1-\epsilon}$, $\epsilon>0$.

Let us assume it is true. We know that $\tau_d< L=D^{o(1)}$. Then we can split $[1,L]$ into at most $O(\log\log L)$  intervals of the shape $[t,2t^2]$, and there will be at least $\delta D (\log\log L)^{-1}\gg \delta^{2} D$ $d$'s with $\tau_d$ in one of them; let us consider now just those $d$'s; for any of them we have $\tau_*\le \tau_d \le \tau_*^2$ for some fixed $\tau_*<L$. 

This implies that $\gamma_d=\gamma$ and $q_d=q$ for a set $\mathcal A$ of more than $D/M$ $d$'s, with $M=\tau_*^{O(1)}\delta^{-O(1)}$. So for them
\[
\|d q \frac{s}{R}\|=DMT^{-1} \, \hspace{30pt}
\|\left[d q \frac s{R} \right]\alpha\|=DM(sT^2)^{-1},
\] 
and then effective equidistribution in the torus (see Lemma 3.2 in \cite{green-tao-equidistribution}) implies that
\begin{equation}\label{torus-equid-sR}
\|hq\frac sR\|<MT^{-1}
\end{equation}
for some $h<M$. We can assume $h$ is coprime to $[hqs/R]$. Now, since $M^{-2}<DMT^{-1}$ it is easy to check that $h\mid d$ for any $d\in \mathcal A$. But then $d=\lambda h$, we have $[d q  s/R]=\lambda[hqs/R]$ and
\[
\|\lambda[hqs/R]\alpha\|<DM(sT^2)^{-1}
\]
for more than $D/M$ $\lambda$'s with $\lambda\ll D $, which again by effective equidistribution in the torus gives
\[
\|j[hqs/R] \alpha\|<M(sT^2)^{-1}
\]
for some $j<M$. Now $j[hqs/R]=[jhqs/R]$ by \eqref{torus-equid-sR}, so choosing $q_*=jq<M$ we have
\begin{equation}\label{TauContradic}
\|q_*\frac{s}{R}\|=MT^{-1} \, \hspace{30pt}
\|\left[q_* \frac sR \right]\alpha\|=M(sT^2)^{-1}.    
\end{equation}
Now, it is easy to check that for any $d\in \mathcal A$ we have
\[
\frac{[[dq\frac sR]\alpha]}{[dq\frac sR]}=\frac{[[q_* \frac sR]\alpha]}{[q_*\frac sR]},
\]
which implies that $\widetilde{(q_d)}_2=k\le [q_*\frac sR] \ll q_*$.  Since $\tau_*<\tau(k)<\tau_*^2$ we arrive at
\[
k\ll q_*<\tau_*^{O(1)}\delta^{-O(1)}<\tau(k)^{O(1)} \delta^{-O(1)}
\]
so $k<\delta^{-O(1)}$ and then $\tau_*<\delta^{-O(1)}$. But then \eqref{TauContradic} means that the conditions in Lemma \ref{period-made-explicit-discrete} are satisfied for the original sequence, which is a contradiction. 
\end{proof}
\emph{Remark:} Theorem \ref{dani-discrete} was not strictly necessary to prove this result (and then Theorem \ref{measure-control}); one could proceed in a more direct way, taking advantage of the extra average in $d$ in Lemma \ref{sieve-selberg}. Anyway, it seems difficult to get a much better level in Lemma \ref{sieve-selberg} that way. We did not do it that way because we think Theorem \ref{dani-discrete} is interesting by itself.

Finally, let us see that Theorem \ref{measure-control} follows from Theorem \ref{prime-sum}. Let $x\in X$ generic, and $\nu_x$ an accumulation point for the sequence $(\pi_{x,N})_N$ in $C^*(X^*)$, where $X^*$ is the one-point compactification of $X$. Take $f\in C(X^*)$, $f\ge 0$. By approximation, we can assume that $f$ as finite Lipschitz norm in $X$, so that Theorem \ref{prime-sum} gives 
\[
\int_X f \, d\nu_x \le 10 \int_X f \, d\mu_G,
\]
and we are done.

\section{Density of the Hecke orbit}\label{density-hecke-orbit}

In this section we are going to prove a stronger result (Theorem \ref{hecke-control}) for the special orbit $H_N h(p)$, $p\le N$, from which we can deduce in particular that it becomes dense in $X$ when $N\to \infty$. This will be possible because we can get a good level of distribution for linear sums, and above all because we can handle bilinear sums up to a considerable level. We input those bounds into the following special case of a sieve result from \cite{duke-friedlander-iwaniec}

\begin{lemma}[Asymptotic sieve]\label{duke-friedlander-iwaniec}
Let $\{a_n\}_{n\in \N}$ be a sequence of non-negative numbers such that $a_n\ll \tau(n)$ and $a_n=A+c_n$ for some constant $A\ge 0$ and a sequence $c_n$ satisfying the ``Type I condition of level $\alpha$''
\begin{equation}\label{TypeI}
\E_{D<d<2D} |\E_{n\le x/d}  \, c_{dn} |\ll (\log x)^{-3}
\end{equation}
for any $D<x^{\alpha-\epsilon}$ and also the ``Type II condition of level $\gamma$''
\begin{equation}\label{TypeII}
\E_{D< d_1 <d_2 <2D} | \E_{n\le \min(x/d_1,x/d_2)} \, c_{d_1 n} \, \overline{c_{d_2 n}}|\ll  (\log x)^{-22},
\end{equation}
for any $D$ with $x^{(\log\log x)^{-3}}\le D \le x^{\gamma-\epsilon} $, for any fixed $\epsilon>0$.
Then we have
\begin{equation}
|\E_{p<x} a_p - A |\le c(\alpha,\gamma)A + O(\epsilon),
\end{equation}
with $c(\alpha,\gamma)$ an explicit decreasing function, such that $c(1/2,1/3)=0$, $c(1/2,1/5)<4/5$ and $c(1/2,\gamma)=1$ for some $\gamma \in (1/6,1/5)$.

Moreover, in the summations we can assume that $d$ is square free and $d_1,d_2$ are primes.
\end{lemma}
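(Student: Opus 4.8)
The plan is to obtain this as the ``sieve dimension one'' specialization of the asymptotic sieve for primes of Duke--Friedlander--Iwaniec \cite{duke-friedlander-iwaniec}, so the work is to match the hypotheses to that machine and to recall how the constant $c(\alpha,\gamma)$ is produced. The first observation is that $a_n=A+c_n$ with $A$ a \emph{constant} forces the expected local densities to be exactly $1/d$: writing $A_d(x)=\sum_{n\le x,\, d\mid n}a_n=\sum_{m\le x/d}a_{dm}$, the main term is $A\,(x/d)$ and the Type I condition \eqref{TypeI} says that on average over $d<x^{\alpha-\epsilon}$ the remainder $\sum_{m\le x/d}c_{dm}$ is admissibly small. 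Thus $(a_n)$ is a one-dimensional (linear) sieve sequence with level of distribution $\alpha$, and $a_n\ll\tau(n)$ supplies the needed pointwise control. I would also record at this stage that the sieve weights one uses are supported on squarefree moduli and that the bilinear decomposition only ever splits a modulus into two prime factors, which is exactly the ``moreover'' allowing $d$ squarefree and $d_1,d_2$ prime in \eqref{TypeI}--\eqref{TypeII}.

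Next I would run the Buchstab iteration that underlies \cite{duke-friedlander-iwaniec}. Writing $S(\mathcal A,z)=\sum_{\substack{n\le x\\ p\mid n\Rightarrow p\ge z}}a_n$, one has $\E_{p<x}a_p=S(\mathcal A,\sqrt x)/\pi(x)+O((\log x)^{-1})$, and iterating Buchstab's identity expresses $S(\mathcal A,\sqrt x)$ as an alternating combination of sifting sums $S(\mathcal A_d,\,\cdot\,)$ with $d$ a product of primes below $\sqrt x$, together with $O(\log^{O(1)}x)$ dyadic pieces. With the splitting parameters tuned to $(\alpha,\gamma)$, each piece falls into one of three regimes: (a) the linear modulus $d$ is $\le x^{\alpha-\epsilon}$, so \eqref{TypeI} replaces $a_{dn}$ by its main term $A$ and one is left with a classical one-dimensional sieve sum to evaluate; (b) the piece reorganizes as a genuine bilinear form $\sum c_{d_1n}\overline{c_{d_2n}}$ with both $d_1,d_2$ in the window $[x^{(\log\log x)^{-3}},x^{\gamma-\epsilon}]$, hence negligible by \eqref{TypeII}; or (c) neither applies, in which case the piece is simply bounded above by a dimension-one upper-bound sieve (Selberg/Brun), again invoking only \eqref{TypeI}. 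The total overshoot of the regime-(c) pieces over the main term $A$ is a purely combinatorial quantity depending on $(\alpha,\gamma)$ --- essentially the logarithmic length of the region the iteration fails to cover, weighed by the one-dimensional Buchstab function --- and this is the explicit decreasing function $c(\alpha,\gamma)$; the $(\log x)^{-3}$ and $(\log x)^{-22}$ savings in the hypotheses are there precisely to absorb the $O(\log^{O(1)}x)$ many ranges.

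It then remains to read off the three numerical facts, which are the corresponding computations in \cite{duke-friedlander-iwaniec} with the linear Buchstab function. When $(\alpha,\gamma)=(1/2,1/3)$ the decomposition can be arranged (Vaughan/Heath--Brown splitting at scale $x^{1/3}$, the bilinear variables then lying symmetrically about $x^{1/2}$) so that regime (c) is empty; hence $c(1/2,1/3)=0$ and one gets the asymptotic $\E_{p<x}a_p\to A$. For $(\alpha,\gamma)=(1/2,1/5)$ a single Buchstab slice is uncovered and its contribution is controlled by the classical linear-sieve bound, giving $c(1/2,1/5)<4/5$. Finally there is a threshold $\gamma_\ast\in(1/6,1/5)$ below which the iteration recovers nothing beyond the trivial upper-bound-sieve value $c=1$, which accounts for $c(1/2,\gamma)=1$ for some $\gamma\in(1/6,1/5)$.

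The genuinely substantial point --- the content that lives in \cite{duke-friedlander-iwaniec} rather than in bookkeeping --- is organizing the combinatorial decomposition so that every piece is either of Type I at level $\le\alpha$ or a bilinear form with \emph{both} variables trapped in the comparatively narrow Type II window $[x^{(\log\log x)^{-3}},x^{\gamma-\epsilon}]$, and then extracting the \emph{sharp} constant from the leftover rather than a wasteful one. Everything else --- smooth cutoffs, the passage from $S(\mathcal A,\sqrt x)$ to $\E_{p<x}a_p$, the reduction to squarefree and prime moduli, and tracking the logarithmic savings through the dyadic ranges --- is routine. Since \eqref{TypeI} and \eqref{TypeII} are exactly the inputs that machine requires, I would simply cite the relevant theorem of \cite{duke-friedlander-iwaniec} once these checks are in place.
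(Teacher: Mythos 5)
Your plan is essentially the paper's: Sarnak--Ubis present this lemma with no proof of their own, explicitly as "the following special case of a sieve result from \cite{duke-friedlander-iwaniec}," and your proposal correctly identifies that the work is to match the hypotheses (the constant local density $1/d$ coming from $a_n=A+c_n$, the $\tau(n)$ pointwise bound, the Type I/II windows) to the Duke--Friedlander--Iwaniec machine and then cite its main lemma, reading off the numerical values of $c(\alpha,\gamma)$ from their Buchstab/combinatorial-decomposition analysis.
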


Now, we are going to check the Type I condition.

\begin{proposition}[Bound for Type I sums]\label{hecke-type-i}
Let $f$ with $\int_X f=0$. We have that
\[
\E_{D<d<2D}|\E_{n\le N/d} \, f(H_N h(dn))| \ll (\log N)^{-3}\|f\|_{\mathrm{Lip}}
\]
for any $D<N^{1/2-\theta-\epsilon}$, for any  $\epsilon>0$.
\end{proposition}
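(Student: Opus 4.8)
\emph{Proof strategy.} Since $a(y)=\mathrm{diag}(y^{1/2},y^{-1/2})$ we have $H_N=a(1/N)$, and from $a(y)h(x)=h(xy)a(y)$ we get $H_Nh(dn)=h(dn/N)a(1/N)$; thus the points $H_Nh(dn)$, $n\le N/d$, are equally spaced points on the closed horocycle of length $N$. Put $F(u)=f(\Gamma h(u)a(1/N))$, a smooth $1$-periodic function of $u$ (because $h(1)\in SL_2(\Z)$). We must bound $\E_{D<d<2D}\,|\E_{n\le N/d}F(dn/N)|$. Expanding $F(u)=\sum_{k\in\Z}\widehat F(k)e(ku)$, Proposition \ref{period-bound-proposition} applied with $q=1$, $y=1/N$ (using $\theta<1/4$, so the minimum there is $y^{-\theta}=N^{\theta}$) gives, for any fixed $b$,
\[
|\widehat F(k)|\ll_{\epsilon,b}N^{\theta-\frac12+\epsilon}\,\tau(|k|)\,(1+|k|/N)^{-b}\,\|f\|_{W^{6+b}}\quad(k\neq0),\qquad |\widehat F(0)|\ll_\epsilon N^{-\frac12+\epsilon}\|f\|_{W^6},
\]
the second estimate using $\int_Xf\,d\mu_G=0$. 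In particular, taking $b=2$, $\sum_{k\neq0}|\widehat F(k)|\ll_\epsilon N^{\frac12+\theta+\epsilon}\|f\|_{W^{8}}$. By \eqref{lipschitz-controls-sobolev} it suffices to prove the Proposition with $\|f\|_{\mathrm{Lip}}$ replaced by $\|f\|_{W^{8}}$ and then, at the end, take $f_{\delta_0}$ with $\delta_0=(\log N)^{-C}$: the errors from replacing $f$ by $f_{\delta_0}$ are $\ll\delta_0$ in the averages and in $\int(f-f_{\delta_0})d\mu_G$, while $\|f_{\delta_0}\|_{W^{8}}\ll\delta_0^{-8}$, so any bound of the shape $N^{-\epsilon}(\log N)^{O(1)}\|f\|_{W^8}$ for $f_{\delta_0}$ yields the claim once $N$ is large.

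Write $\E_{n\le N/d}F(dn/N)=\widehat F(0)+\sum_{k\neq0}\widehat F(k)\,\E_{n\le N/d}e(kdn/N)$; the term $\widehat F(0)$ is already admissible. For each $d$ split the $k$-sum according to whether $N\mid kd$. If $N\mid kd$ then $\E_{n\le N/d}e(kdn/N)=1$; these $k$ are the nonzero multiples of $N/\gcd(N,d)$, and by the Fourier decay above (there are $\le2\gcd(N,d)$ of them with $|k|\le N$, and the rest decay) one gets $\big|\sum_{N\mid kd,\,k\neq0}\widehat F(k)\big|\ll\gcd(N,d)\,N^{\theta-\frac12+\epsilon}\|f\|_{W^8}$; since $\gcd(N,d)\le d<2D$ this is $\ll D\,N^{\theta-\frac12+\epsilon}\|f\|_{W^8}$ for every $d$. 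If $N\nmid kd$ then $\|kd/N\|\ge1/N$; writing $\|kd/N\|=\|kd\bmod N\|_N/N$ with $\|r\|_N:=\min(r,N-r)$ and using $M:=\lfloor N/d\rfloor\asymp N/D$, one has $|\E_{n\le N/d}e(kdn/N)|\le\min(1,\,2D/\|kd\bmod N\|_N)$, so the remaining part is
\[
\ll\sum_{k\neq0}|\widehat F(k)|\,\min\!\Big(1,\tfrac{2D}{\|kd\bmod N\|_N}\Big).
\]

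Now average this over $d\in(D,2D)$ by summing over pairs $(k,d)$. A divisor–counting argument gives, for $X\ge1$, $K_0\ge1$,
\[
\#\big\{(k,d):\ 1\le|k|\le K_0,\ D<d<2D,\ 1\le\|kd\bmod N\|_N\le X\big\}\ \ll\ XD\big(1+K_0/N\big),
\]
obtained by fixing $d$, noting $kd\equiv r\bmod N$ forces $\gcd(d,N)\mid r$ and then $k$ into a single progression mod $N/\gcd(d,N)$, which yields $\le 4XK_0/N+2X/\gcd(d,N)$ values of $k$, and summing over $D<d<2D$ using $\sum_{d}1/\gcd(d,N)\le D$. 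Decomposing dyadically in $\|kd\bmod N\|_N\asymp2^{j}D$ (so $\min(1,2D/\|\cdot\|_N)\asymp2^{-j}$) and in $|k|\asymp2^{i}N$ ($i\ge0$), and inserting $|\widehat F(k)|\ll N^{\theta-\frac12+\epsilon}\|f\|_{W^8}$ for $|k|\le N$ and $|\widehat F(k)|\ll N^{\theta-\frac12+\epsilon}2^{-ib}\|f\|_{W^8}$ for $|k|\asymp2^{i}N$, the sum over all pairs is $\ll N^{\theta-\frac12+\epsilon}D^{2}(\log N)\|f\|_{W^8}$ (the $i$- and $j$-series converge, resp.\ have $O(\log N)$ terms, thanks to $b\ge2$). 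Dividing by $|(D,2D)\cap\Z|\asymp D$ gives $\E_{D<d<2D}[\text{off-diagonal part}]\ll D\,N^{\theta-\frac12+\epsilon}(\log N)\|f\|_{W^8}$.

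Collecting the three contributions,
\[
\E_{D<d<2D}\big|\E_{n\le N/d}f(H_Nh(dn))\big|\ \ll\ \big(N^{-\frac12}+D\,N^{\theta-\frac12}\big)N^{\epsilon}(\log N)\,\|f\|_{W^8},
\]
which for $D<N^{1/2-\theta-\epsilon}$ (choosing the internal parameter small relative to this $\epsilon$) is $\ll N^{-\epsilon/2}(\log N)\|f\|_{W^8}$; feeding in $f_{\delta_0}$ with $\delta_0=(\log N)^{-4}$ as above gives $\ll(\log N)^{-3}\|f\|_{\mathrm{Lip}}$ for $N$ large, the bounded range of $N$ being trivial. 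The main obstacle is the pair-count displayed above, together with the bookkeeping turning near-resonances (small $\|kd\bmod N\|_N$) into a genuine saving; it is precisely here that $D<N^{1/2-\theta-\epsilon}$ is needed, since the case $X=D$ of the count produces $\asymp D^{2}$ near-resonant pairs, each contributing $\asymp N^{\theta-\frac12+\epsilon}$ after the $d$-average, and $D^{2}\cdot N^{\theta-\frac12}/D=D\,N^{\theta-\frac12}$ must be beaten.
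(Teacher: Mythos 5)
Your proof is correct and follows essentially the same route as the paper: expand $f$ in its Fourier series on the closed horocycle of period $N$ via the Iwasawa identity $H_N h(dn)=h(dn/N)a(1/N)$, control the Fourier coefficients with Proposition~\ref{period-bound-proposition} (with $q=1$, $y=1/N$), and then show the contribution comes from near-resonances $kd\equiv r\pmod N$ with small $r$, whose count is $\ll D^2$. The only presentational difference is that the paper inserts a smooth cutoff $\eta(Dn/N)$ and applies Poisson summation (rewriting the pair-sum over $(d,m)$ as a sum over $j=dm$ weighted by $\tau(j)^2$ and $\hat\eta(\tfrac ND\{\tfrac jN\})$), whereas you keep the sharp cutoff and bound the geometric series directly by $\min(1,2D/\|kd\bmod N\|_N)$ followed by an explicit congruence-based pair count; both arguments count the same near-resonant set and yield the same threshold $D<N^{1/2-\theta-\epsilon}$.
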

\begin{proof}
Let us assume $\|f\|_{\Lip}=1$, and suppose the result is false. Then, we have
\[
\E_{D <d <2D} |\E_{n} f(\Gamma H_N h(dn))\eta(\frac{Dn}{N})|\gg \delta
\]
for $\delta=(\log N)^{-A}$, $A$ a large constant, with $\eta \in C_0^{\infty}(-2,2)$ with $\|\eta^{(j)}\|_{L^{\infty}}, \|f\|_{W^j}\ll_j \delta^{-j}$. We have the Iwasawa parametrization
\[
H_N h(dn)=h(dnN^{-1})a(N^{-1}),
\]
and then by the Fourier expansion $f(h(x)a(y))=\sum_m \tilde f (m, y) e(mx)$ and Poisson Summation we have
\[
\E_{D<d<2D}\sum_m |\tilde f(m,N^{-1})| |\hat\eta(\frac ND \{\frac{dm}N \})| \gg \delta
\]
so that from Proposition \ref{period-bound-proposition} we have
\[
\delta^{-O(1)} D^{-1}N^{-1/2+\theta_2} \sum_{j\ll \delta^{-1} DN} \tau(j)^2|\hat \eta(\frac ND \{\frac jN \} )|\gg 1
\]
which taking into account the decay of $\hat\eta$ gives a contradiction.
\end{proof}

Let us go with the Type II condition

\begin{proposition}[Bound for Type II sums]\label{hecke-type-ii}
Let $f_1, f_2$ be continuous functions in $\Gamma\backslash G$ with $\|f_1\|_{\Lip}=\|f_2\|_{\Lip}=1$ and $\int_X f_1=0$ . Let $N^{(\log\log N)^{-3}}<D<N^{(1-2\theta)/(5+2\theta)-\epsilon}$ and $D<d_1 < d_2 <2D$, with $d_1, d_2$ primes. Then we have
\[
\mathbb E_{n\le \min(N/d_1,N/d_2)} f_1(H_N h(d_1 n)) f_2(H_N h(d_2 n)) \ll (\log N)^{-22}.
\]
\end{proposition}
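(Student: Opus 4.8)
The plan is to expand both $f_1$ and $f_2$ in Fourier series at the cusp using the Iwasawa parametrization $H_N h(d_i n) = h(d_i n N^{-1}) a(N^{-1})$, and thereby convert the bilinear sum into a sum of shifted convolutions of Hecke eigenvalues. Writing $f_i(h(x)a(y)) = \sum_m \tilde f_i(m,y) e(mx)$ and using Proposition \ref{period-bound-proposition} to control each coefficient $\tilde f_i(m, N^{-1})$, the inner sum over $n$ becomes (up to harmless smooth cutoffs $\eta$ and the replacement of $f_i$ by $f_{i,\delta}$ with $\delta = (\log N)^{-A}$)
\[
\sum_{m_1, m_2} \tilde f_1(m_1, N^{-1}) \overline{\tilde f_2(m_2,N^{-1})} \, \E_{n \le N/d_2} e\!\left(\tfrac{(m_1 d_1 - m_2 d_2) n}{N}\right) \eta\!\left(\tfrac{d_2 n}{N}\right).
\]
By Poisson summation in $n$ the $n$-average is negligible unless $m_1 d_1 - m_2 d_2$ is small modulo $N$ — more precisely unless $m_1 d_1 \equiv m_2 d_2$ up to $O(N^{1+o(1)}/(N/d_2)) = O(d_2 N^{o(1)})$. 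Since $d_1, d_2$ are distinct primes in $(D, 2D)$ and the $m_i$ are confined to $|m_i| \ll \delta^{-O(1)} N$ by the rapid decay of $\tilde f_i$ (from integration by parts, i.e. the $1+|my|^b$ denominator in Proposition \ref{period-bound-proposition}), this forces $m_1 = d_2 \ell$, $m_2 = d_1 \ell$ for an integer $\ell$ with $|\ell| \ll N^{1+o(1)}/D$, plus a negligible off-diagonal remainder. Then one must bound
\[
\sum_{|\ell| \ll N^{1+o(1)}/D} \tilde f_1(d_2 \ell, N^{-1})\, \overline{\tilde f_2(d_1 \ell, N^{-1})}.
\]

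Next I would insert the spectral decomposition \eqref{spectral-decom-func} for each $f_i$, reducing to $\pi_i$ with $\lambda_{\pi_i} \ll \delta^{-O(1)}$ and $f_i \in V_{\pi_i}$, and use \eqref{fourier-coeffcient-whittaker}, \eqref{multiplicativity} to write $\tilde f_i(m, N^{-1})$ in terms of $\lambda_{\pi_i}(m) |m|^{-1/2} W_{f_i}(|m| N^{-1})$. The Whittaker factor $W_{f_i}(|m|N^{-1})$ localizes $|m| \asymp N$, so effectively $|\ell| \asymp N/D$ with $\lambda_{\pi_i}(d_j \ell) \ll \tau(\ell) |d_j \ell|^\theta$ via \eqref{Peter}. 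Using multiplicativity to strip the prime $d_j$ (paying a factor $d_j^\theta \ll D^\theta$), the problem becomes bounding the shifted-convolution-type sum $\sum_\ell \lambda_{\pi_1}(\ell) \lambda_{\pi_2}(\ell) \Phi(\ell)$ for a smooth weight $\Phi$ of length $L = N/D$. Here I would either invoke the sharpest shifted convolution bounds reviewed in Section \ref{automorphic-forms} (from \cite{blomer-harcos, blomer-harcos-adeles}), or — since $\pi_1, \pi_2$ have bounded eigenvalue — use that such a sum over $\ell \le L$ of products of two Hecke eigenvalues, smoothed, exhibits square-root-type cancellation beyond the main term, which contributes only when $\pi_1 = \tilde\pi_2$; the main term itself is controlled by $\int_X f_1 = 0$. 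Combining the resulting bound $\ll D^\theta \cdot N^{-1/2+\theta+o(1)} \cdot (\text{cancellation in } \ell)$ against the normalization $\min(N/d_1, N/d_2)^{-1} \asymp D/N$ and optimizing the exponents yields the claimed range $D < N^{(1-2\theta)/(5+2\theta) - \epsilon}$.

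The main obstacle is the last step: extracting genuine cancellation from $\sum_{\ell \le L} \lambda_{\pi_1}(\ell)\lambda_{\pi_2}(\ell) \Phi(\ell)$ uniformly in the (bounded but nontrivial) spectral parameters of $\pi_1, \pi_2$, and — crucially — doing so with enough power-saving to overcome the $D^\theta$ loss from the primes $d_1, d_2$ and the $N^{\theta}$ loss from \eqref{Peter}. This is exactly the shifted convolution / Ratner-joining circle of problems flagged in the introduction; the quantitative input is precisely the unconditional bounds of Section \ref{automorphic-forms}, and the arithmetic of when $m_1 d_1 \equiv m_2 d_2$ can fail to be diagonal (the off-diagonal contribution, governed by the size of $m_1 d_1 - m_2 d_2 \neq 0$) must be shown to be absorbed — this uses that $d_1 \neq d_2$ are prime so that $d_1 d_2 \ge D^2$ exceeds the relevant modulus $N^{1+o(1)}/(N/d_2)$ only in the stated range, which is what pins down the exponent $(1-2\theta)/(5+2\theta)$.
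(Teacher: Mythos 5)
The setup is in the right direction --- Fourier expansion at the cusp, Poisson summation in $n$ to localize $d_1 m_1 - d_2 m_2$, recognition of a shifted-convolution structure --- but the proposal misidentifies where the difficulty lies and misses the key mechanism the paper uses.

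Your claim that the off-diagonal (i.e.\ $k = d_1 m_1 + d_2 m_2 \neq 0$) is ``a negligible off-diagonal remainder'' has it backwards. In the paper's argument the \emph{diagonal} $k=0$ case is the easy one: when $d_1 m_1 = -d_2 m_2$ with $d_1, d_2$ distinct primes one must have $m_1 = d_2 j$, $m_2 = -d_1 j$, and then multiplicativity strips off the primes at a cost of $(d_1 d_2)^{\theta-1/2}$, after which Cauchy--Schwarz and Parseval (\ref{parseval-discrete}) bound the $j$-sum by $O(1)$; no cancellation in $j$ is needed or used. Your alternative route --- ``square-root cancellation in $\sum_\ell \lambda_{\pi_1}(\ell)\lambda_{\pi_2}(\ell)\Phi(\ell)$'' --- is not only unnecessary but problematic, since when $\pi_1 = \tilde\pi_2$ there is a genuine main term in that sum (a Rankin--Selberg residue), not cancellation; the condition $\int_X f_1 = 0$ kills the $\phi_0$ component, not this Rankin--Selberg diagonal. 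The paper simply doesn't need cancellation here.

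The actual burden of the proof is the off-diagonal $k\neq 0$ case, and the crucial idea you are missing is how the level $d_1 d_2$ enters. The paper rewrites the bilinear period as
$\int f_*\bigl(h(x)a(\tfrac1{DN})\bigr) e(-kx)\,dx$ where $f_* = f_{d_1} f_{d_2}$ and $f_d(g) = f(a(d)\,g\,a(D/d))$, so that $f_d$ is $\Gamma_0(d)$-invariant and $f_*$ is a $\Gamma_0(d_1 d_2)$-invariant function with Sobolev norm $\ll (d_1 d_2)^{1/2+\epsilon}\delta^{-O(1)}$. Applying Proposition \ref{period-bound-proposition} at level $q = d_1 d_2 \asymp D^2$ and height $y = (DN)^{-1}$ gives $\tilde f_*(k, (DN)^{-1}) \ll \delta^{-O(1)} (DN)^{-1/2+\theta}(d_1 d_2)^{1/2+\epsilon}$, and comparing against the threshold $\delta D^{-2}$ is precisely what produces the exponent $(1-2\theta)/(5+2\theta)$. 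Your explanation of where the exponent comes from (``$d_1 d_2 \ge D^2$ exceeds the relevant modulus'') is not the right mechanism; $D^2$ always exceeds $D\cdot N^{o(1)}$, and the bound would not follow from such a comparison. Without the conjugation trick producing a $\Gamma_0(d_1 d_2)$-automorphic $f_*$ and the $q$-dependence in Proposition \ref{period-bound-proposition}, the argument does not close.
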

\begin{proof}
As in (4.1) we can replace the Lipschitz norm by Sobolev norms and hence if the result is false, we have
\[
|\mathbb E_{n< N/D} f_1(H_N h(d_1 n)) f_2(H_N h(d_2 n)) \eta(\frac{Dn}{N}) |\gg \delta
\]
for $\delta=(\log N)^{-A}$, $A$ a large constant, with $\eta \in C_0^{\infty}(0,1)$ with $\|\eta^{(j)}\|_{L^{\infty}}\ll\delta^{-j}$ and $\|Df_i\|_{L^{\infty}}\ll_{\mathrm{ord}D}\delta^{-\mathrm{ord} D}$. By the Fourier expansion of $f_i$, $f_i(h(x)a(y))=\sum_{m} \tilde f_i(m,y) e(mx)$, the bounds in Proposition \ref{period-bound-proposition},
Poisson summation in $n$ and integration by parts we have that either 
\begin{equation*} 
|\sum_{d_1 m_1+d_2 m_2=0} \tilde f_1(m_1,N^{-1}) \tilde f_2 (m_2,N^{-1})|\gg \delta
\end{equation*}
or 
\begin{equation}\label{second-possibility}
|\sum_{d_1 m_1+d_2 m_2=k} \tilde f_1(m_1, N^{-1}) \tilde f_2 (m_2,N^{-1})|\gg \delta D^{-2}
\end{equation}
for some $k\neq 0$, $k\ll \delta^{-O(1)} DN$. If the first possibility is true, we get
\[
|\sum_{j} \tilde f_1(d_1 j,N^{-1}) \tilde f_2(d_2 j,N^{-1})|\gg \delta.
\]
But from Proposition \ref{period-bound-proposition} we have
$
\tilde f_i (0,N^{-1})\ll \delta ^{-O(1)}N^{-\frac 12},
$
and from the spectral expansion (\ref{spectral-decom-func}) of $f_i$,
the multiplicativity of Hecke eigenvalues (\ref{multiplicativity}) and Parseval \eqref{parseval-discrete} we have
\begin{equation}\label{k-zero}
\sum_{j\neq 0} \tilde f_1(d_1 j,N^{-1}) \tilde f_2(d_2 j,N^{-1})\ll (d_1 d_2)^{\theta-\frac 12},
\end{equation}
which gives a contradiction.

Let us now assume that \eqref{second-possibility} is true. This is a shifted convolution sum, and we can proceed as in \cite{blomer-harcos,blomer-harcos-adeles}. We can translate it as
\[
|\int_0^1 f_1(h(d_1 x)a(\frac 1N))f_2(h(d_2 x)a(\frac 1N)) e(-kx) \, dx|\gg \delta D^{-2}, 
\]
and further as
\[
|\int f_*(h(x) a(\frac 1{DN})) e(-kx) \, dx | \gg \delta D^{-2}, 
\]
with $f_*=f_{d_1} f_{d_2}$ and
\[
f_{d}(g)= f( a(d)\,  g  \, a(D/d) ).
\]
$f_{d}$ is a $\Gamma_0(d)-$invariant function and since $D/d \asymp 1$ we have
$\|Df_{d}\|_{L^{\infty}}\ll \|Df\|_{L^{\infty}}$; thus, $f_*$ can be seen as a function in $\Gamma_0(d_1d_2)\backslash G$ with Sobolev norms 
$\|f_*\|_{W^j} \ll_j (d_1 d_2)^{\frac 12 +\epsilon}\delta ^{-j}$.
Therefore, from Proposition \ref{period-bound-proposition} we get the bound
\[
\tilde f_*(k, (DN)^{-1})\ll \delta^{-O(1)} (DN)^{-\frac 12+\theta}(d_1 d_2)^{\frac 12 +\epsilon}
\]
which gives a contradiction in our range for $D$.
\end{proof}

Now, assuming that $\theta=0$, due to Propositions \ref{hecke-type-i} and \ref{hecke-type-ii} we can apply Lemma \ref{duke-friedlander-iwaniec} with $\alpha=1/2$ and $\gamma=1/5$ for  $a_n=f(\Gamma H_N h(n))$, and then

\begin{theorem}\label{hecke-almost-equidistribution}
Assuming $\theta=0$, for any non-negative $f$ we have
\[
|\E_{p<N} f(H_N h(p))-\int_X f \, d\mu_G |\le \frac 45 \int_X f \, d\mu_G +o(1) \|f\|_{\Lip}
\]
\end{theorem}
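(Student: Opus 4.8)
The plan is to apply the Duke--Friedlander--Iwaniec asymptotic sieve, Lemma~\ref{duke-friedlander-iwaniec}, to the sequence $a_n = f(\Gamma H_N h(n))$, with the parameter $x$ there taken to be $N$, verifying the Type~I and Type~II hypotheses directly from Propositions~\ref{hecke-type-i} and~\ref{hecke-type-ii}. First I would reduce to the case $\|f\|_{\Lip} = 1$: if $f$ is constant the assertion is trivial, and otherwise one divides by $\|f\|_{\Lip}$; since both sides scale linearly in $f$, the factor $\|f\|_{\Lip}$ and the term $\frac 45\int_X f\, d\mu_G$ reappear by homogeneity. Then set $A = \int_X f\, d\mu_G \ge 0$ (nonnegative because $f\ge 0$) and $c_n = f_0(\Gamma H_N h(n))$ with $f_0 = f - A$, so that $a_n = A + c_n$, the $a_n$ are nonnegative, and $a_n \ll 1 \ll \tau(n)$. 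Crucially $\int_X f_0\, d\mu_G = 0$, which is exactly the normalization assumed in both propositions, and $\|f_0\|_{\Lip}\le 2$.

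To check the Type~I condition~\eqref{TypeI} at level $\alpha = 1/2$, I would put $\theta = 0$ in Proposition~\ref{hecke-type-i}, which gives $\E_{D<d<2D}|\E_{n\le N/d} f_0(H_N h(dn))| \ll (\log N)^{-3}\|f_0\|_{\Lip} \ll (\log N)^{-3}$ for every $D < N^{1/2-\epsilon}$; this is precisely~\eqref{TypeI} with $x = N$. For the Type~II condition~\eqref{TypeII} at level $\gamma = 1/5$, I would take $f_1 = f_2 = f_0$ in Proposition~\ref{hecke-type-ii} — legitimate since that proposition only requires $\int_X f_1 = 0$, and since $f_0$ is real-valued one has $\overline{c_{d_2 n}} = c_{d_2 n}$ — and again set $\theta = 0$, obtaining $\E_{n\le\min(N/d_1,N/d_2)} c_{d_1 n}\, c_{d_2 n} \ll (\log N)^{-22}$ for primes $D < d_1 < d_2 < 2D$ with $N^{(\log\log N)^{-3}} < D < N^{1/5-\epsilon}$; averaging over such pairs a fortiori yields~\eqref{TypeII}. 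The ``moreover'' clause of Lemma~\ref{duke-friedlander-iwaniec} lets us restrict $d$ to square-free values and $d_1, d_2$ to primes, so these are exactly the sums that need bounding.

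With the hypotheses of Lemma~\ref{duke-friedlander-iwaniec} in hand for $(\alpha,\gamma) = (1/2,1/5)$, it returns $|\E_{p<N} f(H_N h(p)) - A| \le c(1/2,1/5)\, A + O(\epsilon)$, and since $c(1/2,1/5) < 4/5$ and $A\ge 0$ this is at most $\frac 45 A + O(\epsilon) = \frac 45 \int_X f\, d\mu_G + O(\epsilon)$. Letting $\epsilon = \epsilon(N)\to 0$ slowly as $N\to\infty$ (a diagonal argument, since the sieve bound holds for each fixed $\epsilon$ once $N$ is large) turns $O(\epsilon)$ into $o(1)$, and undoing the normalization restores the factor $\|f\|_{\Lip}$, giving the stated inequality.

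The substance of the argument is already contained in Propositions~\ref{hecke-type-i} and~\ref{hecke-type-ii} — in particular the shifted-convolution estimate at the heart of the Type~II bound, where $f_* = f_{d_1} f_{d_2}$ is realized on $\Gamma_0(d_1 d_2)\backslash G$ and Proposition~\ref{period-bound-proposition} is applied at level $d_1 d_2$. The only delicate point in the present deduction is bookkeeping: one must confirm that, after setting $\theta = 0$, the admissible ranges $D < N^{1/2-\epsilon}$ and $D < N^{1/5-\epsilon}$ and the savings $(\log N)^{-3}$, $(\log N)^{-22}$ line up exactly with~\eqref{TypeI} and~\eqref{TypeII}, and that the dependence of $a_n$ on $N$ through $H_N$ is harmless because all implied constants in Lemma~\ref{duke-friedlander-iwaniec} are uniform in the input sequence.
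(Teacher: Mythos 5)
Your proposal is correct and follows precisely the route the paper takes, which is stated in a single sentence: with $\theta=0$, Propositions~\ref{hecke-type-i} and~\ref{hecke-type-ii} supply the Type~I and Type~II bounds at levels $\alpha=1/2$ and $\gamma=1/5$, and Lemma~\ref{duke-friedlander-iwaniec} gives the conclusion. You simply spell out the (correct and routine) bookkeeping the paper leaves implicit: the normalization $\|f\|_{\Lip}=1$, the decomposition $a_n=A+c_n$ with $f_0=f-A$, the fact that $f$ nonnegative hence real makes $\overline{c_{d_2n}}=c_{d_2n}$, and the diagonalization in $\epsilon$.
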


From this result we can deduce Theorem \ref{hecke-control} as we did in the previous section with Theorem \ref{measure-control}.

We end by discussing some improvements on the levels of distribution in Propositions \ref{hecke-type-i} and \ref{hecke-type-ii}. For the type I sums we will establish a level of $1/2$ matching what Proposition \ref{hecke-type-i} gives with $\theta=0$. For the type II sums, Proposition \ref{hecke-type-ii} with $\theta= 7/64$ yields a level of $25 / 167 = 0.1457\ldots$. We establish a level of $3/19=0.1578\ldots$ which is a small improvement but still falls short of the magic number $c$ $(\frac 16 < c < \frac 15  )$ which would make Theorem \ref{hecke-almost-equidistribution} unconditional.

\begin{proposition}\label{referee}
The result in Proposition \ref{hecke-type-i} is true for any $D<N^{1/2-\epsilon}$, for any $\epsilon>0$. 
\end{proposition} 
\begin{proof}
In what follows we assume that $f$ is orthogonal to the Eisenstein series, because for them we have $\theta=0$; moreover, for simplicity let us consider $f$ $K$-invariant. The sums in Proposition \ref{hecke-type-i} are (assuming we have a smooth sum, as we can)
\[
I   =\frac 1N  \sum_{d\asymp D} |\sum_n \eta(\frac{Dn}N) f(H_N h(dn))|.
\]
We can also assume that $f$ is orthogonal to the space of Eisenstein series (because for them we know that $\theta=0$), and then we can write the Fourier expansion 
\[
f(h(x)a(\frac 1N))=\sum_k \tilde f(k, \frac 1N) e(kx)=O(\delta)+ \sum_{k}^*  \tilde f(k, \frac 1N) e(kx)
\]
with $\delta=N^{-1/\log\log N}$ and  the sum in $k$ restricted to $|k|<\delta^{-O(1)} N$ and $(k,N)<\delta^{-O(1)}$. This is done by using the spectral expansion, the multiplicativity of $\lambda_{\pi}(k)$ and the bounds \eqref{Peter} and \eqref{bound-fourier-series-discrete}. Thus it is enough to bound
\[
I_*=\frac 1N \sum_{d\asymp D} |\sum_k^* \tilde f (k, \frac 1N) \sum_n \eta(\frac{Dn}N) e(\frac{dkn}N) |. 
\]
Applying Poisson Summation in $n$ yields
\begin{equation}
I_*\ll \frac 1D \sum_{d\asymp D} \sum_{k,\nu}^* |\tilde f(k, N^{-1})| |\widehat{\eta} (\frac{dk-\nu N}D ) | \ll \frac 1D \sum_k^* c_k |\tilde f (k,\frac 1N)|,
\end{equation}
with $c_k$ the number of $d, t\ll D$ such that $dk\equiv t \modu N$. 
For $D\le N^{\frac 12-\epsilon}$ which we assume is in force, one can check that for each $k$ as above $c_k\ll N^{\epsilon}$ as follows: if $t=0$ there are no solutions for the congruence since $(k,N)<\delta^{-O(1)}$; otherwise, for any pair of solutions $(d_1,t_1)$ and $(d_2,t_2)$ we have that $d_1 t_2\equiv d_2 t_1 \modu N$, so that $d_1 t_2 =d_2 t_1$. This means that any solution is of the form $(d,t)=\lambda (d_*, t_*)$, for some $(d_*,t_*)$ fixed; since $\lambda$ divides $N$ we are done.
On the other hand, we have $\sum_k^* c_k \ll_{\epsilon} D^2 N^{\epsilon}$, so that $\sum_k^* c_k^m \ll_{\epsilon, m} D^{2}N^{\epsilon}$.

From (\ref{period-spectral-expression}) and noting that $q=1$ we have 
\[
|\tilde f (k, N^{-1}) \ll_{\epsilon} \delta + N^{-\frac 12+\epsilon} \sum_{|t_j|\ll \delta^{-O(1)}} |\lambda_j (k)|
\]
and hence
\begin{equation}
I_*\ll \delta+ \frac{N^{-\frac 12 +\epsilon}}D \sum_j \sum_{k}^* c_k|\lambda_j (k)| \ll \frac{N^{-\frac 12+\epsilon}}D \sum_{j} (D^{2}N^{\epsilon})^{\frac 34}  (\sum_{k}^* |\lambda_j (k)|^4 )^{\frac 14} .
\end{equation}
It is known (see \cite{kim-sarnak} and \cite{li}) that for $x\ge 1$, 
\begin{equation}\label{fourth-moment}
\sum_{|k|\le x} |\lambda_j(k)|^4 \ll_{\epsilon} \lambda_j^{\epsilon} x,
\end{equation}
hence
\begin{equation}
I_*\ll \delta+\delta^{-O(1)}\frac{N^{-\frac 12 +\epsilon}}D N^{\frac 14} D^{\frac 32} = \delta +\delta^{-O(1)}N^{-\frac 14+\epsilon } D^{\frac 12}.
\end{equation}
This gives a level of distribution of $1/2$ for these type I sums.
\end{proof}
\begin{proposition}
The result in Proposition \ref{hecke-type-ii} is true in the range for $N^{(\log\log N)^{-3}}<D<N^{3/19-\epsilon}$ for any $\epsilon>0$.
\end{proposition} 
\begin{proof}
 We continue with the setting described in the proof of Proposition \ref{referee}. For the type II sums $y^{-1}=ND$, $q=D^2$ and we could try to use the improvement in Proposition \ref{period-bound-proposition} in the range $\sqrt q y^{-\theta} > y^{-\frac 14}$, i.e. $D(ND)^{\theta}>(ND)^{\frac 14}$ or $D>N^{\frac{1-4\theta}{3+4\theta}}$. For $\theta=7/64$ this is $D>N^{9/55}$. However $\frac 9{55}> \frac{25}{167}$ so that the improvement in this range does not give an improvement of the level $25/167$. Instead we again exploit the average over $k$ and again we use \cite{li}; for $x\ge 1$
\begin{equation}\label{eight-moment}
\sum_{|m|\le x} |\lambda_j (m) |^8 \ll_{\epsilon} (\lambda_j q)^{\epsilon} x.
\end{equation}
For $d_1, d_2 \asymp D$  
\begin{align}
II & = \frac DN \sum_n \eta(\frac{Dn}N) f_1(H_N h(d_1 n)) f_2(H_N h(d_2 n))   \\
& = \frac DN \sum_n \eta(\frac{Dn}N) f_* (\frac nN, \frac 1{ND} ) \notag \\
& \ll D^{-\epsilon}+ \sum_{|k|\ll ND^{1+O(\epsilon)}} \widetilde{f_*} (k, (ND)^{-1}) \sum_{|\nu|\ll D^{1+\epsilon}} \widehat{\eta} (\frac{k-\nu N}D) \notag  \\
& \ll D^{-\epsilon}+ \sum_{|t|, |\nu|\le D^{1+O(\epsilon)}} |\widetilde{f_*} (t+\nu N, (ND)^{-1})|. \notag
\end{align}
From (\ref{period-spectral-expression}) with $|k|\ll ND^{1+\epsilon}$ and for $k=0$ due to (\ref{k-zero}) we have that
\[
|\widetilde{f_*}(k, (ND)^{-1})|\ll D^{-\epsilon}+ \frac{(ND)^{-\frac 12}}{\sqrt q}\sum_{|t_j|\ll D^{O(\epsilon)} }|\lambda_j(k)| |\langle f_*, \phi_j\rangle |.
\]
Hence
\begin{equation}\label{average-periods}
\sum_{|t|,|\nu |\le  D^{1+\epsilon}} |\widetilde{f_*} (t+\nu N, (ND)^{-1}) | \le   \frac{(ND)^{-\frac 12}}{\sqrt q}\sum_{|t_j|\ll D^{\epsilon}} |\langle f_*,\phi_j \rangle | \sum_{|t|, |\nu |\le D^{1+\epsilon}} |\lambda_j (t+\nu N) |.
\end{equation}
The inner sum may be estimated by Holder,
\[
\sum_{|t|,|\nu |\le D^{1+\epsilon}} |\lambda_j (t+\nu N) | \le (\sum_{|m|\le ND^{1+\epsilon}} |\lambda_j (m) |^8 )^{\frac 18} \, (D^{2+2\epsilon} )^{\frac 78}
\]
which by (\ref{eight-moment}) is
\begin{equation}
\ll \lambda_j^{\epsilon} (ND)^{\frac 18} D^{\frac 74}D^{O(\epsilon)}.
\end{equation}
Substituting this into (\ref{average-periods}) gives (recall that $q=D^2$) 
\begin{align}
II & \ll D^{-\epsilon}+ \frac{(ND)^{-\frac 12 +\epsilon}}{\sqrt q} (ND)^{\frac 18} D^{\frac 74} (\sum_{|t_j|\ll D^{\epsilon}} |\langle f_*, \phi_j\rangle |^2 )^{\frac 12} \, q^{\frac 12} \\
& \ll D^{-\epsilon}+(ND)^{-\frac 12} (ND)^{\frac 18} D^{\frac 74} (D^2)^{\frac 12} N^{O(\epsilon)} =D^{-\epsilon}+N^{-\frac 38} D^{\frac{19}8} N^{O(\epsilon)} \notag.
\end{align}
This gives a level of distribution of $3/19$ for the type II sums.
\end{proof}

\

\begin{center}
\textsc{Acknowledgments} 
\end{center}
We would like to thank N. Pitt for interesting discussions about his paper \cite{pi}. We also acknowledge the referees of an earlier version of this paper for pointing to some errors that needed correcting.

A. Ubis was supported by a Postdoctoral Fellowship from the Spanish Government during part of the writing of the paper; he also thanks both the Department of Mathematics of Princeton University and the Institute for Advanced Study for providing excellent working conditions. P. Sarnak and A. Ubis were supported in part by NSF grants, and A. Ubis by a MINCYT grant.

\end{document}